\renewcommand{\emptyset}{\varnothing}
\newcommand{\NN}{\mathbb N}
\newcommand{\QQ}{\mathbb Q}
\newcommand{\RR}{\mathbb R}
\newcommand{\ZZ}{\mathbb Z}
\theoremstyle{definition}
\newtheorem{thm}{Theorem}[section]
\newtheorem{cor}[thm]{Corollary}
\newtheorem{lem}[thm]{Lemma}
\newtheorem{clm}[thm]{Claim}
\newtheorem{prop}[thm]{Proposition}
\newtheorem{defn}[thm]{Definition}
\newtheorem{eg}[thm]{Example}
\newtheorem{rem}[thm]{Remark}
\newtheorem{obs}[thm]{Observation}
\newtheorem{question}[thm]{Question}
	\newtheorem{fact}[thm]{Fact}
\numberwithin{equation}{section}
\newcommand{\indexedforests}[1][]{ 
{\ifx&#1&%
    \operatorname{\mathsf{For}}
\else
    \operatorname{\mathsf{For}}^{#1}
\fi}
}
\newcommand{\forestpoly}[2][]{{\mathfrak{P}_{#2}^{\underline{#1}}}} 
\newcommand{\oneforestpoly}[1]{\mathfrak{P}_{#1}} 
\newcommand{\qsym}[2][]{
{\ifx&#1&%
  {\operatorname{QSym}_{#2}}
\else
  {{}^{#1}\!\operatorname{QSym}_{#2}}
\fi}
} 
\newcommand{\qseq}[2][]{
{\ifx&#1&%
  {\operatorname{QSeq}_{#2}}
\else
  {{}^{#1}\!\operatorname{QSeq}_{#2}}
\fi}
}
\newcommand{\qsymide}[2][]{
{\ifx&#1&%
  {\operatorname{QSym}_{#2}^+}
\else
  {{}^{#1}\!\operatorname{QSym}_{#2}^+}
\fi}
} 
\newcommand{\sym}[1]{\operatorname{Sym}_{#1}} 
\newcommand{\symide}[1]{\sym{#1}^+} 
\newcommand{\lcode}[1]{\operatorname{lcode}(#1)} 
\newcommand{\supp}{\operatorname{supp}} 
\newcommand{\compatible}[2][]{
{\ifx&#1&%
  {\mathcal{C}(#2)}
\else
  {\mathcal{C}^{m}(#2)}
\fi}
} 
\newcommand{\internal}[1]{\operatorname{IN}(#1)} 
\newcommand{\suchthat}{\;|\;}
\newcommand{\im}[1]{\operatorname{Im}(#1)} 
\newcommand{\nvect}{\mathsf{Codes}}
\newcommand{\grass}[1]{\mathsf{Grass_{#1}}} 
\newcommand{\poly}{\operatorname{Pol}} 
\newcommand{\schub}[1]{\mathfrak{S}_{#1}} 
\newcommand{\red}[1]{\operatorname{Red}(#1)} 
\newcommand{\des}[1]{\operatorname{Des}(#1)} 
\date{}
\newcommand{\cat}[1]{\operatorname{Cat}_{#1}} 
\newcommand{\idem}{\operatorname{id}} 
\newcommand{\slide}[2][]{
{\ifx&#1&%
  {\mathfrak{F}_{#2}}
\else
  {\mathfrak{F}_{#2}^{\underline{#1}}}
\fi}
} 
\newcommand{\ct}{\operatorname{ev_0}} 
\newcommand{\qdes}[2][]{\operatorname{LTer}_{#1}(#2)} 
\newcommand{\tope}[2][]{
{\ifx&#1&%
  {\mathsf{T}_{#2}}
\else
  {\mathsf{T}_{#2}^{\underline{#1}}}
\fi}
} 
\newcommand{\bope}[2][]{
{\ifx&#1&%
  {\mathsf{B}_{#2}}
\else
  {\mathsf{B}_{#2}^{(#1)}}
\fi}
} 
\newcommand{\rope}[1]{\mathsf{R}_{#1}} 
\newcommand{\End}{\operatorname{End}} 
\newcommand{\Hom}{\operatorname{Hom}} 
\newcommand{\Trim}[1]{\operatorname{Trim}({#1})} 
\newcommand{\Th}[1][]{\mathsf{ThMon}^{\underline{#1}}} 
\newcommand{\sfx}{\mathsf{x}} 
\newcommand{\zigzag}[2][]
{
{\ifx&#1&%
  {\mathsf{ZigZag}_{#2}}
\else
  {\mathsf{ZigZag}_{#2}^{#1}}
\fi}
}
\newcommand{\ltfor}[2][] 
{
{\ifx&#1&%
  {\mathsf{LTFor}_{#2}}
\else
  {\mathsf{LTFor}_{#2}^{#1}}
\fi}
}
\newcommand{\rtfor}[2][] 
{
{\ifx&#1&%
  {\mathsf{RTFor}_{>#2}}
\else
  {\mathsf{RTFor}_{>#2}^{#1}}
\fi}
}
\newcommand{\suppfor}[2][] 
{
{\ifx&#1&%
  {\mathsf{For}_{#2}}
\else
  {\mathsf{For}_{#2}^{#1}}
\fi}
}
\newcommand{\binfor}[1][]{
{\ifx&#1&%
    \operatorname{\mathsf{BinFor}}
\else
    \operatorname{\mathsf{BinFor}}^{#1}
\fi}
} 
\newcommand{\Bin}[1]{\widehat{#1}}
\newcommand{\hqsym}[2][]{
{\ifx&#1&%
  {\operatorname{HQSym}_{#2}}
\else
  {\operatorname{HQSym}_{#2}^{#1}}
\fi}
} 
\newcommand{\coinv}[1]{\operatorname{Coinv}_{#1}} 
\newcommand{\qscoinv}[2][]{
{\ifx&#1&%
  {\operatorname{QSCoinv}_{#2}}
\else
  {{}^{#1}\!\operatorname{QSCoinv}_{#2}}
\fi}
}
\newcommand{\hsym}[1]{\operatorname{HSym}_{#1}} 
\newcommand{\cube}{\mathbf{C}} 
\newcommand{\vol}{\operatorname{Vol}} 
\definecolor{ao}{rgb}{0.0, 0.5, 0.0}
\newcommand{\col}{\overline{\rho}_F} 
\newcommand{\deri}[1]{\operatorname{D}_{#1}} 
\newcommand{\vope}[2][]{
{\ifx&#1&%
  {\mathsf{V}_{#2}}
\else
  {\mathsf{V}_{#2}^{\underline{#1}}}
\fi}
} 
\newcommand{\paths}[1]{\operatorname{Paths}(#1)}
\newcommand{\sfc}{\mathsf{c}}
\newcommand{\sfd}{\mathsf{d}}
\author{Philippe Nadeau}
\address{Universite Claude Bernard Lyon 1, CNRS, Ecole Centrale de Lyon, INSA Lyon, Université Jean Monnet, ICJ UMR5208, 69622 Villeurbanne, France}
\email{\href{mailto:nadeau@math.univ-lyon1.fr}{nadeau@math.univ-lyon1.fr}}
\author{Hunter Spink}
\address{Department of Mathematics,
University of Toronto, Toronto, ON M5S 2E4, Canada}
\email{\href{mailto:hunter.spink@utoronto.ca}{hunter.spink@utoronto.ca}}
\author{Vasu Tewari}
\address{Department of Mathematical and Computational Sciences, University of Toronto Mississauga, Mississauga, ON L5L 1C6, Canada}
\email{\href{mailto:vasu.tewari@utoronto.ca}{vasu.tewari@utoronto.ca}}
\thanks{
PN was partially supported by French ANR grant ANR-19-CE48-0011 (COMBIN\'E). HS and VT acknowledge the support of the Natural Sciences and Engineering Research Council of Canada (NSERC), respectively [RGPIN-2024-04181] and [RGPIN-2024-05433].}
\keywords{Divided difference operators, harmonics, quasisymmetric polynomials, quasisymmetric coinvariants, volume polynomials}
\begin{document}
\title{Quasisymmetric divided differences}

\begin{abstract}
We develop a quasisymmetric analogue of the combinatorial theory of Schubert polynomials and the associated divided difference operators. Our counterparts are ``forest polynomials'', and a new family of  linear operators, whose theory of compositions is governed by forests and the ``Thompson monoid''. 

We then give several applications of our theory to fundamental quasisymmetric polynomials, the study of quasisymmetric coinvariant rings and their associated harmonics, and positivity results for various expansions. In particular we resolve a conjecture of Aval--Bergeron--Li regarding quasisymmetric harmonics.  Our approach extends naturally to $m$-colored quasisymmetric polynomials. 
\end{abstract}

\maketitle

\setcounter{tocdepth}{1}
\tableofcontents

\section{Introduction}
The ring of quasisymmetric functions $\qsym{}$, first introduced in Stanley's thesis \cite{StThesis} and further developed by Gessel \cite{Ges84}, is ubiquitous throughout combinatorics; see \cite{ABS06} for a high-level explanation and \cite{grinberg2020hopf} for thorough exposition.
Truncating to finitely many variables $\{x_1,\dots,x_n\}$ gives the ring of quasisymmetric polynomials $\qsym{n}$. 
The quasisymmetric polynomials are characterized by a weaker form of variable symmetry, and so contain the ring of symmetric polynomials $\sym{n}$.

Letting $\symide{n}$ denote the ideal in $\poly_{n}\coloneqq \ZZ[x_1,\dots,x_n]$ generated by positive degree homogeneous symmetric polynomials, the coinvariant algebra $\coinv{n}\coloneqq \poly_{n}/\symide{n}$ has been a central object of study for the past several decades. 
An important reason for this is its distinguished basis of Schubert polynomials \cite{LS82} and the divided difference operators \cite{BGG73} that interact nicely with this family-- see \cite{BS98, BJS93, BKTY04, FK96, FS94, HPW22, KM05, LeSo03} for a sampling of the combinatorics underlying this story.
In fact Schubert polynomials lift to a basis  of $\poly_n$.
The close relationship between the  combinatorics of symmetric and quasisymmetric polynomials leads to the natural question, first posed in \cite{ABB04}, of what can be said about the analogous quotient $\qscoinv{n}\coloneqq \poly_n/\qsymide{n}$, where $\qsymide{n}$ is the ideal generated by positive degree homogeneous quasisymmetric polynomials?

\smallskip

In this paper we develop a quasisymmetric analogue of the \textbf{combinatorial theory} of Schubert polynomials $\schub{w}$ and the divided differences $\partial_i$ which recursively generate them. 
The reader well-versed with the classical story should refer to Table~\ref{tab:sym_vs_qsym} for a comparison.
The role of Schubert polynomials $\schub{w}$ is played by the \emph{forest polynomials} $\oneforestpoly{F}$ of \cite{NT_forest}, and the role of the $\partial_i$ operators are played by certain new \emph{trimming operators} $\tope{i}$.
Just as Schubert polynomials generalize Schur polynomials, the forest polynomials generalize fundamental quasisymmetric polynomials, a distinguished basis of $\qsym{n}$.
The duality between compositions of trimming operators and forest polynomials allows us to expand any polynomial in the basis of forest polynomials.
In fact, a special case of our framework gives a remarkably simple method for directly extracting the coefficients of the expansion of a quasisymmetric polynomial in the basis of fundamental quasisymmetric polynomials.

The interaction between forest polynomials and trimming operators descends nicely to quotients by $\qsymide{n}$, and we thus obtain a basis comprising certain forest polynomials for $\qscoinv{n}$ as well.
Our techniques are robust enough to gain a complete understanding even in the case one quotients by homogeneous quasisymmetric polynomials of degree at least $k$ for any $k\geq 1$. By considering the adjoint operator to trimming under a natural pairing on the polynomial ring, we are able to easily construct $\qsymide{n}$-harmonics, which turn out to have a basis given by  the volume polynomials of certain polytopes, answering a question of Aval--Bergeron--Li \cite{ABL10}.

\smallskip

In \cite{ NST_3} we investigate the underlying \textbf{geometric theory}, drawing upon the geometric significance of the ordinary divided difference operator. 
We now proceed to a more detailed description of background as well as results.

\subsection*{Background and results}
Let us briefly recall the classical theory of symmetric and Schubert polynomials. Let $S_{\infty}$ be the permutations of $\NN=\{1,2,\ldots\}$ fixing all but finitely many elements, generated by the adjacent transpositions $s_i\coloneqq (i,i+1)$, and identify $S_n$, the permutations of $[n]\coloneqq \{1,\ldots,n\}$, with the subgroup $\langle s_1,\ldots,s_{n-1}\rangle$ fixing all $i\ge n+1$. Let $\poly_n\coloneqq \ZZ[x_1,\ldots,x_n]$, and denote by $\poly\coloneqq \bigcup_n \poly_n =\ZZ[x_1,x_2,\ldots]$ for the ring of polynomials in infinitely many variables. $S_n$ acts on $\poly_n$ by permuting variable subscripts, and we denote by $\sym{n}\subset \poly_n$ for the invariant subring of symmetric polynomials. Two of the most important tools for understanding $\poly_n$ as a $\sym{n}$-module are the $\ZZ$-basis of $\poly$ given by the \emph{Schubert polynomials} $\schub{w}$ of Lascoux--Sch\"utzenberger \cite{LS82}, and the \emph{divided difference} operators $\partial_i:\poly\to \poly$ given by \begin{align}
\label{eqn:partiali}
\partial_i(f)=\frac{f-s_i f}{x_i-x_{i+1}}
\end{align} 
where $s_i$ swaps $x_i,x_{i+1}$. 
They are related by the fact that Schubert polynomials are the unique family of homogeneous polynomials indexed by $w\in S_{\infty}$ such that $\schub{\idem}=1$, and denoting $\des{w}=\{i:w(i)>w(i+1)\}$ for the descent set of $w$ we have
\begin{align}
\label{eqn:partiali_on_schubs}
\partial_i\schub{w}=\begin{cases}\schub{ws_i}&\text{ if }i\in \des{w},\\0&\text{otherwise.}\end{cases}
\end{align}
The divided differences satisfy the relations $\partial_i^2=0$, $\partial_i\partial_{i+1}\partial_i=\partial_{i+1}\partial_i\partial_{i+1}$ and $\partial_i\partial_j=\partial_j\partial_i$ for $|i-j|\ge 2$. 
The monoid defined by this presentation is the \emph{nilCoxeter monoid}.  
These relations imply that $\partial_{i_1}\cdots \partial_{i_k}=0$ if $s_{i_1}\cdots s_{i_k}$ is not a reduced word, and we may define $\partial_w\coloneqq \partial_{i_1}\cdots \partial_{i_k}$ where $s_{i_1}\cdots s_{i_k}$ is any reduced word for $w$. 
The operators $\{\partial_w\suchthat w\in S_{\infty}\}$ are the nonzero composites of the $\partial_i$, and if we let $\ct f=f(0,0,\ldots)$ denote the constant term map, then  $\partial_w$ and $\schub{w}$ satisfy the~duality
\begin{align*}
\ct \partial_w\schub{w'}=\delta_{w,w'}.
\end{align*}
The following are a representative sampling of classical results concerning the relationship between $\sym{n}$ and $\poly_n$ which are solved by Schubert polynomials and divided differences.
\begin{enumerate}[align=parleft,left=0pt,label=(Fact \arabic*)]
    \item \label{intro:it1} (cf. \cite{BGG73,LS82}) The Schubert polynomials 
    \begin{itemize}
        \item $\{\schub{w}\suchthat \des{w}\subset [n]\}$ are a $\ZZ$-basis of $\poly_n$,
        \item $\{\schub{w}\suchthat w\not\in S_n \text{ and } \des{w}\subset [n]\}$ are a $\ZZ$-basis for $\symide{n}\subset \poly_n$, the ideal generated by positive degree homogeneous symmetric polynomials, and
        \item $\{\schub{w}:w\in S_n\}$ are a $\ZZ$-basis for the coinvariant algebra $\coinv{n}\coloneqq \poly_n/\symide{n}$.
    \end{itemize}
    \item \label{intro:it2} (cf. \cite{Man01})) The nil-Hecke algebra $\End_{\sym{n}}(\poly_n)$ of endomorphisms $\phi:\poly_n\to \poly_n$ such that $\phi(fg)=f\phi(g)$ whenever $f\in \sym{n}$, is generated as a noncommutative algebra by the divided differences $\partial_1,\ldots,\partial_{n-1}$ and (multiplication by) $x_1,\ldots,x_n$.
    \item \label{intro:it3} (cf. \cite{BGG73,St64}) The $S_n$-harmonics $\hsym{n}$, defined as the set of polynomials $f\in \QQ[\lambda_1,\ldots,\lambda_n]$ such that $g(\frac{d}{d\lambda_1},\ldots,\frac{d}{d\lambda_n})f=0$ whenever $g\in \sym{n}$ is homogeneous of positive degree, has a basis given by the ``degree polynomials'' $\schub{w}(\frac{d}{d\lambda_1},\ldots,\frac{d}{d\lambda_n})\prod_{i<j}(\lambda_i-\lambda_j)$ for $w\in S_n$.
\end{enumerate}

A research program \cite{ABB04, BeGa23, PeSa23,  PeSa22} that has garnered attention in recent years revolves around answering the following question, which is the focus of this article.
\begin{question}
    How do such results generalize to the \emph{quasisymmetric polynomials} $\qsym{n}\subset \poly_n$?
\end{question}

Recall that $f\in\qsym{n}$ if for any sequence $a_1,\ldots,a_k\ge 1$, the coefficients of $x_{i_1}^{a_1}\cdots x_{i_k}^{a_k}$ and $x_{j_1}^{a_1}\cdots x_{j_k}^{a_k}$ in $f$ are equal whenever $1\le i_1<\cdots < i_k \le n$ and $1\le j_1<\cdots < j_k \le n$.
Concretely, just as the ring of symmetric polynomials $\sym{n}\subset \poly_n$ are invariant under the natural action of the symmetric group $S_n$ permuting variable indices, the quasisymmetric polynomials $\qsym{n}$ are the ring of invariants under the \emph{quasisymmetrizing} action of $S_{n}$ on $\poly_n$ due to Hivert \cite{Hi00} where the transposition $(i,i+1)$ acts on monomials $\sfx^{\sfc}\coloneqq x_1^{c_1}\cdots x_n^{c_n}$ by
\begin{align}
\label{eqn:hivert1}
\bm\sigma_i\,\sfx^{\mathsf{c}}=\begin{cases}s_{i}\cdot \sfx^{\sfc} &\text{if }c_i=0\text{ or }c_{i+1}=0,\\ \sfx^{\sfc}&\text{otherwise.}
\end{cases}
\end{align}
Under this action, the orbit of $\sfx^\sfc$ is the set of $\sfx^{\sfc'}$ where the ordered sequence of nonzero entries of $\sfc'$ is the same as for $\sfc$, so e.g.  $x_1^3x_2+x_1^3x_3+x_2^3x_3\in \qsym{3}$.

Pursuing this parallel further, Aval--Bergeron--Bergeron \cite{ABB04} studied the \emph{quasisymmetric coinvariants} $\qscoinv{n}\coloneqq \ZZ[x_1,\dots,x_n]/\qsymide{n}$ and produced a basis of monomials indexed by Dyck paths which in particular implies that the dimension of this space is given by the $n$th Catalan number $\cat{n}$.
Subsequent work of Aval \cite{Av0507} and Aval--Chapoton \cite{AvCh1618} generalized these results to a variant of quasisymmetric polynomials $\qsym[m]{n}$ in several sets of equisized variables called $m$-quasisymmetric polynomials. On the other hand, in \cite{Hi00} an isobaric quasisymmetric divided difference $\frac{x_{i+1}f-x_{i}\bm\sigma_i f}{x_{i+1}-x_i}$ was studied, which was obtained by replacing $s_i$ with $\bm\sigma_i$ in the usual isobaric divided difference $\frac{x_{i+1}f-x_is_i\cdot f}{x_{i+1}-x_i}$ used to define Grothendieck polynomials. Unfortunately, the operators obtained by replacing $s_i$ with $\bm\sigma_i$ in the definition of $\partial_i$ do not appear to behave well under composition, nor do they descend to $\qscoinv{n}$ (unlike $\partial_1,\ldots,\partial_{n-1}\in \End_{\sym{n}}(\poly_n)$ which descend to endomorphisms of $\coinv{n}$).

We introduce a ``quasisymmetric divided difference formalism''\footnote{Unrelated to the similarly named ``quasisymmetric Schubert calculus'' of \cite{PeSa22}.} built around linear \textit{trimming} operators $\tope{i}:\poly\to \poly$ satisfying the relations
\begin{align*}
\tope{i}\tope{j}=\tope{j}\tope{i+1} \text{ for } i>j
\end{align*}
of the (positive) \emph{Thompson monoid} \cite{Sunic07}, implying that composite operators $\tope{F}$ are indexed by \emph{binary indexed forests} $F$ \cite[\S 3.1]{NT_forest} (see also \cite{BelkBrown05}).
Just as $\ker(\partial_1|_{\poly_n})\cap \cdots \cap \ker(\partial_{n-1}|_{\poly_n})=\sym{n}$, we have $\ker(\tope{1}|_{\poly_n})\cap \cdots \cap \ker(\tope{n-1}|_{\poly_{n}})=\qsym{n}$, justifying the name, and they descend to operators $\tope{1},\ldots,\tope{n-1}:\qscoinv{n}\to \qscoinv{n-1}$. 
We will  see that they interact with the family of forest polynomials $\oneforestpoly{F}$ \cite{NT_forest} analogously to how $\partial_i$ interacts with $\schub{w}$ with the role of $ws_i$ being played by a certain ``trimmed forest'' $F/i$, allowing us to tightly follow the classical theory to obtain analogues of all of the above results.
In particular, we resolve the following question.
\begin{question}[Aval--Bergeron--Li \cite{ABL10}] For $\hqsym{n}$ the analogously defined ``quasisymmetric harmonics'', find a combinatorially defined basis and show that every element of $\hqsym{n}$ is in the span of the partial derivatives of the degree $n-1$ quasisymmetric harmonics.
\label{intro:ABL10}
\end{question}

We will also state $\qsym[m]{n}$-analogues of all of the above results. For each $m$ we will define trimming operators $\tope[m]{i}$ satisfying the relations 
\begin{align*}
\tope[m]{i}\,\tope[m]{j}=\tope[m]{j}\,\tope[m]{i+m}\text{ for $i>j$}
\end{align*} of the $m$-Thompson monoid $\Th[m]$, whose compositions $\tope[m]{F}$ are indexed by \emph{$(m+1)$-ary indexed forests} $F\in \indexedforests[m]$. They interact analogously with a new family of ``$m$-forest polynomials'' $\{\forestpoly{F}: F\in \indexedforests[m]\}$ which when $m=1$ specialize to the aforementioned forest polynomials of \cite{NT_forest}, and when $m\to \infty$ become the monomial basis.

\subsection*{Outline of article}
See~\Cref{tab:sym_vs_qsym} for a quick overview of where the constructions and results analogous to the theory of Schubert polynomials appear in this paper.
\begin{table}
    \centering
    \begin{adjustbox}{max width=\textwidth}
    \renewcommand{\arraystretch}{1.2}
    \begin{tabular}{|c|c|c|c|}
    \hline
        $\mathsection$&& $\qsym{n}$  & $\sym{n}$\\
        \hline
        \ref{sec:quasisymmetric_polynomials}&\textbf{Divided differences} & $\tope{i}$ &  $\partial_i$  \\
        \hline
         \ref{sec:indexed_forest}&\textbf{Indexing combinatorics} & $F\in \indexedforests$ & $w\in S_\infty$\\
         & &Fully supported forests $\suppfor{n}$ &$S_n$\\
         &  &Forest code $\sfc(F)$ &Lehmer code $\lcode{w}$\\
         & &Left terminal set $\qdes{F}$ &Descent set $\des{w}$\\
         &  & $F/i$ for $i\in \qdes{F}$ &$ws_i$ for $i\in \des{w}$\\
         & &Trimming sequences $\Trim{F}$ & Reduced words $\red{w}$\\
         & &Zigzag forests $Z\in\zigzag{n}$ & Grassmannian permutations $\lambda$\\
         \hline
         \ref{sec:thompson}&\textbf{Monoid}&Thompson monoid &nilCoxeter monoid\\
         \hline
         \ref{sec:forest_polynomials} & \textbf{$\poly$-basis}  & Forest polynomials $\oneforestpoly{F}$ & Schuberts $\schub{w}$\\
         &\textbf{Composites} &$\tope{F}=\tope{i_1}\cdots \tope{i_k}$ for $\textbf{i}\in \Trim{F}$ &$\partial_w=\partial_{i_1}\cdots \partial_{i_k}$ for $\textbf{i}\in \red{w}$\\
         \hline
         \ref{sec:forests_as_analogues_of_schuberts}&\textbf{$\poly_n$-basis}  & $\{\oneforestpoly{F}\suchthat \qdes{F}\subset [n]\}$ & $\{\schub{w}\suchthat \des{w}\subset [n]\}$\\
         &\textbf{Duality} &$\ct \tope{F}\oneforestpoly{G}=\delta_{F,G}$ & $\ct \partial_w\schub{w'}=\delta_{w,w'}$\\
         \hline
         \ref{sec:positivity}&\textbf{Positive expansions} &$\oneforestpoly{F}\oneforestpoly{H}=\sum c^G_{F,H}\oneforestpoly{G}$,  $c^G_{F,H}\ge 0$ & $\schub{u}\schub{w}=\sum c^v_{u,w}\schub{v}$, $c^v_{u,w}\ge 0$\\
         \hline
        \ref{sec:fundamental}&\textbf{Invariant basis} & Fundamental qsyms $\oneforestpoly{Z}$ & Schur polynomials $s_\lambda$\\
        \hline
         \ref{sec:coinvs}&\textbf{Coinvariant basis}  & $\{\oneforestpoly{F}\suchthat F\in \suppfor{n}\}$ &$\{\schub{w}\suchthat w\in S_n\}$\\
         &\textbf{Coinvariant action}&$\tope{i}:\qscoinv{n}\to \qscoinv{n-1}$ &$\partial_i:\coinv{n}\to \coinv{n}$\\
         \hline
         \ref{sec:harmonics}&\textbf{Harmonic basis} & Forest volume polynomials & Degree polynomials\\
        \hline
    \end{tabular}
    \renewcommand{\arraystretch}{1}
    \end{adjustbox}
    \caption{Comparing the symmetric and quasisymmetric stories}
    \label{tab:sym_vs_qsym}
\end{table}
In \Cref{sec:quasisymmetric_polynomials} we introduce operators $\rope{i}$ and $\tope{i}$ which can be used to characterize quasisymmetry. 
In \Cref{sec:indexed_forest} we describe the combinatorics of certain binary forests $\indexedforests$. 
In \Cref{sec:thompson} we show that the compositional structure on $\indexedforests$ is given by the ``Thompson monoid''. In \Cref{sec:forest_polynomials} we define the forest polynomials $\forestpoly{F}$ for $F\in \indexedforests$ and show that the $\tope{i}$ operators give a representation of the Thompson monoid, implying their composites $\tope{F}$ are also indexed by $F\in \indexedforests$. 

In \Cref{sec:forests_as_analogues_of_schuberts} we show that $\tope{i}$ interacts with the forest polynomials $\oneforestpoly{F}$, which then leads to a number of spanning and independence properties for the forest polynomials. In particular, we show how to extract individual coefficients in forest polynomial expansions. 
In \Cref{sec:positivity} we show a number of positivity results concerning these expansions.
In \Cref{sec:fundamental} we show that the fundamental quasisymmetric polynomials are a subset of the forest polynomials, and use this to derive a simple formula for the fundamental quasisymmetric expansion of an arbitrary $f\in \qsym{n}$.
In \Cref{sec:coinvs} we show how the quasisymmetric divided difference formalism implies the analogue of \ref{intro:it1}.
In \Cref{sec:harmonics} we show the quasisymmetric analogue of \ref{intro:it3}, and resolve \Cref{intro:ABL10}.
As for \ref{intro:it2}, we study its quasisymmetric analogue in Section~\ref{sec:quasisymmetric_nil_hecke}.

In \Cref{sec:mQuasi} we describe a single-alphabet approach to the set $\qsym[m]{n}$ of $m$-quasisymmetric polynomials. 
We introduce operators $\tope[m]{i}$ which can be used to characterize them. From this we get analogues of essentially all results above. 
In \Cref{sec:ProofThatTrimsWork} we combinatorially prove the interaction between $\tope[m]{i}$ and $\forestpoly[m]{F}$. In \Cref{sec:table} we give a table of forest polynomials up to $4$ internal nodes.

\subsection*{Acknowledgements}
We would like to thank Dave Anderson, Nantel Bergeron, Lucas Gagnon, Darij Grinberg, Allen Knutson, Cristian Lenart, Oliver Pechenik, Linus Setiabrata, and Frank Sottile for several stimulating conversations/correspondence.

\section{Quasisymmetric polynomials}
\label{sec:quasisymmetric_polynomials}

Let $\nvect$ denote the set of all sequences $(c_i)_{i\in \NN}$ of nonnegative integers with \emph{finite support}, i.e. there are only finitely many nonzero $c_i$.
Given $\mathsf{c}\in \nvect$ we let
\begin{align*}
    \sfx^{\mathsf{c}}\coloneqq \prod_{i\geq 1}x_i^{c_i}.
\end{align*}

The ring $\qsym{n}$ of quasisymmetric polynomials was recalled in the introduction. Note that the defining condition on the monomials whose coefficients must be equal can be rephrased as: the coefficients of $\sfx^{\mathsf{c}}$ and $\sfx^{\mathsf{c'}}$ are equal if $\mathsf{c'}$ can be obtained from $\mathsf{c}$ by adding  or removing consecutive strings of zeros in $\mathsf{c}$. 
This essentially shows the following result due to Hivert, based on his quasisymmetrizing action~\eqref{eqn:hivert1}.

\begin{lem}[{\cite[Proposition 3.15]{Hi00}}]
\label{lem:sigmaqsymtest}
    $f(x_1,\ldots,x_n)\in \poly_n$ is quasisymmetric if and only if $f=\bm\sigma_1f=\cdots = \bm \sigma_{n-1}f$.
\end{lem}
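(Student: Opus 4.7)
The plan is to work monomial-by-monomial. Writing $[\sfx^{\sfc}]f$ for the coefficient of $\sfx^{\sfc}$ in $f$, \Cref{def:m_quasis} rephrases as: $f\in \qsym[m]{n}$ iff $[\sfx^{\sfc}]f=[\sfx^{\sfc'}]f$ for every pair of compositions $\sfc,\sfc'\in\nvect$ supported in $[1,n]$ whose nonzero entries (read in order) agree in value and have matching positions modulo $m$; call such a pair \emph{$m$-equivalent}. A direct unpacking of \Cref{def:general_hivert} shows that $\bm\sigma_i^{\underline{m}}$ is a linear involution of the monomial basis that either fixes $\sfx^{\sfc}$ or transposes it with a single monomial $\sfx^{\sfc''}$, and in the latter case the pair $(\sfc,\sfc'')$ is $m$-equivalent (they differ only by moving one nonzero entry between positions $i$ and $i+m$, which share a residue mod $m$). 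Hence the invariance $\bm\sigma_i^{\underline{m}}f=f$ is equivalent to coefficient equality on each such transposed pair, and the forward direction $(\Rightarrow)$ follows immediately.

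For the converse it suffices to show that the equivalence relation on compositions supported in $[1,n]$ generated by the transpositions of the $\bm\sigma_i^{\underline{m}}$ with $1\le i\le n-m$ coincides with $m$-equivalence. I would do this by introducing a canonical representative of each $m$-equivalence class and reducing every member to it. Given $\sfc$ with nonzero positions $p_1<\cdots<p_k$, residues $r_\ell=p_\ell\bmod m$, and values $a_\ell=c_{p_\ell}$, define $\sfc^\star$ by greedy right-to-left placement: let $p_k^\star$ be the largest element of $[1,n]$ congruent to $r_k$, and for $\ell<k$ let $p_\ell^\star$ be the largest integer less than $p_{\ell+1}^\star$ congruent to $r_\ell$, with the value $a_\ell$ placed at $p_\ell^\star$. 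This depends only on the $m$-equivalence class of $\sfc$, and since each residue class has a representative in every window of $m$ consecutive integers, an easy induction gives $p_\ell^\star\ge p_\ell$ so $\sfc^\star$ lies in $[1,n]$.

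The main step is to reach $\sfc^\star$ from $\sfc$ through a sequence of $\bm\sigma_i^{\underline{m}}$-moves, which I would establish by induction on $k-\ell$, processing entries from right to left. Assume entries $\ell+1,\ldots,k$ already occupy $p_{\ell+1}^\star,\ldots,p_k^\star$ and suppose the $\ell$-th entry currently sits at some $p_\ell<p_\ell^\star$. Since $p_\ell\equiv p_\ell^\star\pmod m$ their difference is a positive multiple of $m$, so
\[
p_\ell+m\;\le\; p_\ell^\star\;\le\; p_{\ell+1}^\star-1.
\]
In particular positions $p_\ell+1,\ldots,p_\ell+m$ are all zero and $p_\ell\le n-m$, so applying $\bm\sigma_{p_\ell}^{\underline{m}}$ transposes this entry into position $p_\ell+m$. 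Iterating moves the $\ell$-th entry to $p_\ell^\star$ without disturbing entries to its right. The main obstacle this argument must overcome is that a naive left-to-right sweep could be blocked by a downstream entry; the right-to-left canonicalization, together with the unique-residue-in-a-window observation above, guarantees that the required $m$-block of zeros is always available.
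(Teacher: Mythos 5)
Your proof is correct and takes essentially the same route as the paper's: reduce to a monomial-by-monomial statement, then show that every monomial in an $m$-equivalence class can be driven to a common canonical form by $\bm\sigma_i^{\underline{m}}$-moves. The only cosmetic difference is the choice of canonical representative — you push nonzero entries rightward to a greedy right-justified position, whereas the paper pushes them leftward (repeatedly applying $\bm\sigma_i^{\underline{m}}$ whenever $c_i=\cdots=c_{i+m-1}=0$ and $c_{i+m}\ne 0$) to reach the unique configuration with no run of $m$ zeros before the last nonzero entry; pushing left has the minor advantage that boundedness by $n$ is automatic, whereas your direction needs the (correctly supplied) observation $p_\ell^\star\ge p_\ell$.
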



\subsection{Quasisymmetry via the Bergeron--Sottile map $\rope{i}$}

It should be noted that $\bm\sigma_i$ does not respect multiplication , so for example the fixed point property in Lemma~\ref{lem:sigmaqsymtest} does not immediately imply that $\qsym{n}$ is a ring. The following result is at the heart of our understanding of quasisymmetric functions. It fixes this deficit of $\bm\sigma_i$ by using the equality of certain ring homomorphisms $\rope{i}$ to characterize quasisymmetry. 
This characterization does not seem to be widely known, although it was implicitly used in the study of the connection between quasisymmetric functions and James spaces by Pechenik--Satriano \cite{PeSa22}. We call $\rope{i}$ the \emph{Bergeron--Sottile map} because they were the first to introduce it \cite{BS98}, somewhat surprisingly, in the context of  Schubert calculus (see also \cite{BS02, LSS06}).

\begin{defn}
    For $f\in \poly$ we define
    \begin{align*}\rope{i}(f)=f(x_1,\ldots,x_{i-1},0,x_{i},\ldots).\end{align*}
\end{defn}
In other words, $\rope{i}(f)$ sets $x_i=0$ and shifts $x_j\mapsto x_{j-1}$ for all $j\ge i+1$. In particular, for $f\in \poly_n$ and $i\le n$ we have $\rope{i}(f)\in \poly_{n-1}$ is given by
\begin{align*}
    \rope{i}(f)=f(x_1,\ldots,x_{i-1},0,x_i,\ldots,x_{n-1}).
\end{align*}


\begin{thm}
\label{thm:Rqsymchar}
    $f\in \poly_n$ has $f\in \qsym{n}$ if and only if $\rope{1}f=\cdots =\rope{n}f$.
\end{thm}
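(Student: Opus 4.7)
The plan is to argue by comparing coefficients of monomials. The key observation is that for any monomial $\sfx^{\mathsf{b}}$ supported in $\{1,\ldots,n-m\}$ and any $i\in\{1,\ldots,n-m+1\}$, the coefficient of $\sfx^{\mathsf{b}}$ in $\rope{i}^m f$ equals the coefficient in $f$ of $\sfx^{\sfc^{(i)}}$, where $\sfc^{(i)}$ is obtained from $\mathsf{b}$ by inserting $m$ zeros at positions $i,i+1,\ldots,i+m-1$ (equivalently, shifting all entries of index $\ge i$ up by $m$). Thus the hypothesis $\rope{1}^m f = \cdots = \rope{n-m+1}^m f$ is equivalent to the assertion that for every such $\mathsf{b}$ and every pair $i,j\in\{1,\ldots,n-m+1\}$, the $f$-coefficients of $\sfx^{\sfc^{(i)}}$ and $\sfx^{\sfc^{(j)}}$ coincide.

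The forward direction follows immediately from this reformulation: for any fixed $\mathsf{b}$, the monomials $\{\sfx^{\sfc^{(i)}}\}_i$ share the same nonzero exponents in the same order, and their support positions agree modulo $m$ (shifts by multiples of $m$ preserve residues). So the $m$-quasisymmetry of $f$ directly yields the coefficient equalities.

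For the backward direction, I would take two monomials $\sfx^{\sfc}, \sfx^{\sfc'}$ witnessing the $m$-quasisymmetric constraint---same nonzero values $a_1,\ldots,a_k$ in the same order, with matching residues modulo $m$ at support positions---and show their $f$-coefficients agree. Recording each configuration by its gap sequence $(g_0,g_1,\ldots,g_k)$ (zeros before, between, and after nonzero entries, with $g_k = n - p_k$), the residue condition translates cleanly into $g_j\equiv g'_j\pmod m$ for all $j$, while both gap sequences sum to $n-k$. I would then transform $\sfc$ into $\sfc'$ by a sequence of ``$m$-zero shifts'': each move extracts $m$ consecutive zeros from a gap $j_1$ with $g_{j_1}\ge m$ and reinserts them into a different gap $j_2$. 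Such a move corresponds exactly to invoking the hypothesis: letting $\mathsf{b}$ be the configuration with the chosen $m$ zeros removed (now supported in $[n-m]$), the coefficients of $\sfx^{\mathsf{b}}$ in $\rope{i}^m f$ and $\rope{i'}^m f$ coincide for $i$ in the removed block and $i'$ in the target gap, giving equality of the $f$-coefficients of the old and new monomials.

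The main point to verify (mild but essential) is that this iterative procedure actually reaches $\sfc'$. I would track the discrepancy $\sum_j|g_j-g'_j|$, which is always a nonnegative multiple of $m$: choosing $j_1$ with $g_{j_1}>g'_{j_1}$ and $j_2$ with $g_{j_2}<g'_{j_2}$ (which exist unless we are done, and must satisfy $g_{j_1}-g'_{j_1}\ge m$ and $g'_{j_2}-g_{j_2}\ge m$), the resulting move decreases this quantity by exactly $2m$. Validity of intermediate configurations (all positions in $[n]$) is automatic since the total gap sum $n-k$ is preserved throughout.
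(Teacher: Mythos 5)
Your proof is correct, and the forward direction is essentially the paper's. The backward direction, however, takes a genuinely different and more self-contained route. The paper observes that the vanishing of $(\rope{i+1}^m-\rope{i}^m)f$ forces $(\idem-\bm\sigma_i^{\underline{m}})f=0$ for each $i$, and then invokes \Cref{lem:sigmaqsymtest}, whose proof performs a leftward normalization (iteratively applying $\bm\sigma_i^{\underline{m}}$ to reach a canonical representative with no internal run of $\geq m$ zeros). You instead exploit the full strength of the hypothesis --- which equates the coefficients of $\sfx^{\sfc^{(i)}}$ and $\sfx^{\sfc^{(j)}}$ in $f$ for \emph{every} pair $i,j$, not just adjacent ones --- and run a direct connectivity argument on gap sequences, using the discrepancy $\sum_j|g_j-g'_j|$ as a monovariant decreasing by exactly $2m$ per move. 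The paper's route buys \Cref{lem:sigmaqsymtest} itself, an independently useful characterization of $\qsym[m]{n}$ via Hivert's action which they want on record anyway; your route buys a shorter argument that bypasses the $\bm\sigma_i^{\underline{m}}$ formalism entirely. One detail worth making explicit in a final write-up: the residue constraint on support positions is literally equivalent to $g_j\equiv g'_j\pmod m$ only for $j=0,\ldots,k-1$; the final congruence $g_k\equiv g'_k\pmod m$ then follows automatically because both gap sums equal $n-k$. You implicitly use this, but it deserves a sentence.
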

\begin{proof}
  For $1\le i \le n-1$, and $\sfc=(c_1,\ldots,c_{n-1})$, the $\sfx^{\sfc}$-coefficient in $(\rope{i+1}-\rope{i})f$ is the difference of the coefficients of $\sfx^{\sfc'}$ and $\sfx^{\sfc''}$ where $\sfc'=(c_1,\ldots,c_{i-1},c_{i},0,c_{i+1},\ldots,c_{n-1})$ and $\sfc''=(c_1,\ldots,c_{i-1},0,c_i,c_{i+1},\ldots,c_{n-1})$.
This difference is $0$ if $f\in \qsym{n}$ and therefore $\rope{i+1}f-\rope{i}f=0$ in that case.

Conversely, the vanishing of $(\rope{i+1}-\rope{i})f$ implies by the above computation that for all $\sfc$ as above the difference of the $\sfx^{\sfc'}$ and $\sfx^{\sfc''}$ coefficients in $f$ is $0$.
Noting that for each $\sfd=(d_1,\ldots,d_n)$ we have either $\sfx^{\sfd}=\bm\sigma_i\, \sfx^{\sfd}$ or $\{\sfx^{\sfd},\bm\sigma_i\, \sfx^{\sfd}\}=\{\sfx^{\sfc'},\sfx^{\sfc''}\}$ for some $\sfc=(c_1,\ldots,c_{n-1})$, we deduce that  $(\idem-\bm\sigma_i)\cdot f=0$. 
Since this is true for $1\le i \le n-1$, we have $f\in\qsym{n}$  by~\Cref{lem:sigmaqsymtest}.
\end{proof}

\begin{cor}
\label{cor:qsym_ring}
    $\qsym{n}$ is a ring.
\end{cor}

\begin{proof}
    If $f,g\in \qsym{n}$ then
    $\rope{i}(fg)=\rope{i}(f)\rope{i}(g)=\rope{i+1}(f)\rope{i+1}(g)=\rope{i+1}(fg)$ for $1\le i \le n-1$, so $fg\in \qsym{n}$.
\end{proof}

This is a classical result; see \cite[Proposition 5.1.3]{grinberg2020hopf} for a proof in the setting of quasisymmetric functions.
The typical proof that $\qsym{n}$ is closed under multiplication involves identifying an explicit basis whose multiplication can be explicitly computed.  
In contrast our algebraic proof only uses that $\rope{i}$ respects multiplication.

\subsection{Quasisymmetric divided differences}
\label{sec:quasi_divided_differences}

We now define the quasisymmetric analogue of $\partial_i$.

\begin{defn}
\label{defn:tope}
We define the operator $\tope{i}:\poly\to \poly$ by any of the equivalent expressions
\begin{align*}
\tope{i}f\coloneqq \rope{i}\partial_if=\rope{i+1}\partial_if=\frac{\rope{i+1}f-\rope{i}f}{x_i},
\end{align*}
\end{defn}

This is the quasisymmetric divided difference at the core of this work. 
We will usually call $\tope{i}$ a \emph{trimming operator}, for reasons that will be clearer in \Cref{sec:forests_as_analogues_of_schuberts}. 
For $f\in \poly_n$ and $1\le i \le n-1$, we have that $\tope{i}f\in \poly_{n-1}$ is given explicitly by
\begin{align*}\tope{i}(f)=\frac{f(x_1,\ldots,x_{i-1},x_i,0,x_{i+1},\ldots,x_{n-1})-f(x_1,\ldots,x_{i-1},0,x_i,x_{i+1},\ldots,x_{n-1})}{x_i}.\end{align*}

\begin{thm}
\label{thm:Tqsymchar}
    $f\in \poly_n$ is quasisymmetric if and only if $\tope{1}f=\cdots =\tope{n-1}f=0$.
\end{thm}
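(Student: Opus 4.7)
The plan is to reduce this immediately to \Cref{thm:Rqsymchar} by showing that the vanishing of $\tope[m]{i}f$ is equivalent to the equality $\rope{i}^m f = \rope{i+1}^m f$. From \Cref{defn:topem} we have
\begin{align*}
\tope[m]{i}f = \frac{\rope{i+1}^m f - \rope{i}^m f}{x_i},
\end{align*}
so the first thing I would verify is that $\rope{i+1}^m f - \rope{i}^m f$ is indeed divisible by $x_i$ in $\poly$. This is a short direct computation: substituting $x_i=0$ into both $\rope{i}^m f = f(x_1,\ldots,x_{i-1},0^m,x_i,x_{i+1},\ldots)$ and $\rope{i+1}^m f = f(x_1,\ldots,x_{i-1},x_i,0^m,x_{i+1},\ldots)$ yields the common polynomial $f(x_1,\ldots,x_{i-1},0^{m+1},x_{i+1},\ldots)$, so the numerator vanishes when $x_i=0$.

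Since $\poly$ is an integral domain, divisibility by $x_i$ together with the quotient being zero forces the numerator itself to be zero. Thus $\tope[m]{i}f = 0$ if and only if $\rope{i}^m f = \rope{i+1}^m f$. Applying this for each $1\le i \le n-m$, the condition $\tope[m]{1}f = \cdots = \tope[m]{n-m}f = 0$ is equivalent to the chain of equalities
\begin{align*}
\rope{1}^m f = \rope{2}^m f = \cdots = \rope{n-m+1}^m f,
\end{align*}
which by \Cref{thm:Rqsymchar} characterizes membership in $\qsym[m]{n}$.

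There is no real obstacle here; the only subtlety is the divisibility check needed to rigorously convert $\tope[m]{i}f=0$ into equality of the two $\rope{}$-shifted polynomials, and that is an elementary substitution argument. The entire proof is two lines once \Cref{thm:Rqsymchar} is in hand, so the value of the statement lies in identifying $\tope[m]{i}$ as the natural divided-difference refinement of the Bergeron--Sottile test, setting up the forthcoming ``kernel equals invariants'' parallel with the classical $\partial_i$ and $\sym{n}$.
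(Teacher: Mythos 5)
Your proof is correct and takes essentially the same route as the paper: both reduce immediately to \Cref{thm:Rqsymchar} via the equivalence $\tope[m]{i}f=0\Longleftrightarrow \rope{i}^m f=\rope{i+1}^m f$. The only difference is that you spell out the divisibility check justifying this equivalence (a useful sanity check the paper leaves implicit), but the argument is the same.
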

\begin{proof}
    This is a rephrasing of \Cref{thm:Rqsymchar} since we have $\tope{i}(f)=0\Longleftrightarrow \rope{i+1}(f)=\rope{i}(f)$.
\end{proof}
\begin{eg}
    Let  $f=x_1^2x_2+x_1^2x_3+x_2^2x_3$.
    Then we can verify by inspection that $f\in \qsym{3}$. Alternatively, we can  compute
    \begin{align*}
    \tope{1}(f)&=\frac{1}{x_1}(f(x_1,0,x_2)-f(0,x_1,x_2))=\frac{1}{x_1}(0+x_1^2x_2+0-0-0-x_1^2x_2)=0,\\
    \tope{2}(f)&=\frac{1}{x_2}(f(x_1,x_2,0)-f(x_1,0,x_2))=\frac{1}{x_2}(x_1^2x_2+0+0-0-x_1^2x_2-0)=0,
    \end{align*}
    which by \Cref{thm:Tqsymchar} implies $f\in \qsym{3}$.
\end{eg}

The twisted Leibniz rule for $\partial_i$ is $
\partial_i(fg)=\partial_i(f)g+(s_{i}\cdot f)\partial_i(g).$ Applying $\rope{i}$ to both sides of this equality and noting that $\rope{i+1}=\rope{i}s_i$ gives 
 an analogous rule for $\tope{i}$.

\begin{lem}[Twisted Leibniz rule]
\label{lem:leibniz}
For $f,g\in \poly$ we have
\begin{align*} 
\tope{i}(fg)=\tope{i}(f)\rope{i+1}(g)+\rope{i}(f)\tope{i}(g).
\end{align*}
\end{lem}


\section{Indexed forests}
\label{sec:indexed_forest}

We now discuss our primary data structure, namely indexed forests.
These forests, along with several combinatorial properties, already appear in \cite{NT_forest}.
We shall throughout compare our notions with their classical $S_\infty$--counterparts, for which we refer the reader to \cite{Mac91,Man01,St99}.

The collection of indexed forests $\indexedforests$ serves for $\tope{i}$ a role analogous to that of $S_{\infty}$ for $\partial_i$.
In \Cref{sec:thompson} we will describe a natural monoid product $F\cdot G$ on $\indexedforests$ and in \Cref{sec:forest_polynomials} it will be shown that composites of the $\tope{i}$ are indexed by $F\in \indexedforests$ in such a way that $\tope{F}\tope{G}=\tope{F\cdot G}$.

\subsection{Binary trees and indexed forests}
\label{subsec:trees_and_forest}

A \emph{binary tree} is a rooted tree where a node $v$ either has no children (in which case it is called a \emph{leaf}) or has two  ordered children $v_L,v_R$, its left child and right child (in which case $v$ is called \emph{internal}). 
We write $\internal{T}$ for the set of internal nodes. 
We write $|T|=|\internal{T}|$, and refer to this as the \emph{size} of $T$.

We write $\ast$ for the \emph{trivial} singleton rooted binary tree with $|\ast|=0$, and all other trees we call \emph{nontrivial}.
Note that $\internal{\ast}=\emptyset$, and the unique node of $\ast$ is both a root node and a leaf.

We are now ready to introduce our main combinatorial object.

\begin{defn}\label{def:indexed_forest}
    An \emph{indexed forest} is an infinite sequence $T_1,T_2,\ldots$ of binary trees where all but finitely many of the trees are $\ast$.
    We write $\indexedforests$ for the set of all indexed forests.
\end{defn}

Note that by labeling the leaves of each tree successively, we identify the leaves of $F$ with $\mathbb{N}$, associating the $i$'th leaf with $i\in \NN$.
Figure~\ref{fig:indexed_forest_eg} depicts an $F\in \indexedforests$. 
The bottom labels are the leaves, represented by crosses, identified with $\NN$.
Note that $T_2,T_4$ and $T_7$ are the only nontrivial trees.

Notions that apply to trees are now inherited by indexed forests. We write $\internal{F}=\bigcup_{i=1}^{\infty}\internal{T_i}$, and $|F|=|\internal{F}|$. 
In this way, the totality of nodes in $F$ is identified with $\internal{F}\sqcup \NN$. 
For $v\in \internal{F}$ we always write $v_L,v_R\in \internal{F}\sqcup \NN$.

We say that a  node $v\in \internal{F}$ is  \emph{terminal} if all its children are leaves.
 The forest all of whose trees are trivial is called the \emph{empty} forest and is denoted by $\emptyset$. 
 Finally, for $F\in \indexedforests$ we define its \emph{support} $\supp(F)$ to be the set of leaves in $\NN$ associated to the nontrivial trees in $F$, and for fixed $n\ge 1$ we define the class of forests
\begin{align*}
\suppfor{n}=\{F\in \indexedforests\suchthat \supp(F)\subset [n]\}.
\end{align*}
This class of forests plays the role in our theory of $S_n\subset S_{\infty}$ for fixed $n$.
\begin{figure}[!ht]
    \centering
    \includegraphics[scale=0.7]{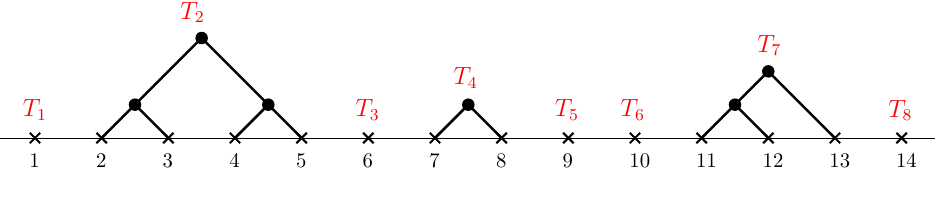}
    \caption{An indexed forest in $\suppfor{14}$}
    \label{fig:indexed_forest_eg}
\end{figure}

The indexed forest $F$ in Figure~\ref{fig:indexed_forest_eg}
 has six internal nodes and four terminal nodes. 
 In particular, its size $|F|$ is equal to $6$.
 Its support $\supp(F)$ equals $\{2,3,4,5,7,8,11,12,13\}$.
 It follows that $F$ belongs to $\suppfor{n}$ for any $n\geq 13$.

\begin{rem}
    Indexed forests were introduced  in~\cite{NT_forest}  with a slightly different notion of support, defined as follows. 
   Given a finite set $S$ of positive integers, an indexed forest with support $S$ is  the data of a plane binary tree with leaves $\{a,\ldots,b\}$ for each maximal interval $I=\{a,a+1,\ldots,b-1\}$ in $S$. By ordering these binary trees from left to right, and interspersing trivial trees given by the leaf labels that are not part of any nontrivial tree, we obtain objects clearly equivalent to the indexed forests of \Cref{def:indexed_forest}.
    This notion of support from \cite{NT_forest} was adapted to a ``parking function'' interpretation, and our notion of support is computed by replacing $S$ with $S\cup \{x: x-1\in S\}$. Although a slightly coarser notion, it is more suited to the perspective of this work.
\end{rem}

We have the following characterization:

\begin{fact}
\label{lem:fullysupported}
    $F\in \suppfor{n}$ if and only if the leaves of the first $n-|F|$ trees of $F$ are $\{1,\ldots,n\}$ and these contain all nontrivial trees.
\end{fact}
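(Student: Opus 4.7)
The proof is a straightforward leaf-counting argument once the right bookkeeping is set up. The plan is as follows.

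First, I would record the standard fact that an $(m+1)$-ary rooted plane tree with $k$ internal nodes has exactly $mk+1$ leaves. This is a one-line double count: if there are $k$ internal nodes, they contribute $(m+1)k$ children in total; of these children, exactly $k-1$ are themselves internal (all internal nodes except the root), so the number of leaves equals $(m+1)k - (k-1) = mk+1$. Note that this holds even for $k=0$, which recovers the trivial tree $\ast$ with its single leaf.

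Next I would use this to count leaves of the truncation to the first $N$ trees $T_1,\ldots,T_N$ of $F\in\indexedforests[m]$. By the convention that leaves are labeled left-to-right by $\NN$, the leaves of $T_1\sqcup\cdots\sqcup T_N$ are precisely
\begin{align*}
\{1,2,\ldots,\, m(|T_1|+\cdots+|T_N|)+N\},
\end{align*}
since the individual leaf counts sum to $\sum_{i=1}^N (m|T_i|+1)$. In particular, if all nontrivial trees of $F$ are among $T_1,\ldots,T_N$, then $|T_1|+\cdots+|T_N|=|F|$ (trivial trees contributing $0$ to the size), so these leaves are exactly $\{1,\ldots,m|F|+N\}$.

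Now the equivalence is immediate from this count. For the forward direction, suppose $F\in\suppfor[m]{n}$, and let $N_0$ be the largest index with $T_{N_0}$ nontrivial (set $N_0=0$ if all $T_i=\ast$). The rightmost leaf of $T_{N_0}$ is in $\supp(F)\subset[n]$, so $m|F|+N_0\le n$; hence $N_0\le n-m|F|$. Padding with trivial trees shows all nontrivial trees lie among $T_1,\ldots,T_{n-m|F|}$, whose leaves are $\{1,\ldots,n\}$ by the count above. For the reverse direction, if the leaves of $T_1,\ldots,T_{n-m|F|}$ are $\{1,\ldots,n\}$ and these contain every nontrivial tree, then $\supp(F)\subset\{1,\ldots,n\}$ by definition of support, so $F\in\suppfor[m]{n}$.

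There is no real obstacle here; the only thing to be careful about is the degenerate cases ($F=\emptyset$, where $|F|=0$ and the first $n$ trees are all trivial with leaves $\{1,\ldots,n\}$) and ensuring one does not conflate $n-m|F|$ with the minimal such $N$. Both are handled by allowing $N_0\le n-m|F|$ and padding with trivial trees.
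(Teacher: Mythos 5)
Your argument is correct and complete, and the paper itself states this as a ``Fact'' without supplying a proof, so there is no paper proof to compare against; the leaf-counting argument you give (a tree with $k$ internal nodes has $mk+1$ leaves, whence the first $N$ trees of $F$, if they contain all nontrivial ones, have leaf set $\{1,\ldots,m|F|+N\}$) is exactly the natural intended verification, and you handle the degenerate case $F=\emptyset$ as well as the distinction between the minimal truncation index $N_0$ and $n-m|F|$ cleanly.
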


By an application of the \emph{Cycle Lemma} (cf. \cite[\S 2.1]{DZ90}), we get the enumeration $|\suppfor{n}|=\frac{1}{n+1}\binom{2n}{n}.$
This is the usual Catalan number $\cat{n}$, which we know to be the dimension of $\qscoinv{n}$ by \cite{ABB04}, a fact that will be reproved in \Cref{sec:coinvs}.

\subsection{The code $\sfc(F)$}
\label{subsec:codes_and_descents}

We now discuss an encoding of indexed forests by sequences of nonnegative numbers, playing the role of the \emph{Lehmer code} on $S_{\infty}$. The latter is defined as the sequence $\lcode{w}\coloneqq (c_i)_{i\in \NN}\in \nvect$ given by $c_i=\#\{i<j\suchthat w(i)>w(j)\}$.

Let $F\in \indexedforests$. 
We define the \emph{flag} $\rho_F:\internal{F}\to \NN$ by setting $\rho_F(v)$ to be  the label of the leaf obtained by going down left edges starting from $v$.

\begin{defn}
\label{defn:code_forest}
The \emph{code} $\sfc(F)$ is defined as
\begin{align*}
    \sfc(F)=(c_i)_{i\in \NN} \text{ where } c_i=|\{v\in \internal{F}\suchthat \rho_F(v)=i\}|.
\end{align*}
\end{defn}

The following result is ~\cite[Proposition 3.3]{NT_forest}.

\begin{thm}
\label{thm:ForesttoCode}
    The map $\sfc:\indexedforests\to \nvect$ is a bijection.
\end{thm}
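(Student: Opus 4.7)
The plan is to construct an explicit two-sided inverse to $\sfc$ by induction on $\sum_i c_i = |F|$. The base case $\sfc = \vec{0}$ pairs with the empty forest. The structural observation driving everything is: for any nonzero $F$, setting $N \coloneqq \max\{i : c_i(F) > 0\}$, the internal nodes with $\rho_F = N$ form a chain along leftmost-child edges rising from leaf $N$, and the bottommost such node $v_N$ (whose leftmost child is literally leaf $N$) must be terminal. Indeed, were any non-leftmost child of $v_N$ internal, its leftmost descendant would be a leaf strictly greater than $N$, yielding $c_i(F) > 0$ for some $i > N$ and contradicting maximality.

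First I would define a pruning operation $\pi(F)$ by contracting $v_N$ together with its $m$ leaf siblings back into a single leaf at position $N$, shifting subsequent leaf labels down by $m$. A direct verification gives $\sfc(\pi(F)) = \sfc(F) - \mathbf{e}_N$: internal nodes with $\rho_F < N$ are unaffected by the relabeling; the ancestors of $v_N$ along the leftmost spine still have leaf $N$ as their leftmost descendant after pruning; and only $v_N$ has been removed from $\internal{F}$. Symmetrically, for $F' \in \indexedforests[m]$ and any $N \ge \max\supp(\sfc(F'))$, I would define an insertion $\iota_N(F')$ by replacing the leaf at position $N$ of $F'$ with a new internal node whose $m+1$ new leaf children inherit labels $N, N+1, \ldots, N+m$, pushing subsequent leaves up by $m$. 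The analogous case analysis gives $\sfc(\iota_N(F')) = \sfc(F') + \mathbf{e}_N$, and one checks that $\pi \circ \iota_N = \idem$ and $\iota_N \circ \pi = \idem$ where defined.

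The bijection then falls out by induction: injectivity follows since two forests with the same nonzero code share the same $N$ and hence, by the inductive hypothesis, the same pruning, which insertion at $N$ then uniquely recovers; surjectivity follows since for a nonzero $\sfc$ with max support $N$, induction furnishes the unique $F'$ with $\sfc(F') = \sfc - \mathbf{e}_N$, and $\iota_N(F')$ realizes $\sfc$. The most delicate point will be the leaf-relabeling bookkeeping under pruning and insertion — specifically, ruling out that the shift by $\pm m$ disturbs any internal node's $\rho$-value. This is guaranteed by the maximality of $N$: since $c_i(F) = 0$ for $i > N$, no internal node of $F$ can have $\rho$-value in the affected window $\{N+1, \ldots, N+m\}$, which reduces the verification to routine bookkeeping once the distinguished terminal node $v_N$ has been identified.
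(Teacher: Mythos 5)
Your proof is correct but takes a genuinely different route from the paper. The paper gives a direct global bijection: it encodes each $(m+1)$-ary tree by an $m$-Dyck path (steps $U=(1,m)$, $D=(1,-1)$ staying weakly above the axis), reads the code off the path as $U^{c_1}DU^{c_2}D\cdots$, and inverts by decomposing the infinite path $U^{c_1}DU^{c_2}D\cdots$ into maximal sections above $y=-k$ for each $k$. Your argument instead proceeds by induction on $|F|$, identifying the distinguished terminal node $v_N$ at the maximum nonzero code position and defining prune/insert operations inverse to each other. These operations are in fact precisely the trimming $F/N$ and blossoming $F\cdot N$ that the paper introduces later (\Cref{subsec:trim_blossom}), and your code-increment identities $\sfc(\pi(F))=\sfc(F)-\be_N$ and $\sfc(\iota_N(F'))=\sfc(F')+\be_N$ are the degenerate case of \eqref{eqn:codeblossom} (the $0^m$ block being inserted is swallowed by the tail of zeros because $N$ is maximal). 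So your proof anticipates the blossoming/trimming machinery and effectively derives a special case of \Cref{prop:ForesttoCode} in the process, whereas the paper establishes the bijection noninductively and then develops that machinery on top of it. The one place to watch is the terminality of $v_N$: your argument correctly observes that a non-leftmost internal child of $v_N$ would have leftmost leaf descendant $>N$, contradicting maximality, and this same maximality keeps all $\rho$-values out of the shifted window $\{N+1,\ldots,N+m\}$, which is exactly what makes the leaf-relabeling bookkeeping go through. (Minor wording: you say $v_N$ has "$m$ leaf siblings", but you mean its $m+1$ leaf children; the intent is clear.)
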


In particular any mathematical object indexed by $\indexedforests$ can be indexed by $\nvect$.
For the $F$ in Figure~\ref{fig:indexed_forest_eg} we have $\sfc(F)=(0,2,0,1,0,0,1,0,0,0,2,0,\dots)$.

\subsection{The left terminal set $\qdes{F}$}

Given an indexed forest $F$, we associate to it a set of indices that shall play a role analogous to the descent set $\des{w}$ for a permutation $w\in S_{\infty}$.
We let
\begin{align*}
    \qdes{F}\coloneqq \{\rho_F(v)\suchthat v \text{ a terminal node in } F\}.
\end{align*}
These are precisely the leaves arising as the leftmost children of terminal nodes.
For $F$ in \Cref{fig:indexed_forest_eg} we have $\qdes{F}=\{2,4,7,11\}$.
In terms of $\sfc(F)=(c_i)_{i\in \NN}$, the following criterion is an immediate consequence of the prefix traversal aspect of our bijection:
\begin{align}
\label{eq:qdes_criterion}
    i\in \qdes{F} \Longleftrightarrow c_{i}>0 \text{ and } c_{i+1}=0.
\end{align}
In particular,
\begin{align}
\label{eq:spacedqdes} i,j\in \qdes{F} \implies |i-j|\ge 2.
\end{align}

\subsection{Left and right terminally supported forests}
\label{subsec:lterf_rterf}

The following class of left-terminally supported forests plays the role of the set of permutations $w\in S_{\infty}$ with $\des{w}\subset [n]$ or equivalently $\lcode{w}=(c_1,\ldots,c_n,0,\ldots)$:
\begin{align*}
 \ltfor{n} &\coloneqq \{F\in \indexedforests\suchthat \qdes{F}\subset [n]\}\\
 &=\{F\in \indexedforests\suchthat \sfc(F)=(c_1,\ldots,c_n,0,\ldots)\}\\
 &=\{F\in \indexedforests\suchthat \rho_F(v)\le n\text{ for all }v\in \internal{F}\}
\end{align*}
where the second equality follows from~\eqref{eq:qdes_criterion}. 
$\ltfor{n}$ thus consists of those $F\in\indexedforests$ whose leaves arising as left children of internal nodes are supported on $[n]$, or equivalently such that the leftmost leaf descendant of any internal node lies in $[n]$. This latter identification implies 
$   \suppfor{n}\subset \ltfor{n}$.
For the forest $F$ in Figure~\ref{fig:indexed_forest_eg}, we have $F\in \ltfor{n}$ for all $n\geq 11$.

More generally for any subset $A\subset \NN$, an analogue of the set of permutations $w\in S_{\infty}$ with $\des{w}\subset A$ is
\begin{align*}
    \ltfor{A}=\{F\in \indexedforests\suchthat \qdes{F}\subset A\},
\end{align*}
and for $A=[n]$ we recover $\ltfor{n}=\ltfor{A}$.

The following class of right-terminally supported forests play the role of the set of permutations $w\in S_{\infty}$ with $\des{w}\cap [n-1]=\emptyset$.
For a given $n\geq 1$ and $F\in \indexedforests$, say that an internal node $v\in \internal{F}$ is supported on $[n]$ if all leaves that are descendants of $v$ lie in $[n]$. In particular $F\in  \suppfor{n}$ if and only if all its internal nodes are supported on $[n]$. In contrast, let
\begin{align*}
\rtfor{n}\coloneqq &\{F\in \indexedforests\suchthat \text{ no }v\in \internal{F}\text{ is supported on }[n]\}.\\
=&\{F\in \indexedforests\suchthat v_R>n\text{ for all terminal }v\in \internal{F}\}\\
=&\ltfor{\{n,n+1,\ldots\}}.
\end{align*}

To reorient the reader, in terms of leaves we note the following characterizations: $F\in \rtfor{n}$ (\emph{resp.}  $\ltfor{n}$, \emph{resp.} $\suppfor{n}$) if and only if all rightmost leaves of $F$  are $>n$ (\emph{resp.} all leftmost leaves are $\le n$, \emph{resp.} all rightmost leaves are $\le n$).

\subsection{Zigzag forests}
\label{subsec:Zigzag}

The final class of forests we consider are the ``zigzag forests'', which will play an analogous role to the $n$-Grassmannian permutations $\grass{n}\coloneqq \{w\in S_{\infty}\suchthat \des{w}\subset \{n\}\}$.
\begin{align*}
         \zigzag{n} &\coloneqq\ltfor{n}\cap \rtfor{n}=\ltfor{\{n\}}\\
          & = \{F\in \indexedforests\suchthat \qdes{F}\subset \{n\}\}.
\end{align*}

In Figure~\ref{fig:zigzag_eg} we show a forest in $\zigzag{5}$.
We refer to these as \emph{zigzag forests}, since they consist of at most one nontrivial tree whose internal nodes form a chain. These were previously considered under the name linear tree in~\cite[Section 3.4]{NT_forest}.
From the definition it is clear that we have
\begin{align*}
    \zigzag{n}\subset \ltfor{n}.
\end{align*}

\begin{figure}[!ht]
    \centering
    \includegraphics[scale=0.8]{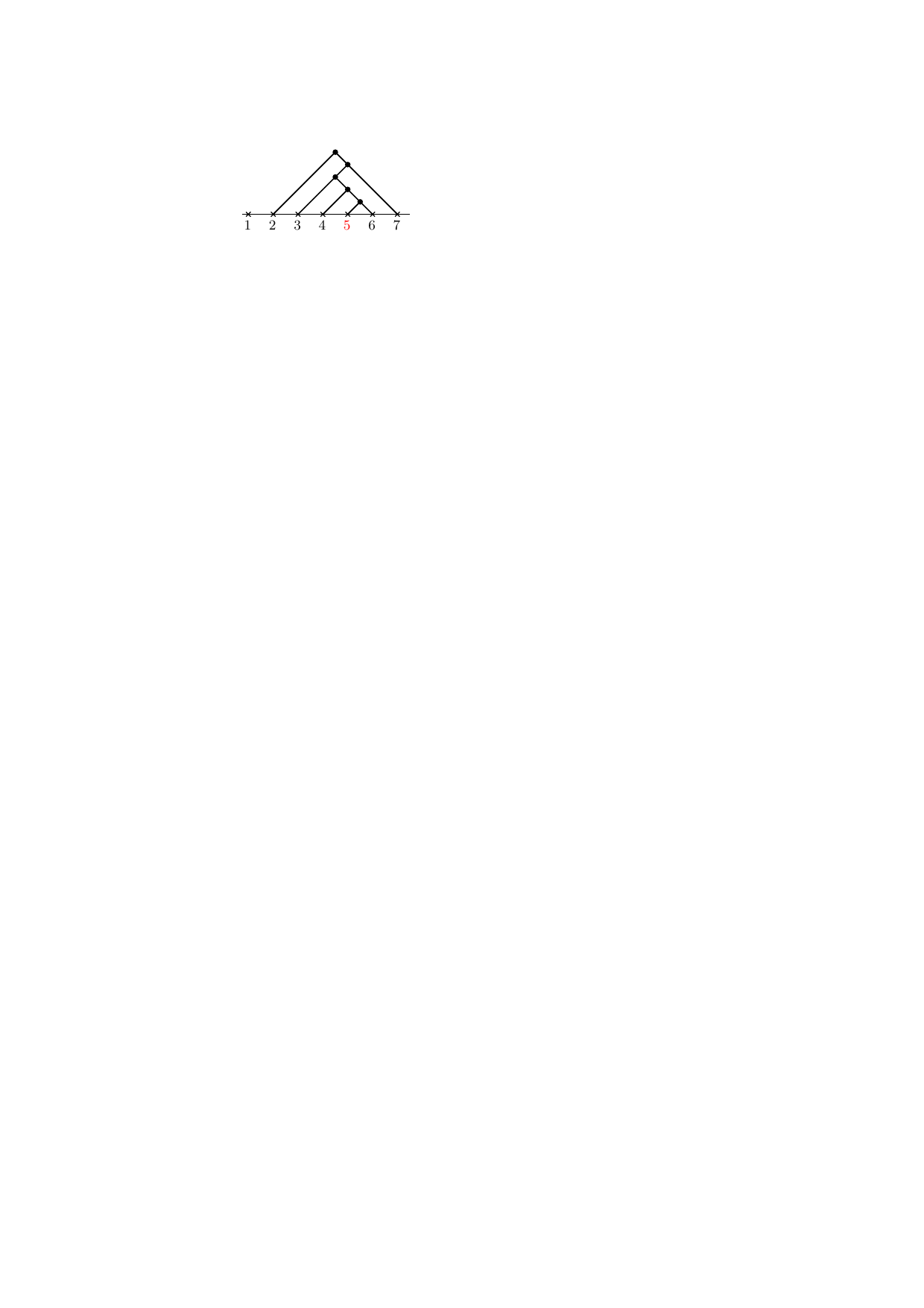}
    \caption{A forest $F\in \indexedforests$ in $\zigzag{5}$.
    \label{fig:zigzag_eg}}
\end{figure}

\subsection{Trimming and blossoming}
\label{subsec:trim_blossom}

We introduce two elementary operations of ``blossoming'' and ``trimming'' on forests, which play the role of the transformations $w\mapsto ws_i$ for $w\in S_{\infty}$ when $i\not \in \des{w}$ and $i\in \des{w}$ respectively.

\begin{defn}
For $F\in \indexedforests$ and any $i\in \NN$, the \emph{blossomed forest} $F\cdot i$ is obtained by making the $i$th leaf of $F$ into a terminal node by giving it $2$ leaf children.
 If $i\in \qdes{F}$, we define the \emph{trimmed forest} $F/i\in \indexedforests$ by removing the terminal node $v$ with $\rho_F(v)=i$.
\end{defn}

Clearly we always have $(F\cdot i)/i=F$, and if $i\in\qdes{F}$ we have $(F/i)\cdot i=F$. 
The reader curious about our choice of notation will find a satisfactory explanation in Section~\ref{sec:thompson}.

These operations are easily reflected in terms of codes. If $\sfc(F)=(c_i)_{i\in \NN}$ then for $i\in\NN$ we have
\begin{align}
\label{eqn:codeblossom}
    \sfc(F\cdot i)\coloneqq (c_1,\dots,c_{i-1},c_i+1,0,c_{i+1},c_{i+2},\dots).
\end{align}
In other words we increment the $i$th part of $\sfc(F)$ and insert a zero immediately after.
If $i\in \qdes{F}$ then $\sfc(F)=(c_1,\ldots,c_i,0,c_{i+2},c_{i+3},\ldots)$ with $c_i>0$ and
\begin{align*}
    \sfc(F/i)\coloneqq (c_1,\dots,c_{i-1},c_i-1,c_{i+2},c_{i+2},\dots).
\end{align*}
In words we decrement the $i$th part of $\sfc(F)$ and delete the zero to the immediate right.
See Figure~\ref{fig:trim_blossom_eg} depicting the twin operations. Make note of the shift in the indices comprising the support stemming from the addition/deletion of $0$s.

\begin{figure}[!ht]
    \centering
    \includegraphics[scale=0.6]{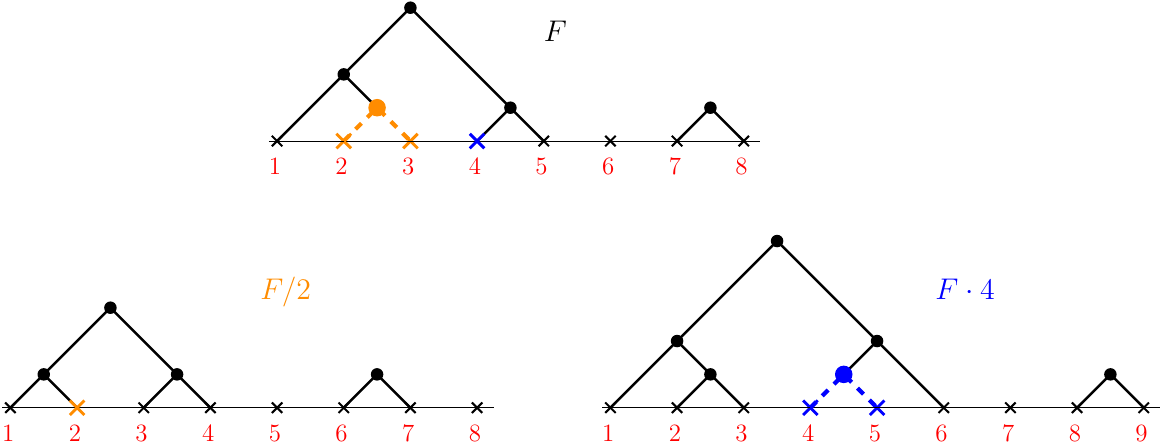}
    \caption{An $F\in\indexedforests$ with $\sfc(F)=(2,1,0,1,0,0,1,0,\dots)$, and the corresponding $F/2$ and $F\cdot 4$.}
    \label{fig:trim_blossom_eg}
\end{figure}

Iterating the notion of trimming, we obtain the notion of trimming sequences:

\begin{defn}For $F\in \indexedforests$ with $|F|=k$, we define $\Trim{F}$ recursively by setting $\Trim{\emptyset}=\{\emptyset\}$, and for $F\ne \emptyset$ we define
\begin{align*}
    \Trim{F}=\{(i_1,\ldots,i_k)\suchthat (i_1,\ldots,i_{k-1})\in \Trim{F/i_k}\text{ and }i_k\in \qdes{F}\}.
\end{align*}
\end{defn}

This plays the role of the set of reduced words $\red{w}$ for $w\in S_{\infty}$. Note that  the elements of $\Trim{F}$ are in obvious bijection with standard decreasing labelings of $F$, i.e. bijective labelings of $\internal{F}$ with numbers drawn from $\{1,\dots,|F|\}$  so that the labels decrease going down from root to terminal nodes.

\section{Forests and the Thompson monoids}
\label{sec:thompson}

We now develop the combinatorics of the \emph{Thompson monoid} $\Th$, which we will show in \Cref{sec:forest_polynomials} governs the composites of the $\tope{i}$ operators. By identifying this monoid with a monoid structure on $\indexedforests$, we will be able to index compositions of $\tope{i}$ operators as $\tope{i_1}\cdots \tope{i_k}=\tope{F}$ where $F\in \indexedforests$ and $(i_1,\ldots,i_k)\in\Trim{F}$. This is analogous to how we can index compositions of usual divided differences $\partial_{i_1}\cdots \partial_{i_k}=\partial_w$ with $w\in S_{\infty}$ for $(i_1,\ldots,i_k)$ a reduced word.

\subsection{A monoid structure on Forests}
\label{subsec:forest_monoid}

\begin{defn}
We define a monoid structure on $\indexedforests$ by taking for $F,G\in \indexedforests$ the composition $F\cdot G\in\indexedforests$ to be obtained by identifying the $i$th leaf of $F$ with the $i$th root node of $G$. The empty forest $\emptyset\in\indexedforests$ is the identity element.
\end{defn}

\begin{figure}[!ht]
    \centering
    \includegraphics[width=0.8\textwidth]{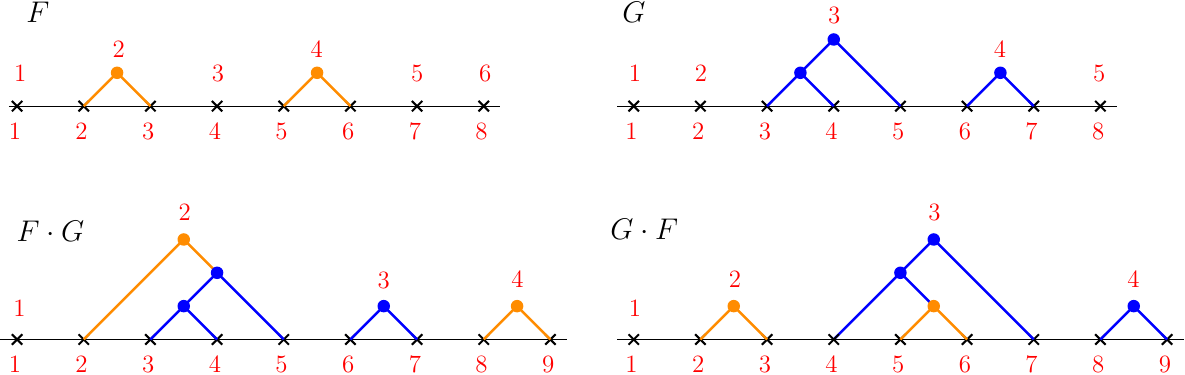}
    \caption{The products $F\cdot G$ and $G\cdot F$ for $F,G\in \indexedforests$, with both roots and leaves labeled}
    \label{fig:monoid_multiplication}
\end{figure}

If $H\in \indexedforests$, then factorizations $H=F\cdot G$ are in one-to-one correspondence with partitions $\internal{H}=A\sqcup B$ where $A$ is closed under taking parents and $B$ is closed under taking children, and then we may identify $A=\internal{F}$ and $B=\internal{G}$. An example of this is depicted in \Cref{fig:monoid_multiplication}.

Let $\wedge$ be the unique rooted plane binary tree with $|\wedge|=1$, and define $\underline{i}\in\indexedforests$ by
\begin{align*}\underline{i}=\underbrace{\ast \ast\cdots \ast}_{i-1} \wedge \ast \ast \cdots\end{align*}
We note that $F\cdot \underline{i}$ agrees with the blossoming $F\cdot i$ defined previously. 
With this notation, it is clear that for $F\in \indexedforests$ with $|F|=k$ we have
\begin{align*}
\Trim{F}=\{(i_1,\ldots,i_k): F=\underline{i_1} \cdots \underline{i_k}\}.
\end{align*}
The following shows that the $\underline{i}$ forests play an important role.

\begin{prop}
\label{prop:ForesttoCode}
     Every $F\in \indexedforests$ has a unique expression $F=\underline{1}^{c_1}\cdot\underline{2}^{c_2}\cdots$. The exponents are given by $\sfc(F)=(c_1,c_2,\ldots)$.
\end{prop}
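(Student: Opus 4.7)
My plan is to reduce the claim to the already-established bijection $\sfc\colon \indexedforests[m] \to \nvect$ of \Cref{thm:ForesttoCode}. The key preliminary observation I would verify first is that right-multiplication by $\underline{i}$ in the monoid $\indexedforests[m]$ coincides with the blossoming operation $F\mapsto F\cdot i$ introduced in \Cref{subsec:trim_blossom}. This is a direct check from the definition of the monoid product: $\underline{i}$ consists of trivial trees at all root positions except the $i$th, where the tree is $\pitchfork$. Identifying the $j$th leaf of $F$ with the $j$th root of $\underline{i}$ does nothing when $j\ne i$ (a leaf is being identified with a leaf), while for $j=i$ the leaf of $F$ is glued to the unique internal node of $\pitchfork$ and acquires $m+1$ leaf children---precisely the effect of blossoming at $i$, retroactively justifying the overloaded notation $F\cdot i$ and $F\cdot\underline{i}$.

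I would then proceed by induction on $N$ to establish
\[
\sfc(\underline{1}^{c_1}\cdot \underline{2}^{c_2}\cdots \underline{N}^{c_N}) = (c_1, c_2, \ldots, c_N, 0, 0, \ldots).
\]
The base case $N=0$ is the empty product with code $(0,0,\ldots)$. For the inductive step, each appended factor of $\underline{N+1}$ blossoms the $(N+1)$st leaf, and by \eqref{eqn:codeblossom} this increments the $(N+1)$st code entry by $1$ and inserts $m$ zeros immediately after. Since by the inductive hypothesis every entry at position $\ge N+1$ is initially $0$, applying $\underline{N+1}$ a total of $c_{N+1}$ times simply changes the $(N+1)$st entry from $0$ to $c_{N+1}$ while keeping every later entry equal to $0$, yielding the claimed code.

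Finally, for any $F$ with $\sfc(F)=(c_1,c_2,\ldots)$, the finite support of $\sfc(F)$ makes $\underline{1}^{c_1}\cdot \underline{2}^{c_2}\cdots$ a genuine finite product, which by the induction above has code $\sfc(F)$; the bijectivity of $\sfc$ then forces it to equal $F$, giving existence with the stated exponents. Uniqueness follows from injectivity of $\sfc$: any other factorization $F=\underline{1}^{c_1'}\cdot \underline{2}^{c_2'}\cdots$ would have code $(c_1',c_2',\ldots)$ by the same induction, hence $c_i'=c_i$ for all $i$. I do not anticipate a genuine obstacle here---the argument is essentially bookkeeping, with \Cref{thm:ForesttoCode} and the blossoming formula \eqref{eqn:codeblossom} doing the heavy lifting.
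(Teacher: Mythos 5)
Your proof is correct and takes essentially the same route as the paper: reduce via the bijectivity of $\sfc$ from \Cref{thm:ForesttoCode} to computing $\sfc(\underline{1}^{c_1}\cdot\underline{2}^{c_2}\cdots)$, then establish that formula by induction using the blossoming rule \eqref{eqn:codeblossom}. The only cosmetic difference is that you induct on the maximal index $N$ appending $c_{N+1}$ factors at a time, while the paper inducts on $\sum c_i$ peeling off one factor; both work for the same reason, and your explicit verification that $F\cdot\underline{i}$ agrees with the blossoming $F\cdot i$ is a detail the paper only asserts.
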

\begin{proof}
The code map is a bijection by~\Cref{thm:ForesttoCode}, thus it suffices to show that $\sfc(\underline{1}^{c_1}\cdot\underline{2}^{c_2}\cdots)=(c_i)_{i\in \NN}$. We induct on $|c|\coloneqq \sum_{i\geq 1} c_i$. 

The result is trivial if $|c|=0$, so suppose that $|c|>0$. 
Suppose further that $n$ is the largest index so that $c_n>0$.
We have \begin{align*}\sfc(\underline{1}^{c_1}\cdot\underline{2}^{c_2}\cdots \underline{n}^{c_n})=\sfc(\underline{1}^{c_1}\cdot\underline{2}^{c_2}\cdots \underline{n}^{c_n-1}\cdot \underline{n})=\sfc(F'\cdot \underline{n})\end{align*}
where by the inductive hypothesis $\sfc(F')=(c_1,\ldots,c_{n-1},c_n-1,0,\ldots)$. Hence by~\eqref{eqn:codeblossom} we have $\sfc(F'\cdot \underline{n})=(c_1,\ldots,c_n,0,\ldots)$ as desired.
\end{proof}

The following says that the monoid $\indexedforests$ is \emph{right-cancellable}.

\begin{prop}
\label{prop:rightcancellable}
    For fixed $G\in \indexedforests$, the map $H\mapsto H\cdot G$ is an injection on $\indexedforests$.
\end{prop}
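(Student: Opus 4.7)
The plan is to proceed by induction on $|G|$. The base case $|G|=0$ is trivial: then $G=\emptyset$ and $H \cdot G = H$, so the map is the identity on $\indexedforests[m]$.

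For the inductive step, I will reduce the problem to showing injectivity of right multiplication by the elementary forests $\underline{j}$. If $G$ is nonempty, choose any $j \in \qdes{G}$ (which is nonempty because $G$ has at least one terminal node, and terminal nodes contribute elements of $\qdes{G}$ via $\rho_G$). The identity $(G/j) \cdot \underline{j} = G$ recorded in \Cref{subsec:trim_blossom} lets us write $G = G' \cdot \underline{j}$ where $G' = G/j$ has $|G'| = |G| - 1$. By associativity of the monoid product, the map $H \mapsto H \cdot G$ factors as $H \mapsto H \cdot G'$ followed by $K \mapsto K \cdot \underline{j}$. The first map is injective by the inductive hypothesis, so it suffices to prove that the second is injective.

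This remaining step is essentially immediate: right multiplication by $\underline{j}$ is precisely the blossoming operation $H \mapsto H \cdot j$ from \Cref{subsec:trim_blossom}, and the identity $(H \cdot j)/j = H$ supplies an explicit left inverse, namely trimming at $j$. Hence blossoming is injective, completing the induction. I do not anticipate any significant obstacle; the only piece of structure being used beyond associativity is that blossoming and trimming at a fixed index are mutually inverse, which was already observed. An entirely equivalent alternative would be to invoke \Cref{prop:ForesttoCode} to write $G = \underline{1}^{c_1}\underline{2}^{c_2}\cdots \underline{n}^{c_n}$ and peel off the rightmost generator $\underline{n}$ (assuming $c_n>0$), leading to the same reduction.
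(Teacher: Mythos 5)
Your proof is correct and takes essentially the same approach as the paper: write $G$ as a product of elementary forests $\underline{i}$ and recover $H$ from $H\cdot G$ by repeated trimming, using $(K\cdot j)/j = K$. The paper just peels off all the generators at once from a fixed trimming sequence $G=\underline{i_1}\cdots\underline{i_k}$, whereas you package the same step as an induction on $|G|$; the content is identical.
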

  Indeed,  by writing $G=\underline{i_1}\cdots \underline{i_k}$, we can recover $H$ from $H\cdot G$ by $H=(((H\cdot G/\underline{i_k})/\underline{i_{k-1}})\cdots )/\underline{i_1}$. We can thus define the following.
\begin{defn}
    For $F,G\in \indexedforests$, say $F\ge G$ if $F=H\cdot G$ for some $H\in \indexedforests$. If $F \ge G$ then we write $F/G\in \indexedforests$ to be the unique indexed forest with $F=(F/G)\cdot G$.
\end{defn}

The following is true in any right-cancellable monoid:

\begin{cor}
\label{prop:quotients}
    If $F\ge H$, then $G\ge F$ if and only if both $G\ge H$ and $G/H\ge F/H$. Under either supposition we have $G/F=(G/H)/(F/H)$.
\end{cor}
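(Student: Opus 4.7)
The plan is to unpack the definitions of $\ge$ and $/$ in terms of monoid multiplication and then chain the factorizations together, using \Cref{prop:rightcancellable} at the end to identify the two expressions for $G/F$.

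First I would handle the forward direction. Assume $F \ge H$ and $G \ge F$. By definition $F = (F/H) \cdot H$ and $G = (G/F) \cdot F$. Substituting one into the other gives
\begin{align*}
G = (G/F) \cdot (F/H) \cdot H,
\end{align*}
which exhibits $G$ as a right multiple of $H$, so $G \ge H$, and moreover $(G/F)\cdot(F/H)$ plays the role of $G/H$. By right-cancellability (\Cref{prop:rightcancellable}) there is a unique such quotient, so $G/H = (G/F) \cdot (F/H)$. This factorization in turn exhibits $G/H$ as a right multiple of $F/H$, giving $G/H \ge F/H$ with $(G/H)/(F/H) = G/F$.

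Next I would handle the converse. Assume $F \ge H$, $G \ge H$, and $G/H \ge F/H$. Then
\begin{align*}
G = (G/H) \cdot H = \bigl((G/H)/(F/H)\bigr) \cdot (F/H) \cdot H = \bigl((G/H)/(F/H)\bigr) \cdot F,
\end{align*}
so $G \ge F$, and by right-cancellability applied to the two expressions $G = (G/F) \cdot F = ((G/H)/(F/H)) \cdot F$ we conclude $G/F = (G/H)/(F/H)$, which also gives the common final formula under either hypothesis.

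I don't anticipate a genuine obstacle here: the whole argument is a bookkeeping exercise in associativity plus one invocation of right-cancellability to pin down quotients uniquely. The only thing to be careful about is to not accidentally assume left-cancellability, which is why the manipulations above only ever attach factors on the \emph{left} of a common suffix before invoking \Cref{prop:rightcancellable}.
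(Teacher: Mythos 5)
Your proof is correct and is exactly the bookkeeping argument the paper has in mind when it remarks that the statement holds in any right-cancellable monoid (the paper gives no explicit proof). The two uses of \Cref{prop:rightcancellable} — to identify $G/H = (G/F)\cdot(F/H)$ in the forward direction and $G/F = (G/H)/(F/H)$ in the converse — are precisely what's needed, and your caution about avoiding left-cancellability is apt.
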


\subsection{The Thompson monoid}
\label{subsec:thompson_monoid}

We consider the following monoid given by generators and relations presentation  (see \Cref{rem:Thname} for an explanation of the name).

\begin{defn}
\label{defn:ThMon}
    The Thompson monoid $\Th$ is the quotient of the free monoid $\{1,2,\ldots\}^*$ by the relations $i\cdot j=j\cdot (i+1)$ for $i>j$.
\end{defn}

It turns out to describe exactly our monoid structure on $\indexedforests$. 

\begin{thm}
\label{thm:thomisom} The map $\Th\to \indexedforests$ given by $i\mapsto \underline{i}$ is a monoid isomorphism.
\end{thm}

\begin{proof}
The monoid structure on $\indexedforests$ satisfies
\begin{align*}
    \underline{i}\cdot \underline{j}=\underbrace{\ast\cdots \ast}_{j-1}\wedge\underbrace{\ast\cdots \ast}_{i-j}\wedge\ast\ast\cdots=\underline{j}\cdot \underline{i+1}\text{ whenever }i>j,
\end{align*}
It follows that the map is a well-defined monoid morphism. It is surjective since the indexed forests $\underline{i}$ generate $\indexedforests$ by \Cref{prop:ForesttoCode}.
    Using the rules $i\cdot j=j\cdot (i+1)$ for $i>j$, every element $i_1\cdots i_k\in\Th$ can be written as $1^{c_1}\cdot 2^{c_2}\cdots$ for some $c_1,c_2,\ldots$ by moving the smallest $i_j$ to the front and recursing on the remainder of the word. But each $1^{c_1}2^{c_2}\cdots$ maps to a unique indexed forest $\underline{1}^{c_1}\cdot \underline{2}^{c_2}\cdots$ by \Cref{prop:ForesttoCode}, which establishes injectivity of the map.
\end{proof}

  From now on we will tacitly identify elements $i_1\cdots i_k\in \Th$ and the associated forest $\underline{i_1} \cdots \underline{i_k}\in \indexedforests$, and so omit the underlines.

\begin{rem}
\label{rem:Thname}
    By formally adding inverses to the elements of $\Th$ we obtain the \emph{Thompson group}
    \begin{align*}
    G_{2}\coloneqq \langle \{r_i\}_{i\in \NN}\suchthat r_ir_j=r_jr_{i+1}\text{ for }i>j\rangle,
    \end{align*}
    the group of piecewise-linear homeomorphisms $f:[0,1]\to [0,1]$, all of whose nonsmooth points lie in $\mathbb{Z}[\frac{1}{2}]$ and whose slopes are powers of $2$ \cite[\S 4]{Bro87}. 
    The elements of $\Th$ correspond to those maps whose nonsmooth points have $x$-coordinates of the form $1-\frac{1}{2^k}$. We refer the reader to \cite{BelkBrown05,CFPnotes96} for details and \cite{DehTes19,Sunic07} for further combinatorial considerations. 
\end{rem}

\subsection{A monoid factorization}
\label{subsec:monoid_factorization}

Consider the following canonical decomposition for permutations  $w\in S_{\infty}$ with $\des{w}\subset [n]$, which index the $n$-variable Schubert polynomials $\schub{w}(x_1,\ldots,x_n)$.
\begin{obs}\label{obs:grassmannian}
Fix $n\geq 1$.
Every $w\in S_{\infty}$  can be uniquely written as $w=uv$ where
 $\des{u}\cap [n-1]=\emptyset$ and $v\in S_n$.
 Here $v\in S_n$ is the unique permutation so that $w(v^{-1}(1))<w(v^{-1}(2))<\cdots < w(v^{-1}(n))$ and $u=wv^{-1}$.
Moreover $\des{w}\subset [n]$ if and only if $\des{u}\subset \{n\}$, i.e. $u$ is an $n$-Grassmannian permutation.
\end{obs}
Let us give an analogue of this factorization for forests, which will be of particular importance when studying quasisymmetric coinvariants in \Cref{sec:coinvs}.  
To state it, we need the map $\tau:\indexedforests\to \indexedforests$ defined by $\tau(F)=\ast,F$, which shifts the forest one unit to the right. For $G\in \indexedforests$ of the form $G=\ast, F$ we also write $\tau^{-1}(G)=F$, i.e. $\tau^{-1}$ shifts indexed forests one unit to the left if possible.

\begin{thm}
\label{thm:forestfactorization}
Let $n\geq 1$, and $F\in\indexedforests$. Let $H\le F$ be the forest induced by all internal nodes of $F$ that are supported on $[n]$. Then $F\mapsto (\tau^{|H|}(F/H),H)$ is a bijection:
\begin{align*}\Theta_n:\indexedforests\to \{(R,H)\in \rtfor{n}\times \suppfor{n}\suchthat R=\emptyset \text{ or } \min\supp R > |H|\}.
\end{align*}
It restricts to a bijection
\begin{align*}\Theta'_n:\ltfor{n}\to \{(G,H)\in \zigzag{n}\times \suppfor{n}\suchthat G=\emptyset \text{ or } \min\supp G > |H|\}.
\end{align*}
\end{thm}

We give an example of $\Theta'_n$ in \Cref{fig:forest_factorization_eg}. 
\begin{figure}[!ht]
    \centering
    \includegraphics[width=\textwidth]{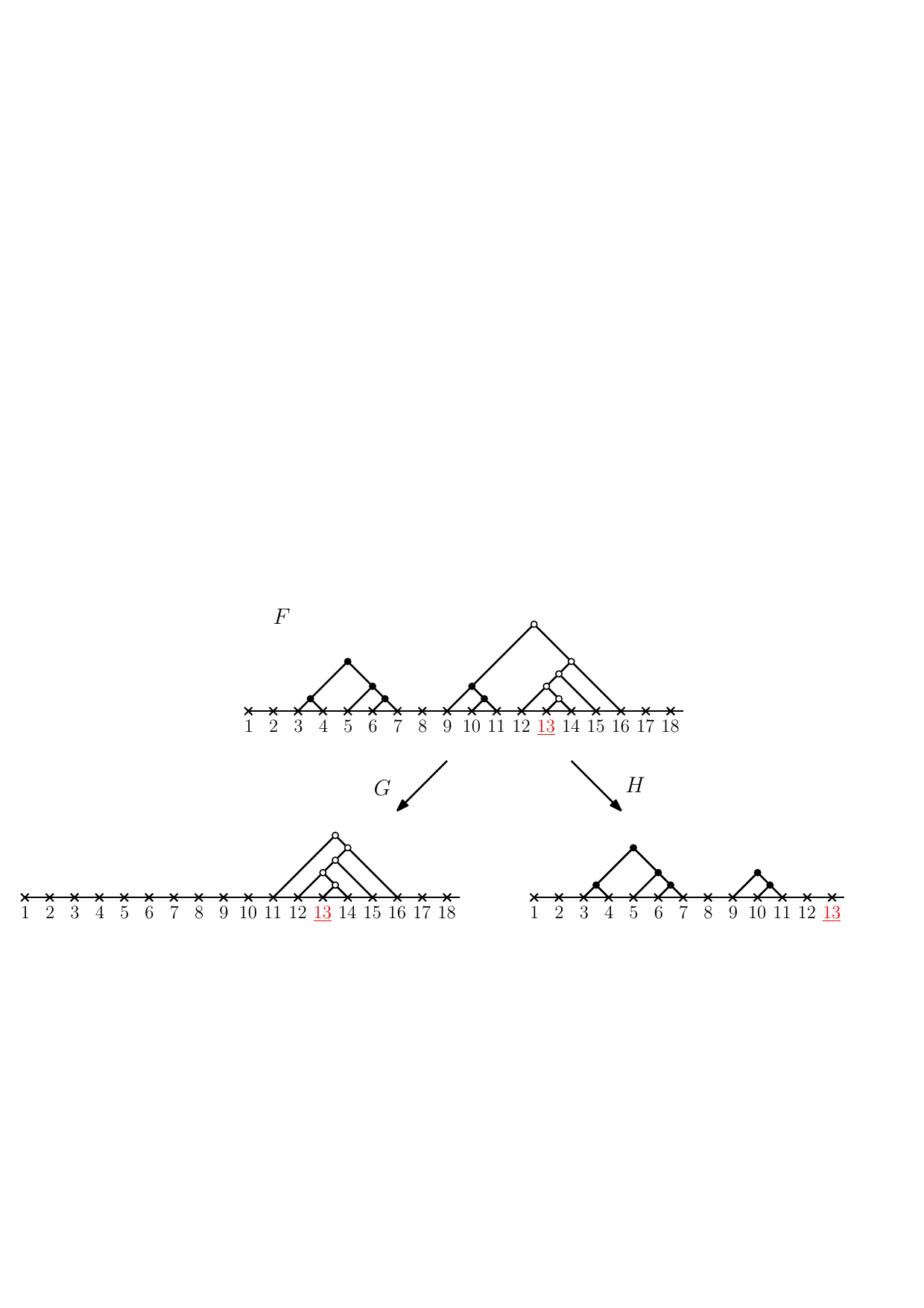}
    \caption{Example of the map $\Theta'_{n}$ for $n=13$. White and black vertices contribute to $G$ and $H$ respectively.}
    \label{fig:forest_factorization_eg}
\end{figure}

\begin{proof}
Let us first show that $\Theta_n$ is well-defined. By construction $H$ is clearly a subforest of $F$ that belongs to $\suppfor{n}$. 
By \Cref{lem:fullysupported} its first $n-|H|$ trees $T_1,\ldots,T_{n-|H|}$ have $[n]$ as the union of their leaves, and the other trees are trivial. As $F=(F/H)\cdot H$, we see that $F$ is obtained by grafting $T_1$ through $T_{n-|H|}$ to the first $n-|H|$ leaves of $F/H$. 
None of these first $n-|H|$ leaves can be the rightmost leaf of a node of $F/H$, as then the corresponding node in $F$ would be supported on $[n]$. 
It follows that $F/H\in \rtfor{n-|H|}$ and thus $\tau^{|H|}(F/H)\in \rtfor{n}$.
 So $\Theta_n$ is well-defined.

Clearly $\Theta_n$ is injective, as if $\Theta_n(F)=(R,H)$ then $F=(\tau^{-|H|}R)\cdot H$. 
Let us show surjectivity. 
Fix $(R,H)\in \rtfor{n}\times \suppfor{n}$ with $\min\supp R > |H|$. By definition of the monoid product, all nodes in $(\tau^{-|H|}R)\cdot H$ coming from $H$ are supported on $[n]$ since $H\in\suppfor{n}$. Now fix a node $v$ in $\tau^{-|H|}R$. Since $\tau^{-|H|}R\in \rtfor{n-|H|}$, the tree rooted at $v$ has a rightmost leaf descendant $>n-|H|$. Now the first $n-|H|$ trees in $H$ have leaf set $[n]$, so in $(\tau^{-|H|}R)\cdot H$ the tree rooted at the node coming from $v$ will have a rightmost leaf descendant $>n$. Thus no node in $(\tau^{-|H|}R)\cdot H$ coming from $\tau^{-|H|}R$ is supported on $[n]$.
It follows that $\Theta_n((\tau^{-|H|}R)\cdot H)=(R,H)$.

Assume now $F\in\ltfor{n}$, so that all leftmost leaves are $\leq n$, and let $\Theta'_n(F)=(G,H)$. 
If $v$ is a terminal node of $\tau^{-|H|}G$, then it has a leaf $>n-|H|$ since $\tau^{-|H|}G\in \rtfor{n-|H|}$. 
The corresponding node $v_F$ in $F=(\tau^{-|H|}G)\cdot H$ has a leaf descendant $\le n$ which implies that $v$ has also a leaf $\leq n-|H|$. 
This implies $\rho_{\tau^{-|H|}G}(v)=n-|H|$.
Since this holds for all terminal nodes of $\tau^{-|H|}G$ we have $\tau^{-|H|}G\in \zigzag{n-|H|}$, i.e. $G\in \zigzag{n}$. By the same reasoning in reverse we have that  $\tau^{-|H|}G\in \zigzag{n-|H|}$ implies that $F\in\ltfor{n}$, and thus $\Theta'_n$ is a bijection.
\end{proof}

\section{Forest polynomials $\oneforestpoly{F}$ and trimming operators $\tope{F}$}
\label{sec:forest_polynomials}

We now consider the family of \textit{forest polynomials} $\oneforestpoly{F}$ indexed by $F\in \indexedforests$ first introduced by the first and third authors \cite{NT_forest}. We also introduce composites $\tope{F}$ of the operators $\tope{i}$ indexed by the same set. These will play the roles of $\{\schub{w}:w\in S_{\infty}\}$ and $\{\partial_w:w\in S_{\infty}\}$ respectively.

\subsection{Forest polynomials $\oneforestpoly{F}$}
\label{subsec:forestpoly}

We begin by introducing the combinatorial definition. In the sequel we shall not need this; see Remark~\ref{remark:combinatorial_definition_useless}.
\begin{defn}[{\cite[Definition 3.1]{NT_forest}}]
    For $F\in \indexedforests$, define $\compatible{F}$ to be the set of all $\kappa:\internal{F}\to \NN$ such that for all $v\in \internal{F}$ with children $v_L,v_R\in \internal{F}\sqcup \NN$ we have
    \begin{itemize}
        \item $\kappa(v)\le \rho_F(v)$,
        \item If $v_L\in \internal{F}$ then
        $\kappa(v)\le \kappa(v_L)$, and if $v_R\in \internal{F}$ then
        $\kappa(v)< \kappa(v_R)$.
        \end{itemize}
    The forest polynomial $\oneforestpoly{F}$ is the generating function for $\compatible{F}$:
    \begin{align*}
    \oneforestpoly{F}=\sum_{\kappa\in \compatible{F}}\prod_{v\in \internal{F}}x_{\kappa(v)}.
    \end{align*}
\end{defn}

From $F\in \indexedforests$ and its eight fillings in Figure~\ref{fig:forest_poly_eg} we calculate that
\begin{equation}
\label{eq:forest_example_0201}
    \oneforestpoly{F}=x_{1}^{2} x_{2} + x_{1} x_{2}^{2} + x_{1}^{2} x_{3} + x_{1} x_{2} x_{3} + x_{2}^{2} x_{3} + x_{1}^{2} x_{4} + x_{1} x_{2} x_{4} + x_{2}^{2} x_{4}.
\end{equation}

\begin{figure}[!ht]
    \centering
    \includegraphics[width=0.8\textwidth]{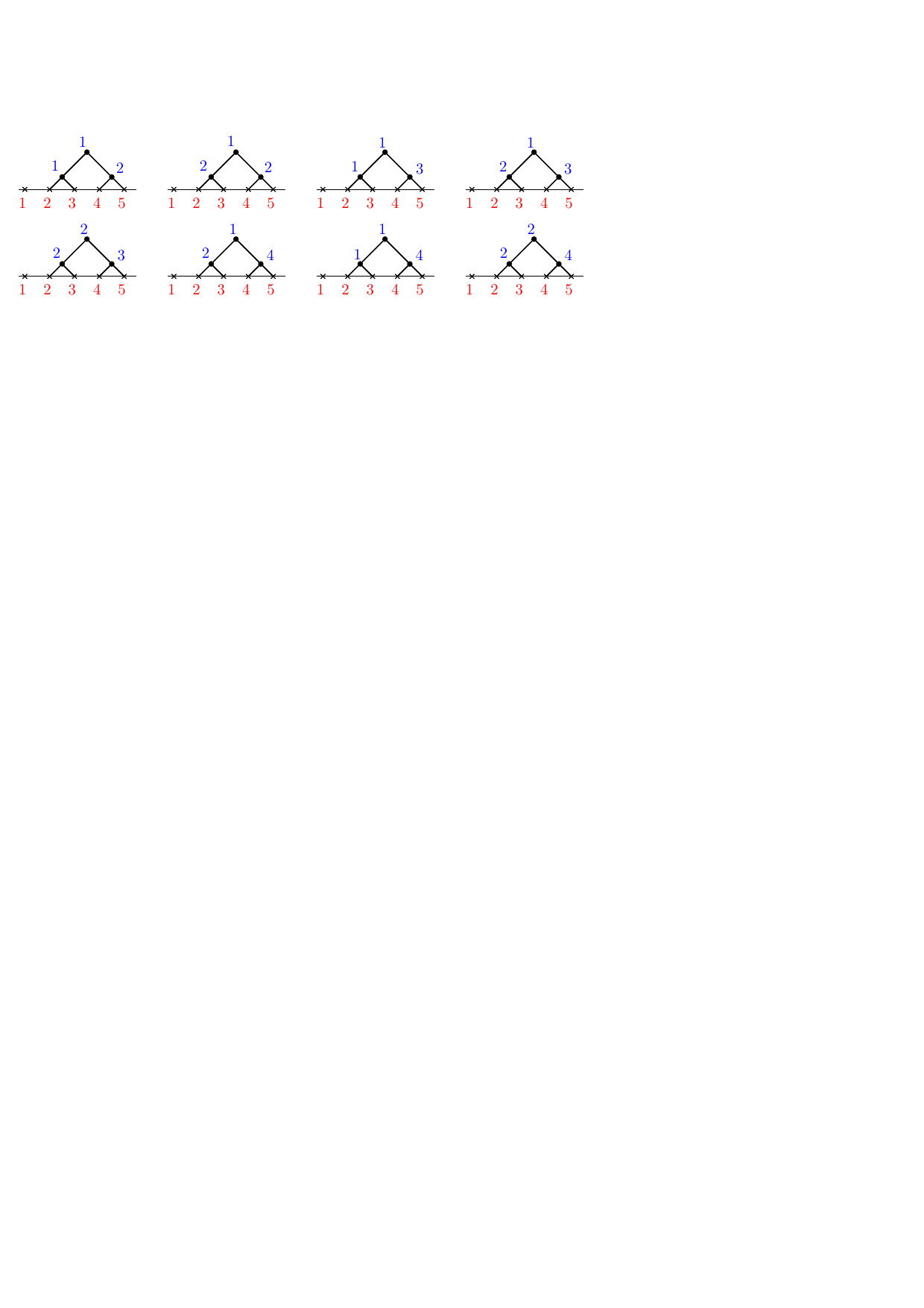}
    \caption{An $F\in \indexedforests$ with the eight fillings in $\compatible{F}$.}
    \label{fig:forest_poly_eg}
\end{figure}

Recall that the monomial expansion of Schubert polynomials can be written as
\begin{align*}
\schub{w}=\sfx^{\lcode{w}}+\sum_{\sfd<\lcode{w}}b_{\sfd}\sfx^{\sfd}
\end{align*}
where the ordering in the sum is the \emph{revlex} (reverse lexicographic) ordering.
The following fact is analogous.
\begin{prop}\label{prop:leading_monomial}
For $F\in \indexedforests$ we have the following expansion under the revlex ordering:
$$
\oneforestpoly{F}=\sfx^{\sfc(F)}+\sum_{\sfd<\sfc(F)} a_{\sfd}\sfx^{\sfd}.
$$
\end{prop}
\begin{proof}
The claim follows because the filling $\kappa(v)=\rho_F(v)$ always belongs to $\compatible{F}$, and every other filling gives a monomial that is smaller in the revlex ordering.
\end{proof}
Going back to the $F$ in Figure~\ref{fig:forest_poly_eg}, we have $\sfc(F)=(0,2,0,1,0,\dots)$, and $\sfx^{\sfc(F)}=x_2^2x_4$ is indeed the revlex leading term in $\forestpoly{F}$ computed in~\eqref{eq:forest_example_0201}.

An immediate corollary of Proposition~\ref{prop:leading_monomial} is that $\{\forestpoly{F}:F\in \indexedforests\}$ is a basis of $\poly$; even more, $\{\forestpoly{F}:F\in \ltfor{n}\}$ is a basis of $\poly_n$ for any $n\geq 1$. 
We will show this again in \Cref{prop:polynbasis} using the new divided difference formalism we will introduce shortly.

\subsection{Trimming operators $\tope{F}$}
\label{subsec:trimming_ops}

Let $\tope{}:\poly_{2}\to \poly_1$ be the operator
\begin{align}
\label{eqn:topedef}
    \tope{}(f)=\frac{f(x,0)-f(0,x)}{x}.
\end{align}
Viewing $\poly=\poly_1^{\otimes \infty}$ we have $\tope{i}=\idem^{\otimes i-1}\otimes \tope{}\otimes \idem^{\otimes \infty}$. Because of this, it turns out that composites $\tope{i_1}\cdots \tope{i_k}$ are naturally encoded by the structure of an indexed forest. For example, we can write $\tope{2}\tope{2}\tope{4}\tope{7}\tope{11}\tope{11}$ as
\begin{align*}
\idem\otimes \tope{}\left(\tope{}(\idem^{\otimes 2})\otimes\tope{}(\idem^{\otimes 2})\right) \otimes \idem \otimes \tope{}(\idem^{\otimes 2}) \otimes \idem^{\otimes 2} \otimes  \tope{}\left(\tope{}(\idem^{\otimes 2})\otimes \idem\right) \otimes \idem^{\otimes \infty}
\end{align*}
and this latter expression is nested via the parenthesization in a way that is encoded by $F=2\cdot 2 \cdot 4\cdot 7\cdot 11\cdot 11\in \indexedforests$, the forest in Figure~\ref{fig:indexed_forest_eg}. 

In this way $F$ can be thought of as encoding a composite $\tope{}$ operator taking inputs in the leaves and producing an output in the roots, which explains why the compositional structure of the $\tope{i}$ is reflected in the monoid composition on $\indexedforests$.

Using the Thompson monoid gives us a quick way to prove this identification.

\begin{prop}
\label{prop:topethompson}
    $\tope{i}\tope{j}=\tope{j}\tope{i+1}$ for $i>j$. In particular $i\mapsto \tope{i}$ induces a representation of $\Th$ via compositions of the $\tope{i}$ operators.
\end{prop}
\begin{proof}
We verify $\tope{i}\tope{j}=\idem^{\otimes j-1}\otimes \tope{}\otimes \idem^{i-j-1}\otimes \tope{}\otimes \idem^{\otimes \infty} =\tope{j}\tope{i+1}$.
\end{proof}

\begin{defn}
    For $F\in \Th$, define $\tope{F}\coloneqq \tope{i_1}\cdots \tope{i_k}$ for any expression $F=i_1\cdots i_k$.
\end{defn}

In the next section we develop the divided difference formalism relating forest polynomials $\{\oneforestpoly{F}: F\in \indexedforests\}$ to the trimming operators $\tope{F}$.

\section{Characterizing forest polynomials via trimming operators}
\label{sec:forests_as_analogues_of_schuberts}

This section forms the core of this work, the main result being \Cref{thm:forestunique}. 
Every result is exactly analogous to a corresponding result for divided differences $\partial_w$ and Schubert polynomials, with the following theorem being directly analogous to the interaction in~\eqref{eqn:partiali_on_schubs}.
We defer its proof by explicit computation to \Cref{sec:ProofThatTrimsWork}.

\begin{thm}
\label{thm:topetrims}
    For $F\in \indexedforests$ and $i\geq 1$ we have
         \begin{align*}
         \tope{i}\oneforestpoly{F}=
         \begin{cases}
            \oneforestpoly{F/i} & \text{if }i\in \qdes{F}\\
            0&\text{otherwise.}
         \end{cases}
         \end{align*}
     \end{thm}

In \Cref{fig:trimmingexample} we depict successive applications of trimming operators $\tope{i}$ to a forest polynomial $\oneforestpoly{F}$, which by \Cref{thm:topetrims} produces further forest polynomials associated to trimmed forests. If $\tope{i}$ does not appear then its application gives $0$.

\begin{figure}
    \centering    \includegraphics[scale=0.7]{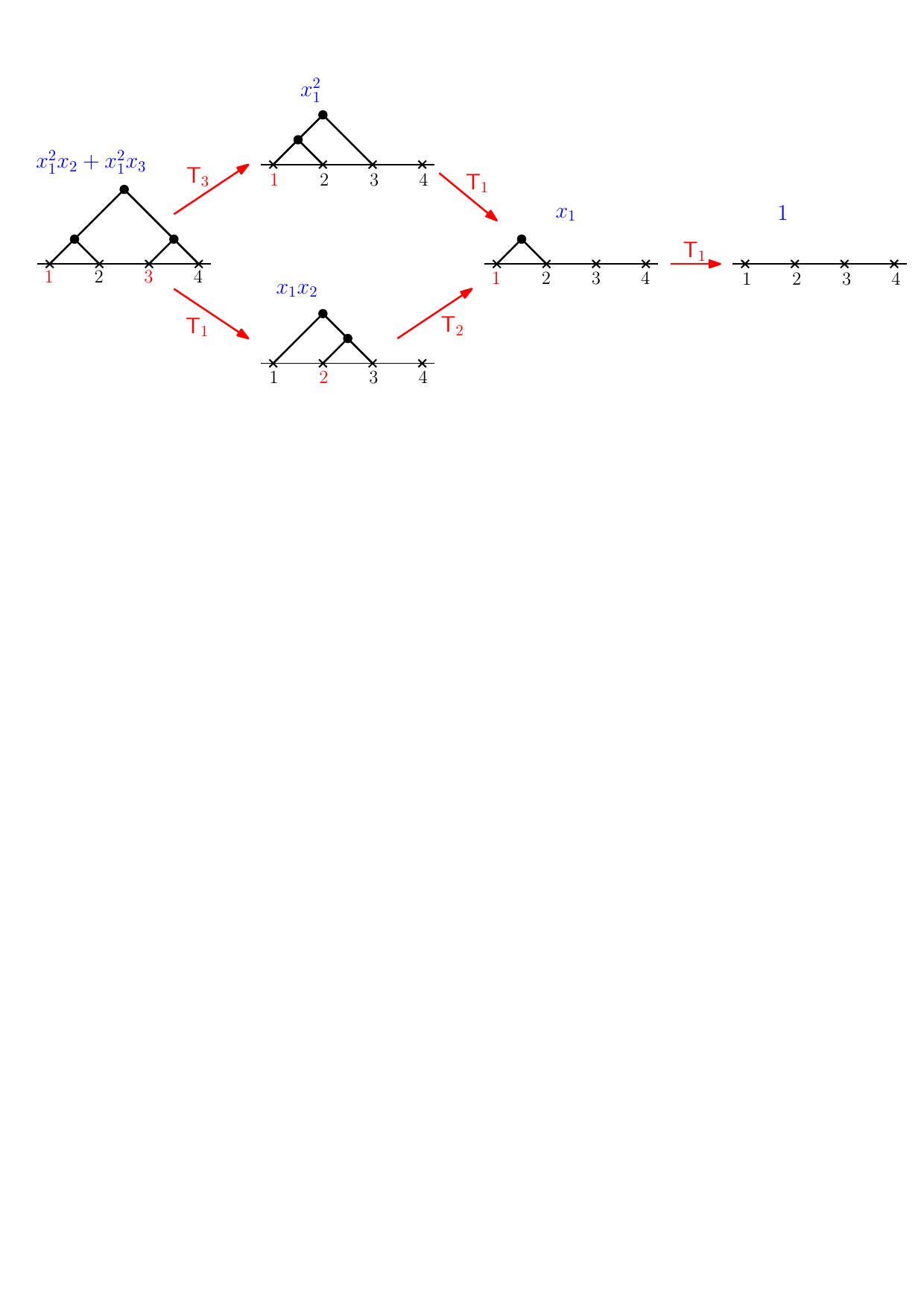}
    \caption{Sequences of $\tope{i}$ applied to  $\oneforestpoly{F}$ with $F=1\cdot 1\cdot 3\in \indexedforests$}
    \label{fig:trimmingexample}
\end{figure}

\begin{rem}
\label{remark:combinatorial_definition_useless}
    The actual definition of forest polynomials will play no role in all subsequent proofs. 
    As we shall see in \Cref{thm:forestunique}, the polynomials $\oneforestpoly{F}$ are in fact determined by the condition in \Cref{thm:topetrims}, homogeneity, and the normalization condition $\oneforestpoly{\emptyset}=1$. 
    We will use this characterization in proofs, signaling however when a simple alternative proof using the combinatorial definition can be given.
    
  The classical proof that Schubert polynomials exist (i.e. a homogeneous family of polynomials  satisfying~\eqref{eqn:partiali_on_schubs} exists) is by taking the ansatz $\schub{w_{0,n}}=x_1^{n-1}\cdots x_{n-1}$ for $w_{0,n}$ the longest permutation in $S_n$, showing  that
    $\partial_{w_{0,n-1}^{-1}w_{0,n}}\schub{w_{0,n}}=\schub{w_{0,n-1}}$ by direct computation, and then defining $\schub{u}=\partial_{u^{-1}w_{0,n}}\schub{w_{0,n}}$
    for $n$ sufficiently large so that $u\in S_n$.
    The forest polynomials do not seem to have sufficiently elementary descriptions for some well-chosen sequence of forests $F_n$ such that every other $G\in \indexedforests$ has $F_n\ge G$. 
    So it does not seem possible to proceed in a similar manner.
\end{rem}

\begin{lem}
\label{lem:kernelofall}
    $\bigcap_{i\ge n+1}\ker(\tope{i})=\poly_n$. In particular, $\bigcap_{i\ge 1}\ker(\tope{i})=\mathbb{Z}$.
\end{lem}
\begin{proof}
Clearly $\poly_n\subset \bigcap_{i\ge n+1}\ker(\tope{i})$. Conversely, if $k\ge n+1$ and $f(x_1,\ldots,x_k)$ is a polynomial depending nontrivially on $x_k$, then $\tope{k}f=\frac{1}{x_k}(f-f|_{x_k=0})\ne 0$ so $f\not \in \ker(\tope{k})$.
\end{proof}

\begin{thm}
\label{thm:forestunique}
    The family of forest polynomials $\{\oneforestpoly{F}:F\in \indexedforests\}$ is uniquely characterized by the properties $\oneforestpoly{\emptyset}=1$, $\oneforestpoly{F}$ is homogeneous, and $\tope{i}\oneforestpoly{F}=\delta_{i\in \qdes{F}}\oneforestpoly{F/i}$.
    (Here $\delta_{i\in \qdes{F}}$ equals $1$ if $i\in \qdes{F}$ and 0 otherwise.)
\end{thm}
\begin{proof} 
    It follows from the definition of forest polynomials and \Cref{thm:topetrims} that they satisfy these properties.
    Suppose there were another such family of polynomials $\{H_F:F\in \indexedforests\}$. 
    From $\tope{i}H_F=\delta_{i\in \qdes{F}}H_{F/i}$ and $H_{\emptyset}=1$ we see by induction that $H_F$ has degree $|F|$. 
    By induction, assume that we know that $H_F=\oneforestpoly{F}$ for $|F|<k$. 
    Then given some $F\in \indexedforests$ with $|F|=k$ we have $\tope{i}(\oneforestpoly{F}-H_F)=\delta_{i\in \qdes{F}}(\oneforestpoly{F/i}-H_{F/i})=0$ for all $i$. Therefore by \Cref{lem:kernelofall} we have $\oneforestpoly{F}-H_{F}\in \mathbb{Z}$. 
    But $\oneforestpoly{F}$ and $H_F$ are homogeneous of degree $|F|>1$ so they must be equal.
\end{proof}

\begin{cor}
\label{cor:TFG}
    For $F,G\in \indexedforests$ we have \begin{align*}\tope{F}\oneforestpoly{G}=\begin{cases}\oneforestpoly{G/F}&\text{if }G\ge F\\0&\text{otherwise.}\end{cases}\end{align*}
    In particular, $\ct \tope{F}\oneforestpoly{G}=\delta_{F,G}$.
\end{cor}
\begin{proof}
We induct on $|F|$. 
Let $i\in \qdes{F}$, and write $\tope{F}\oneforestpoly{G}=\tope{F/i}\tope{i}\oneforestpoly{G}$.
This equals $\oneforestpoly{(G/i)/(F/i)}$ if both $G\ge i$ and $(G/i)\ge (F/i)$, and $0$ otherwise. 
Now by \Cref{prop:quotients} the first part follows.

Next, note that when $G\ge F$, the polynomial $\oneforestpoly{G/F}$ is homogeneous of degree $|G/F|$. 
The only way that $\ct\oneforestpoly{G/F}$ does not vanish is if $|G/F|=0$,  implying $G=F$. 
Conversely if $G=F$ then $G/F=\emptyset$ so $\ct\tope{F}\oneforestpoly{G}=\ct \oneforestpoly{\emptyset}=\ct 1=1$.
\end{proof}

\begin{cor}
\label{cor:faithfultope}
    The $\tope{i}$ operators give a faithful representation of the monoid algebra $\mathbb{Z}[\Th]$.
\end{cor}
\begin{proof}
    We know by \Cref{prop:topethompson} that they give a representation, so it suffices to show that if $\sum a_F\tope{F}=0$ then all $a_F=0$. 
    By applying the linear combination to $\oneforestpoly{G}$ for any $G$, and then applying $\ct$, we indeed obtain 
    \begin{equation*}
        0=\sum a_F\ct \tope{F}\,\oneforestpoly{G}=\sum a_F\,\delta_{F,G}=a_G. \qedhere
    \end{equation*}
\end{proof}

\begin{prop}
\label{prop:forestZbasis}
    The forest polynomials $\{\oneforestpoly{F}:F\in \indexedforests\}$ form a $\mathbb{Z}$-basis for $\poly$, and we can write any $f\in \poly$ in this basis as
    \begin{align*}
        f=\sum (\ct \tope{F}f)\,\oneforestpoly{F}.
    \end{align*}
\end{prop}
\begin{proof}
If we can write $f\in \poly$ as $f=\sum a_F \,\oneforestpoly{F}$, then by \Cref{cor:TFG} we have $a_F=\ct \tope{F}\,\oneforestpoly{F}$.
Therefore, to conclude it suffices to establish the identity $f=\sum (\ct \tope{F}f)\oneforestpoly{F}$.
We do so by induction on $d\coloneqq \deg(f)$.

For $d=0$ the result follows by writing $f$ as a multiple of $\oneforestpoly{\emptyset}=1$.
Assume now $d>0$ and that the result holds for all polynomials of smaller degree.
As $\deg (\tope{i}f)<d$ for all $i$, we have
    \begin{align*}
        \tope{i}\sum (\ct \tope{F}f)\,\oneforestpoly{F}=\sum_{F\ge i}(\ct \tope{F}f)\,\oneforestpoly{F/i}=\sum_{G}(\ct \tope{G}\tope{i}f)\,\oneforestpoly{G}=\tope{i}f.
    \end{align*}
    Hence by \Cref{lem:kernelofall} we have
    \begin{align*}
        f-\sum (\ct \tope{F} f)\,\oneforestpoly{F}\in \bigcap_{i\ge 1}\ker(\tope{i})=\mathbb{Z}.
    \end{align*}
    So $f$ and $\sum (\ct \tope{F}f)\,\oneforestpoly{F}$ can only differ in their constant term. 
    But in fact both have the same constant term $\ct f$, so they are equal.
\end{proof}

\begin{prop}
\label{prop:simulkernel}
    A $\mathbb{Z}$-basis for $\ker(\tope{F})$ is given by $\{\oneforestpoly{G}:G\not \ge F\}$. In particular if $\mathcal{S}\subset \indexedforests$ is a family of forests, then
    \begin{align*}\bigcap_{F\in \mathcal{S}}\ker(\tope{F})=\ZZ\{\oneforestpoly{G}\suchthat G\not\ge F \text{ for all } F\in \mathcal{S}\}.\end{align*}
\end{prop}
\begin{proof}
    By \Cref{prop:forestZbasis} we know that $\{\oneforestpoly{G}:G\not\ge F\}\subset \ker(\tope{F})$ so it suffices to show that they span. Given $f\in \ker(\tope{F})$, we can write it as $f=\sum a_G\oneforestpoly{G}$, and we want to show that $a_G=0$ for all $G$ such that $G\ge F$.
    Applying $\tope{F}$ we see that
    \begin{equation}\label{eq:a_f_must_be_0}
        0=\tope{F}f=\sum_{G\ge F}a_G\,\oneforestpoly{G/F}.
    \end{equation}
    The forests $G/F$ are all distinct by \Cref{prop:rightcancellable}. Since forest polynomials are linearly independent we deduce that~\eqref{eq:a_f_must_be_0} holds if and only if $a_G=0$ for all $G$ such that $G\ge F$.
\end{proof}

\begin{cor}\label{cor:simultopeker}
For $A\subset \NN$, a $\ZZ$-basis for the subring \begin{align*}\bigcap_{i\not \in A}\ker(\tope{i})\subset \poly
\end{align*} is given by $\{\oneforestpoly{G}:G\in \ltfor{A}\}$.
\end{cor}
\begin{proof}
    This is a subring since for each $i\not\in A$ we have $\ker(\tope{i})=\ker(\frac{1}{x_i}(\rope{i+1}-\rope{i}))$ is the subalgebra of polynomials on which the two ring maps $\rope{i+1},\rope{i}:\poly\to \poly$ agree. The basis fact follows from \Cref{prop:simulkernel} and the definition of $\ltfor{A}$.
\end{proof}
We extract the special case $A=\{1,\dots,n\}$ separately for ease of citation.
\begin{prop}
\label{prop:polynbasis}
    $\{\oneforestpoly{G}\suchthat F\in\ltfor{n}\}$ is a $\mathbb{Z}$-basis for $\poly_n$.
\end{prop}


We conclude with a proposition concerning the interaction between forest polynomials and $\rope{1}$ which will be useful in our study of quasisymmetric coinvariants.

\begin{prop}
\label{prop:R1forest}
We have $\tope{G}\rope{1}^k=\rope{1}^k\tope{\tau^kG}$ and
    \begin{align*}
    \rope{1}^k\oneforestpoly{F}=
    \begin{cases}
    \oneforestpoly{\tau^{-k}F}&\text{if }\tau^{-k}F\text{ exists (i.e. $k<\min \supp(F)$)}\\0&\text{otherwise.}
    \end{cases}
    \end{align*}
\end{prop}
\begin{proof}
    It is direct to check that $\tope{i}\rope{1}=\rope{1}\tope{i+1}$. 
    So, for any $G\in \indexedforests$ with code $(c_1,c_2,\ldots)$ we have
    \begin{align*}
        \tope{G}\rope{1}^k=(\tope{1})^{c_1}(\tope{2})^{c_2}\cdots \rope{1}^k=\rope{1}^k(\tope{1+k})^{c_1}(\tope{2+k})^{c_2}\cdots =\rope{1}^k\tope{\tau^{k}G}
    \end{align*}
    since $\sfc(\tau^kG)=(0^k,c_1,c_2,\ldots).$
    Therefore \[\ct\tope{G}\rope{1}^k\oneforestpoly{F}=\ct\rope{1}^k\tope{\tau^kG}\oneforestpoly{F}=\delta_{\tau^kG,F},\] which by \Cref{prop:forestZbasis} means that $\rope{1}^k\oneforestpoly{F}$ equals $\oneforestpoly{\tau^{-k}F}$ if $\tau^{-k}F$ exists, and is $0$ otherwise.
\end{proof}
Compare the preceding result with its well-known classical analogue: $\rope{1}^k\schub{w}$ for $w\in S_{\infty}$ equals $0$ unless $w(i)=i$ for $1\leq i\leq k$, i.e. $\lcode{w}=(0^k,c_{k+1},\ldots)$, and if this holds then $\rope{1}^k\schub{w}=\schub{w'}$ with $w'(i)=w(i+k)-k$, i.e. $\lcode{w'}=(c_{k+1},\ldots)$.

\section{Positive expansions}
\label{sec:positivity}

We say that $f\in \poly$ is \emph{forest positive} if the coefficients $a_F$ in the expansion
\begin{equation*}
    f=\sum_{F\in \indexedforests} a_F\,\oneforestpoly{F}
\end{equation*}
are nonnegative integers.
If, in addition,  $a_F\in \{0,1\}$ then $f$ is \emph{multiplicity-free} forest positive.

\begin{lem}
\label{lem:forestpos}
    A polynomial $f$ is (resp. multiplicity-free) forest positive if and only if $\tope{i}f$ is (resp. multiplicity-free) forest positive for all $i$. 
\end{lem}
\begin{proof}
    If $f=\sum_F a_F\oneforestpoly{F}$, then $\tope{i}f=\sum_{i\in \qdes{F}} a_F\oneforestpoly{F/i}$ which immediately shows both forward directions. 
    Conversely, for any $F$ we have $a_F$ is the coefficient of $\oneforestpoly{F/i}$ in $\tope{i}f$ for any $i\in \qdes{F}$ which shows the reverse direction.
\end{proof}

In the remainder of this section our computations will be almost entirely formal consequences of the twisted Leibniz rule $\tope{i}(fg)=\tope{i}(f)\rope{i+1}(g)+\rope{i}(f)\tope{i}(g)$ from \Cref{lem:leibniz}, together with the following identities which may be verified by direct computation:
\begin{align}
\label{eqn:TR}
    \tope{j}\rope{i}=\begin{cases}\rope{i-1}\tope{j}&\text{if }j< i-1\\
    \rope{j+1}\tope{j}+\rope{j}\tope{j+1}&\text{if }j=i-1\\ \rope{i}\tope{j+1}&\text{if }j> i-1.\end{cases}
\end{align}
\begin{prop}
\label{prop:Rforestpositive}
For $F\in \indexedforests$ we have
    $\rope{i}\,\oneforestpoly{F}$ is multiplicity-free forest positive.
\end{prop}
\begin{proof}
Induct on $|F|$. 
By \Cref{lem:forestpos} it suffices to show that $\tope{j}\,\rope{i}\,\oneforestpoly{F}$ is multiplicity-free forest positive for all $j$.
If $j< i-1$ then by~\eqref{eqn:TR} we have
\begin{equation*}
\tope{j}\,\rope{i}\,\oneforestpoly{F}
=\rope{i-1}\tope{j}\,\oneforestpoly{F}=\delta_{j\in \qdes{F}}\rope{i-1}\,\oneforestpoly{F/j}.
\end{equation*}
which is multiplicity-free forest positive by induction.
If $j> i-1$ then we have by~\eqref{eqn:TR} 
\begin{equation*}
\tope{j}\,\rope{i}\,\oneforestpoly{F}=\rope{i}\,\tope{j+1}\,\oneforestpoly{F}=\delta_{j+1\in \qdes{F}}\rope{i}\,\oneforestpoly{F/(j+1)}.
\end{equation*}
which is multiplicity-free forest positive by induction.
Finally if $j=i-1$ then we have by~\eqref{eqn:TR} 
\begin{equation*}
\tope{j}\,\rope{i}\,\oneforestpoly{F}=\rope{j+1}\tope{j}\,\oneforestpoly{F}+\rope{j}\tope{j+1}\,\oneforestpoly{F}=\delta_{j\in \qdes{F}}\rope{j+1}\oneforestpoly{F/j}+\delta_{j+1\in \qdes{F}}\rope{j}\,\oneforestpoly{F/(j+1)}.
\end{equation*}
Noting that we cannot have both $j,j+1\in \qdes{F}$ by \eqref{eq:spacedqdes}, this is multiplicity-free forest positive by induction.
\end{proof}

The next theorem states that the basis $(\oneforestpoly{F})_{F\in\indexedforests}$ of $\poly$ has positive structure constants; this was first proved in \cite{NT_forest} with a complicated combinatorial interpretation for the coefficients.

\begin{thm}
\label{thm:forestmultpos}
    For $F,G\in \indexedforests$ we have $\oneforestpoly{F}\oneforestpoly{G}$ is forest positive.
\end{thm}
\begin{proof}
Induct on $\deg(\oneforestpoly{F}\oneforestpoly{G})=|F|+|G|$. By \Cref{lem:forestpos} it suffices to show that $\tope{i}(\oneforestpoly{F}\oneforestpoly{G})$ is forest positive for all $i$. 
By \Cref{lem:leibniz} we have
    \begin{equation*}
    \tope{i}(\oneforestpoly{F}\oneforestpoly{G})=(\tope{i}\,\oneforestpoly{F})\rope{i+1}\oneforestpoly{G}+(\rope{i}\,\oneforestpoly{F})\tope{i}\,\oneforestpoly{G}.
    \end{equation*} 
    It suffices to show that each term on the right-hand side is forest positive. We do the first, the second is similar.
    Note that $\tope{i}\,\oneforestpoly{F}$ is either $0$ or equals $\oneforestpoly{F/i}$ which is homogeneous of degree $|F|-1$. 
    Similarly, by Proposition~\ref{prop:Rforestpositive} we know that  $\rope{i+1}\oneforestpoly{G}$ is forest positive and homogeneous of degree $|G|$.
    So the result follows by applying the inductive hypothesis.
\end{proof}


Schubert polynomials are known to satisfy \emph{Monk's rule}, which shows that the Schubert expansion of $\schub{w}\schub{s_i}=\schub{w}(x_1+\cdots+x_i)$ is multiplicity-free. 
The same holds for forest polynomials.

\begin{thm}[forest polynomial ``Monk's Rule'']
\label{thm:monks}
    For $F\in \indexedforests$ we have $\oneforestpoly{\underline{i}}\,\oneforestpoly{F}=(x_i+x_{i-1}+x_{i-2}+\cdots+x_{1})\oneforestpoly{F}$ is multiplicity-free forest positive.
\end{thm}

\begin{proof}
    We induct on $|F|$.
    For $|F|=0$ the result is trivial, so assume that $|F|\ge 1$.
    Given $G\in \indexedforests$ with $|G|=|F|+1\ge 2$, we want to show that $\tope{G}(\forestpoly{\underline{i}}\,\oneforestpoly{F})\in \{0,1\}$.
    
    If there exists $j \in \qdes{G}$ with $j\ne i$, then by \Cref{lem:leibniz} we can write
    \begin{align*}
    \tope{G}(\forestpoly{\underline{i}}\,\oneforestpoly{F})=\tope{G/j}\tope{j}(\forestpoly{\underline{i}}\,\oneforestpoly{F})=\tope{G/j}(\rope{j}(\forestpoly{\underline{i}})\,\tope{j}(\oneforestpoly{F})).
    \end{align*}
    Now note from direct computation that \begin{align*}
    \rope{j}(\forestpoly{\underline{i}})=\rope{j}(x_i+x_{i-1}+x_{i-2}+\cdots+x_{1})=\begin{cases}\forestpoly{\underline{i-1}}&\text{if }j\le i\\ \forestpoly{\underline{i}}&\text{if }j\ge i+1\end{cases}\end{align*}
    and $\tope{j}(\oneforestpoly{F})=\delta_{j\in \qdes{F}}\oneforestpoly{F/j}$. 
    So we are done by induction.

    Otherwise, we have $\qdes{G}=\{i\}$. 
    As $\tope{G}(\forestpoly{\underline{i}}\,\oneforestpoly{F})=\tope{G/i}(\tope{i}(\forestpoly{\underline{i}}\,\oneforestpoly{F}))$, it remains to show that $\tope{G/i}(\tope{i}(\forestpoly{\underline{i}}\,\oneforestpoly{F}))$ is multiplicity-free.
    
    We claim that $\qdes{G/i}=\{j\}$ for $j=i-1$ or $i$. Indeed, any $k\in \qdes{G/i}$ must have $k\ge i-1$ since otherwise $k\in \qdes{G}$ as well, and now since $\qdes{G/i}\subset \{i-1,i\}$ we conclude $|\qdes{G/i}|=1$ by~\eqref{eq:spacedqdes}.

    If $\qdes{G/i}=\{i\}$ then by \Cref{lem:leibniz} and \eqref{eqn:TR} we can write $\tope{G/i}(\tope{i}(\forestpoly{\underline{i}}\,\oneforestpoly{F}))$ as
    \begin{align*}
    \tope{G/i}(\rope{i}(\oneforestpoly{F})+\forestpoly{\underline{i}}\tope{i}(\oneforestpoly{F}))=
    \tope{(G/i)/i}(\rope{i}\tope{i+1}(\oneforestpoly{F}))+\tope{G/i}(\forestpoly{\underline{i}}\,\tope{i}(\oneforestpoly{F})).
    \end{align*}
    At most one of the terms is nonzero since we cannot have both $i,i+1\in \qdes{F}$ by~\eqref{eq:spacedqdes}.
    If the first term is nonzero then we conclude since $\rope{i}\oneforestpoly{F/(i+1)}$ is multiplicity-free, and if the second term is nonzero then we conclude by induction that $\oneforestpoly{\underline{i}}\,\oneforestpoly{F/i}$ is multiplicity-free.

    The case $\qdes{G/i}=i-1$ is similar and left to the reader.
\end{proof}

Schubert polynomials also enjoy multiplicity-free \emph{Pieri rules} \cite{Sot96} corresponding to multiplication by elementary or homogeneous symmetric polynomials, which
happen to be forest polynomials for the forests with codes $(0^{p-k},1^k)$ and $(0^{p-1},k)$ respectively.
In view of this it is natural to inquire if forest polynomials have multiplicity-free Pieri rules as well. 
This is not the case in general; one finds multiplicities in low degree already.

\begin{rem}
Note that while all of the above positivity proofs unwind to give combinatorially nonnegative algorithms, it would be interesting to obtain the final coefficients directly as the answer to enumerative questions. We leave this to the interested reader. 
\end{rem}

\section{Fundamental quasisymmetrics and $\zigzag{n}$}
\label{sec:fundamental}

The $n$-Grassmannian permutations parametrize the special subclass of Schubert polynomials $\schub{w}$ known as the $n$-variable \emph{Schur polynomials}, which form a basis of $\sym{n}$.
In our story  $\zigzag{n}$ will play an analogous role to $\grass{n}$.
We will show that the associated forest polynomials $\{\oneforestpoly{F}:F\in \zigzag{n}\}$ lie in  $\qsym{n}$ and turn out to form the known basis of $\qsym{n}$ of \emph{fundamental quasisymmetric polynomials} \cite{Ges84,StThesis}. One consequence of this is that we can write down a new formula (\Cref{cor:qsymexpansion}) directly computing the coefficients of an quasisymmetric polynomial in its fundamental expansion. 
The only other direct formula for these coefficients in the literature is in the special case that $f\in\sym{n}$: Gessel \cite[Theorem 3]{Ges84} showed that these coefficients can be computed via the Hall inner product of $f$ with a ribbon skew-Schur polynomial.

  For an integer sequence $a=(a_1,\ldots,a_k)$ with $a_i\ge 1$ we define the set of \emph{compatible sequences} \begin{align*}\compatible{a}=\{(i_1,\ldots,i_k):a_j\ge i_j\ge i_{j+1},\text{ and if }a_j>a_{j+1}\text{ then }i_j>i_{j+1}\}.\end{align*}
  Given a sequence $\mathbf{i}=(i_1,\dots,i_k)$ we denote $\sfx_{\mathbf{i}}\coloneqq x_{i_1}\cdots x_{i_k}$.
  Then we define the \emph{slide polynomial} to be the generating function
    \begin{equation}
        \slide{a}=\sum_{\mathbf{i}\in \compatible{a}}\sfx_{\mathbf{i}}.
    \end{equation}
The notion of a compatible sequence appears in the Billey--Jockusch--Stanley formula for Schubert polynomials \cite{BJS93}. Our indexing conventions agree with \cite{NT_Ppart} and differ from \cite{AS17} as we use sequences instead of weak compositions.
\begin{eg}
    Consider $a=422$ wherein we have omitted commas and parentheses in writing the sequence for readability. We have
    \begin{align*}
\slide{422}&=\sfx^{(0,2,0,1)}+\sfx^{(2,0,0,1)}+\sfx^{(0,2,1,0)}+\sfx^{(2,0,1,0)}+\sfx^{(2,1,0,0)}+\sfx^{(1,1,0,1)}+\sfx^{(1,1,1,0)}.
    \end{align*}
    The corresponding $\compatible{a}$ are $
    \{422,411,322,311,211,421,321\}$.
\end{eg}

Like with forest polynomials, it is easy to check that the revlex leading monomial of $\slide{a}$ is $\sfx^{\sfc}$ where $\sfc=(c_i)_{i\in \NN}\in \nvect$ is determined by $c_i=\#\{a_j=i\suchthat 1\leq j\leq k\}$. The fundamental quasisymmetric polynomials constitute a subfamily of slide polynomials \cite[Lemma 3.8]{AS17}.

\begin{defn}  \label{def:qseq_fund}
Let $\qseq{n}$ be the sequences $(a_1,\dots,a_k)$ of positive integers satisfying $a_1=n$ and $a_i-a_{i+1}\in \{0,1\}$ for $1\le i\le k-1$.
If $(a_1,\ldots,a_k)\in \qseq{n}$ then $\slide{a}\in \poly_n$ is called a \emph{fundamental quasisymmetric polynomial}.
\end{defn}

\begin{thm}
\label{thm:qseqbij}
The  mapping $(a_1,\ldots,a_k)\mapsto F=a_k\cdots a_1$ is a bijection
$\qseq{n}\to \zigzag{n}$. Under this bijection we have $\slide{a}=\oneforestpoly{F}$.
\end{thm}
\begin{proof}
We dispense with the case that $()\mapsto \emptyset$ and assume that all sequences and forests in what follows are nonempty.

First, we show that the map is well-defined. 
By \Cref{prop:ForesttoCode} we have $\sfc(F)=(c_1,c_2,\ldots)$ with $c_i=\#\{j: a_j=i\}$. It follows  that the only $c_i\ne 0$ which has a zero in front of it in $\sfc(F)$ is $c_{a_1}=c_n$.
By \eqref{eq:qdes_criterion} this means that $\qdes{F}=\{n\}$, and thus $F\in \zigzag{n}$.

This map is injective because $\sfc(F)$ determines the sequence of $a_i$. 
To show it is surjective, we show that if we write $F\in \zigzag{n}$ as $F=a_k\cdots a_1$ with $a_1\ge \cdots \ge a_k \ge 1$ then $(a_1,\ldots,a_k)\in \qseq{n}$.
To see this, note that $\sfc(F)=(c_i)_{i\in \NN}$ has the property that $i$ satisfies $c_{i}\ne 0$ precisely when $i=a_j$ for some $j$.
Because $|\qdes{F}|=1$ we conclude by~\eqref{eq:qdes_criterion} that $\qdes{F}=\{a_1\}$, and thus $a_1=n$ since $F\in \zigzag{n}$.
 It also implies that when $a_{i+1}\ne n$ there is no zero in front of $c_{a_{i+1}}$ in $\sfc(F)$.
This implies $a_{i}-a_{i+1}\le 1$ and thus
we conclude that $(a_1,\ldots,a_k)\in \qseq{n}$.

Finally, to show that $\slide{a}=\oneforestpoly{F}$, we claim that it suffices to show that
\begin{equation*}
    \tope{j}\,\slide{a}=\delta_{j,a_1}\slide{a'}
\end{equation*}
where $a'=(a_2,\ldots,a_{k})$. Indeed, this implies that $\tope{F}\slide{a}=\tope{a_k\cdots a_1}\slide{a}=1$, and for $G\ne F\in \indexedforests$ with $G=b_k\cdots b_1$ we have
$\tope{G}\slide{a}=\tope{b_k}\cdots \tope{b_1}\slide{a}=\delta_{a,b}=0$
so we conclude by \Cref{prop:forestZbasis}.

Clearly $\tope{j}\,\slide{a}=0$ for $j\ge a_1+1$ as $\slide{a}$ only uses variables $x_1,\dots,x_{a_1}$.
Next, for $j=a_1$ we note that every element $\textbf{i}\in \mathcal{C}(a)$ has $i_1$ maximal and $i_1\le a_1$, so $\tope{a_i}\sfx_{\textbf{i}}=\frac{1}{\sfx_{a_1}}\delta_{i_1,a_1}\sfx_{\textbf{i}}$. Therefore 
\begin{align*}
    \tope{a_1}\,\slide{a}=\frac{1}{x_{a_1}}\sum_{\substack{\mathbf{i}\in \compatible{a}\\i_1=a_1}}\sfx_{\mathbf{i}}=\sum_{\mathbf{i}'\in \compatible{a'}}\sfx_{\mathbf{i}'}=\slide{a'}.
\end{align*}
Finally, because $\slide{a}$ is quasisymmetric we have by \Cref{thm:Tqsymchar} that $\tope{j}\slide{a}=0$ for $1\le j\le n-1$.
\end{proof}

    The identity $\slide{a}=\oneforestpoly{F}$ when $F\in \zigzag{n}$ also follows directly from the combinatorial definition of the forest polynomial: indeed the nodes in $\internal{F}$ form a path from the root with $c_1=\#\{j:a_j=1\}$ nodes with $\rho_F(v)=1$, followed by $c_2=\#\{j:a_j=2\}$ nodes with $\rho_F(v)=2$, etc. Then the conditions for a sequence to be in $\compatible{a}$ are easily seen to correspond bijectively to the ones for the colorings $\kappa$ in the definition of forest polynomials. We leave the easy verification to the reader.

\begin{eg}
Consider the element of $\zigzag{5}$ from Figure~\ref{fig:zigzag_eg}. The corresponding element of $\qseq{5}$ is $a=(5,4,3,3,2)$, and
the corresponding slide polynomial equals
\begin{align*}
    \slide{54332}=x_2x_3^2x_4x_5+x_1x_3^2x_4x_5+x_1x_2x_3x_4x_5+x_1x_2^2x_4x_5+x_1x_2^2x_3x_5+x_1x_2^2x_3x_4\end{align*}
Note that $\tope{5}\,\slide{54332}=x_2x_3^2x_4+x_1x_3^2x_4+x_1x_2x_3x_4+x_1x_2^2x_4+x_1x_2^2x_3=\slide{4332}$ as predicted by Theorem~\ref{thm:qseqbij}.
\end{eg}

We are now in position to identify a distinguished basis for $\qsym{n}$.

\begin{prop}
\label{prop:qsymbasis}
    $\qsym{n}$ has a $\mathbb{Z}$-basis $\{\slide{a}\suchthat a\in \qseq{n}\}$ of fundamental quasisymmetric polynomials.
\end{prop}
\begin{proof}
\Cref{thm:qseqbij} shows that $\{\oneforestpoly{G}\suchthat G\in \zigzag{n}\}$ is the set of fundamental quasisymmetric polynomials.
    We have by \Cref{thm:Tqsymchar} and \Cref{lem:kernelofall} that
    \begin{align*}\qsym{n}=\poly_n\cap \bigcap_{i=1}^{n-1}\ker(\tope{i})=\bigcap_{i\ne n}\ker(\tope{i}).\end{align*}
    By \Cref{prop:simulkernel} this equals $\mathbb{Z}\{\oneforestpoly{G}:G\in \ltfor{\{n\}}\}=\mathbb{Z}\{\oneforestpoly{G}:G\in\zigzag{n}\}$.
\end{proof}

In particular, using the $\tope{G}$ operators, for $f(x_1,\ldots,x_n)\in \qsym{n}$ we can directly extract the coefficients of the fundamental quasisymmetric expansion.

\begin{cor}
\label{cor:qsymexpansion}
    If $f(x_1,\ldots,x_n)\in \qsym{n}$ is homogeneous of degree $k$ then
    \begin{align*}
    f(x_1,\ldots,x_n)=\sum_{\mathbf{a}=(a_1,\dots,a_k)\in \qseq{n}} (\tope{\mathbf{a}}f)\,\slide{\mathbf{a}}
    \end{align*}
    where we have denoted the reverse composition $\tope{\mathbf{a}}\coloneqq \tope{a_k}\cdots \tope{a_1}$ for $\mathbf{a}=(a_1,\dots,a_k)$.
\end{cor}
\begin{proof}
    This follows from the formula in \Cref{prop:forestZbasis} and \Cref{thm:qseqbij}, since we have just shown that $f(x_1,\ldots,x_n)$ is in the $\mathbb{Z}$-span of $\{\forestpoly{G}:G\in \zigzag{n}\}$.
\end{proof}
\begin{eg}
\label{eg:fundexpansion}
    Say we want to decompose  $f(x_1,x_2,x_3)=2x_1^2x_2+2x_1^2x_3+2x_2^2x_3+x_1x_2^2+x_1x_3^2+x_2x_3^2 \in \qsym{3}$ into fundamental quasisymmetrics. 
    We track in Figure~\ref{fig:qsym_expansion_eg} the nonzero applications $\tope{i_3}\tope{i_2}\tope{i_1}f$ where $(i_1,i_2,i_3)\in \qseq{3}$, and read off $f=\slide{332}+2\slide{322}-3\slide{321}$.
    \begin{figure}[!ht]
        \centering
        \includegraphics[scale=0.8]{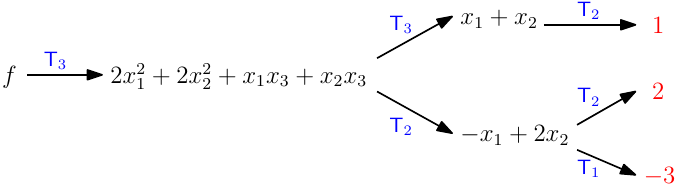}
        \caption{Trimming $f\in \qsym{3}$}
        \label{fig:qsym_expansion_eg}
    \end{figure}
\end{eg}

\section{Coinvariants}
\label{sec:coinvs}

In this section we first revisit the story of quasisymmetric coinvariants, showing that the basis of forest polynomials is perfectly adapted to their study, see~\Cref{thm:qsymidebasis}. We then describe the space of endomorphisms of $\poly_n$ that essentially commute with the multiplication by $\qsym{n}$, leading to a diagrammatic presentation of the space in the limit $n\to \infty$.

\subsection{Symmetric coinvariants}
\label{subsec:sym_coinvs}

One of the fundamental properties of the divided difference operators is that the operators $\partial_w:\poly_n\to \poly_n$ for $w\in S_n$ descend to the symmetric coinvariants $\partial_w:\coinv{n}\to \coinv{n}$. To show that $\partial_w$ descends, one shows that $\partial_i$ for $1\le i \le n-1$ stabilizes $\symide{n}$, a corollary of the fact that for $g\in \sym{n}$ and $f\in \poly_n$ that \begin{align*}\partial_i(gf)=g\partial_i(f).\end{align*}

Although usually proved by an appeal to algebraic geometry, directly from these facts one can use the usual divided difference formalism to show that the images of Schubert polynomials $\{\schub{w}\suchthat w\in S_n\}$ form a basis of $\coinv{n}$ and the images of Schubert polynomials $\{\schub{w}\suchthat w\not\in S_n\text{ and }\des{w}\subset [n]\}$ forms a basis of $\symide{n}$.
Unable to find such a proof in extant literature we include it here, if only to emphasize the parallel picture for $\qscoinv{n}$ and forest polynomials.

\begin{obs}
    $\{\schub{w}:w\in S_n\}$ forms a basis of $\coinv{n}$ and $\{\schub{w}\suchthat w\not\in S_n\text{ and }\des{w}\subset [n]\}$ forms a basis of $\symide{n}$.
\end{obs}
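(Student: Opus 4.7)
The plan is to establish the equality of $\ZZ$-modules $\symide{n} = J$, where $J \coloneqq \ZZ\{\schub{w} \suchthat w \not\in S_n,\ \des{w}\subset[n]\}$. Since $\{\schub{w}\suchthat \des{w}\subset [n]\}$ is a $\ZZ$-basis of $\poly_n$ (first bullet of \ref{intro:it1}), we have the internal direct sum $\poly_n = V\oplus J$ with $V\coloneqq \ZZ\{\schub{w}\suchthat w\in S_n\}$, and the equality $\symide{n} = J$ will then immediately yield $\coinv{n} = \poly_n/\symide{n}\cong V$ with both claimed bases.

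First I would establish $\symide{n}\subseteq J$, which also gives $V\cap\symide{n}=0$. The twisted Leibniz rule together with $\partial_i g=0$ and $s_ig=g$ for $g\in \sym{n}$, $i\in[n-1]$ yields $\partial_i(gf)=g\partial_i f$, so $\partial_i(\symide{n})\subseteq\symide{n}$ for $i\in[n-1]$, and consequently $\partial_v$ preserves $\symide{n}$ for every $v\in S_n$. Since elements of $\symide{n}$ have vanishing constant term, we get $\ct\partial_v f=0$ for $f\in\symide{n}$, $v\in S_n$. The duality $\ct\partial_w\schub{w'}=\delta_{w,w'}$ then shows that the coefficient of $\schub{v}$ in the Schubert expansion of $f$ vanishes for every $v\in S_n$, so $f\in J$. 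Applied to $f=\sum_{w\in S_n}c_w\schub{w}\in \symide{n}$, this yields $c_v=0$ for all $v$, hence $V\cap\symide{n}=0$.

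For the reverse inclusion $J\subseteq\symide{n}$, I would show $\schub{w}\in\symide{n}$ for each $w\not\in S_n$ with $\des{w}\subset[n]$ by induction on $\ell(v)$, where $w=uv$ is the Grassmannian factorization from \Cref{obs:grassmannian}. Since $w\not\in S_n$, the $n$-Grassmannian $u$ is nontrivial, so $\schub{u}=s_\lambda(x_1,\ldots,x_n)\in\symide{n}$ for some nonempty partition $\lambda$, and hence $\schub{u}\schub{v}\in\symide{n}\subseteq J$. Expanding in the Schubert basis, the product takes the form $\schub{u}\schub{v}=\schub{w}+\sum_{w'\ne w}c^{w'}_{u,v}\schub{w'}$ involving only $w'\not\in S_n$ (each of length $\ell(w)$). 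A Pieri-style analysis of this product would show each such $w'$ admits a Grassmannian factorization $u'v'$ with $\ell(v')<\ell(v)$, so $\schub{w'}\in\symide{n}$ by the inductive hypothesis, and rearranging gives $\schub{w}\in\symide{n}$.

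The main obstacle is verifying the strict inequality $\ell(v')<\ell(v)$ in the Pieri step, which requires a careful combinatorial analysis of Schubert structure constants $c^{w'}_{u,v}$ when $u$ is Grassmannian and $v\in S_n$. An alternative route that sidesteps this subtlety is a graded rank argument: in each graded degree $d$, the inclusions $V^{(d)}\hookrightarrow\coinv{n}^{(d)}$ and $\symide{n}^{(d)}\subseteq J^{(d)}$ combined with the identity $\mathrm{rk}\,V^{(d)}+\mathrm{rk}\,J^{(d)}=\mathrm{rk}\,\symide{n}^{(d)}+\mathrm{rk}\,\coinv{n}^{(d)}$ coming from the two short exact sequences force both inclusions to be equalities after tensoring with $\QQ$; promotion to $\ZZ$ then follows from the torsion-freeness of $\coinv{n}$, for instance via the classical monomial basis of $\coinv{n}$.
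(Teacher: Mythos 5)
Your first inclusion $\symide{n}\subseteq J$ is correct, and your argument is actually slicker than the paper's: you observe that $\partial_v$ preserves $\symide{n}$ for $v\in S_n$ (Leibniz), hence $\ct\partial_v f=0$ for $f\in\symide{n}$, and duality kills every $\schub{v}$-coefficient with $v\in S_n$. The paper instead reaches this inclusion less directly, by first using the triangular identity $\schub{u}\schub{v}=\schub{w}+\sum a_{v'}\schub{u'v'}$ to show that $\{\schub{v}:v\in S_n\}$ generates $\poly_n$ as a $\sym{n}$-module, and then expanding a general element of $\symide{n}$ over a spanning set of $\symide{n}^+$.

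The gap you flag in the reverse inclusion $J\subseteq\symide{n}$ is real, but it is precisely the point the paper settles, and it needs no Pieri-style analysis--only a short divided-difference computation. Write $\schub{u}\schub{v}=\sum a_{u',v'}\schub{u'v'}$, summing over Grassmannian factorizations $w'=u'v'$ with $u'\in\grass{n}$, $v'\in S_n$, $\ell(w')=\ell(u')+\ell(v')=\ell(u)+\ell(v)$. If $\ell(u')\le\ell(u)$, equivalently $\ell(v')\ge\ell(v)$, then
\begin{align*}
\partial_{u'v'}(\schub{u}\schub{v})=\partial_{u'}\partial_{v'}(\schub{u}\schub{v})=\partial_{u'}\bigl(\schub{u}\,\partial_{v'}\schub{v}\bigr)=\delta_{v,v'}\,\partial_{u'}\schub{u}=\delta_{u,u'}\delta_{v,v'},
\end{align*}
using that $\schub{u}\in\sym{n}$ and $v'\in S_n$ (so $\partial_{v'}$ commutes past $\schub{u}$), and that $\partial_{v'}\schub{v}=\delta_{v,v'}$ when $\ell(v')\ge\ell(v)$. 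So $a_{u',v'}=0$ for all $(u',v')\ne(u,v)$ with $\ell(v')\ge\ell(v)$; every remaining term has $\ell(v')<\ell(v)$, which is exactly the strict inequality you wanted, and your induction closes.

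Your rank-counting fallback does not stand on its own. With $a=\rank V^{(d)}$, $b=\rank J^{(d)}$, $c=\rank\symide{n}^{(d)}$, $e=\rank\coinv{n}^{(d)}$ (over $\QQ$), the identity $a+b=c+e$ makes the two inequalities $a\le e$ and $c\le b$ equivalent to each other, so combining them gives nothing new. You would need to know $e$ independently, e.g.\ via the classical monomial basis (equivalently the Hilbert series $[n]!_q$) of $\coinv{n}$, which you do invoke; that works, but it imports exactly the external structure the paper is deliberately avoiding, since the entire point of including this observation is to give a proof internal to the divided-difference formalism that can be transported to the quasisymmetric setting.
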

\begin{proof}
    Since $\{\schub{w}:\des{w}\subset [n]\}$ forms a basis of $\poly_n$ it suffices to show the basis statement for $\symide{n}$. 
    Consider the  factorization $w=uv$ into $v\in S_n$ and $u\in \grass{n}$ with $\ell(w)=\ell(u)+\ell(v)$ from \Cref{obs:grassmannian}.
    The key identity is that
    \begin{align}\label{eqn:symanalogue}
    \schub{u}\schub{v}=\schub{w}+\sum a_{v'}\schub{u' v'}\end{align}
    with $a_{v'}\in \mathbb{Z}$ where the sum is over pairs $(u',v')$ with $u'\in \grass{n}$ and $v'\in S_n$ such that $ \ell(u')>\ell(u)$ and $\ell(u'v')=\ell(u')+\ell(v')=\ell(w)$. This follows from noting that if $\ell(u')\le\ell(u)$ then $\ell(v')\ge \ell(v)$ and we have
    \begin{align*}
        \partial_{u'v'}(\schub{u}\schub{v})=\partial_{u'}\partial_{v'}(\schub{u}\,\schub{v})=\partial_{u'}(\schub{u}\,\partial_{v'}\schub{v})=\partial_{u'}\schub{u}\,\delta_{v,v'}=\delta_{u,u'}\delta_{v,v'}
    \end{align*}
    where in the second equality we used that $\schub{u'}\in \sym{n}$ and $v'\in S_n$.

     The identity~\eqref{eqn:symanalogue} shows upper-triangularity between $\{\schub{u}\schub{v}:v\ne \idem\}\subset \symide{n}$ and $\{\schub{w}:w=uv\text{ with }v\ne \idem\}=\{\schub{w}:w\not\in S_n, \des{w}\subset [n]\}$ which implies $\{\schub{w}:w\not\in S_n\text{ and } \des{w}\subset [n]\}\subset \symide{n}$.

    It remains to show that $\symide{n}\subset \mathbb{Z}\{\schub{w}:w\not\in S_n\text{ and }\des{w}\subset [n]\}$. 
    The identity~\eqref{eqn:symanalogue} also establishes upper-triangularity between  $\{\schub{u}\schub{v}\suchthat u\in \grass{n}\text{ and }v\in S_n\}$ and $\{\schub{w}\suchthat \des{w}\subset [n]\}$, which shows that $\poly_n$ is generated by $\{\schub{v}\suchthat v\in S_n\}$ as a $\sym{n}$-module. 
    Since we also know that $\{\schub{u}:\idem\ne u\in \grass{n}\}$ span the positive degree homogeneous symmetric polynomials, we are reduced to showing that $\schub{u}\schub{v}$ lies in $ \mathbb{Z}\{\schub{w}:w\not\in S_n\text{ and }\des{w}\subset [n]\}$ whenever $u\ne \idem$ and $v\in S_n$. 
    But this follows from~\eqref{eqn:symanalogue}.
\end{proof}
We note that our argument is reminiscent of computations in the proofs of \cite[Lemma 2.2 and Lemma 2.3]{RWY11}. The argument ibid. relies on a generalization of the factorization of a permutation used earlier to show that the corresponding Schubert polynomial lands in a certain ideal of $\poly_{n}$.

\subsection{Quasisymmetric coinvariants}
\label{subsec:quasi_coinvs}

Using the quasisymmetric divided difference formalism, we can follow a similar route.  Recall that $\qsymide{n}$ is the ideal in $\poly_n$ generated by all polynomials $f\in \qsym{n}$ with $\ct f=0$.
We define the \emph{quasisymmetric coinvariants} to be
\begin{align*}
\qscoinv{n}\coloneqq \poly_n/\qsymide{n}.
\end{align*}

We first establish the appropriate analogue of $\partial_i\in \End_{\sym{n}}(\poly_n)$ for our purposes.
\begin{prop}
\label{prop:fulltrim}
If $H\in \suppfor{n}$ and $g\in \qsym{n}$, then $\tope{H}(gh)=\rope{1}^{|H|}(g)\tope{H}(h)$ for all $h\in\poly$.
\end{prop}
\begin{proof}
    We proceed by induction on $|H|$. If $|H|=0$ then there is nothing to prove, so suppose now the result is true for all smaller $|H|$.
    Let $i\in \qdes{H}$.
    As $H\in \suppfor{n}$ we have $1\le i \le n-1$, so \Cref{thm:Rqsymchar} implies $\rope{i}(g)=\rope{1}(g)$.
    Together with  \Cref{lem:leibniz} and \Cref{thm:Tqsymchar} this implies
    \begin{align*}
    \tope{H}(gh)=\tope{H/i}\tope{i}(gh)=\tope{H/i}(\rope{i}(g)\tope{i}(h)+\rope{i+1}(h)\tope{i}(g))=\tope{H/i}(\rope{1}(g)\tope{i}(h)).
    \end{align*}
    We know that $H/i\in \suppfor{n-1}$. 
    Indeed, if there were a leaf $\ge n$ then as $i\le n-1$ this would become a leaf $\ge n+1$ in $(H/i)\cdot i=H$. 
    From the definition of $\qsym{n}$ we see that $\rope{1}(g)\in \qsym{n-1}$, and so by induction
    \begin{equation*}
    \tope{H}(gh)=\rope{1}^{(|H|-1)}(\rope{1}(g))\tope{H/i}(\tope{i}(h))=\rope{1}^{|H|}(g)\tope{H}(h).\qedhere 
    \end{equation*}
 \end{proof}
\begin{cor}
\label{cor:topedescend}
    For $F\in \suppfor{n}$ we have $\tope{F}(\qsymide{n})\subset \qsymide{n-|F|}$, and so $\tope{F}$ descends to a map 
    \begin{align*}
    \tope{F}:\qscoinv{n}\to \qscoinv{n-|F|}.
    \end{align*} 
    In particular, $\tope{1},\ldots,\tope{n-1}$ descend to maps
    \begin{align*}
    \tope{1},\ldots,\tope{n-1}:\qscoinv{n}\to \qscoinv{n-1}.
    \end{align*}
\end{cor}
\begin{proof}
    We have $\rope{1}^{|F|}(\qsym{n})\subset \qsym{n-|F|}$ from the definition of $\qsym{n}$, and $\rope{1}^{|F|}$ preserves the property of being a positive degree homogeneous polynomial, so we conclude by \Cref{prop:fulltrim} that $\tope{F}(\qsymide{n})\subset \qsymide{n-|F|}$.
\end{proof}

To state our key result \Cref{thm:qsymidebasis}, it is useful to introduce the partial map $\star$ taking a pair $(G,H)$ and returning the forest $(\Theta_n')^{-1}(G,H)$, where $\Theta_n'$ is from \Cref{thm:forestfactorization}:

 \begin{defn} Let $G\in \zigzag{n}$ and $H\in \suppfor{n}$. Then we define
     \begin{align*}G\star H\coloneqq\begin{cases}(\tau^{-|H|}G)\cdot H&\text{if }\min \supp G>|H|\text{ or }G=\emptyset\\\text{does not exist}&\text{otherwise.}\end{cases}\end{align*}
 \end{defn}

The second part of \Cref{thm:forestfactorization} can then be stated in the following equivalent form:
 \begin{cor}
 \label{cor:forestfactorization}
     Let $F\in \indexedforests$ and $n\geq 1$.
     Then $F\in\ltfor{n}$ if and only if we can write $F=G\star H$ with $G\in \zigzag{n}$ and $H\in \suppfor{n}$. In that case the decomposition $F=G\star H$ is unique: $H\le F$ is determined by having its set of internal nodes $\internal{H}\subset \internal{F}$ consist of all fully supported internal nodes of $F$, and $G=\tau^{|H|}(F/H)$.
 \end{cor}

Rather than just describe a basis for $\qsymide{n}$, we also describe bases of the ideals generated by homogeneous elements of $\qsym{n}$ of degree $\ge k$, which will be important in the next subsection. 

\begin{defn}
    For a nonnegative integer $k$, let $\mathcal{I}_{k,n}\subset \poly_n$ be the ideal generated by all homogeneous polynomials $f\in \qsym{n}$ with $\deg(f)\ge k$.
\end{defn}

We also define a subspace $\mathcal{I}_{k,n}^{\star} \subset \poly_n$ by
\begin{align*}\mathcal{I}_{k,n}^{\star}&=\bigoplus \mathbb{Z}\{\oneforestpoly{G\star H}\suchthat G\in \zigzag{n}, H\in \suppfor{n}, G\star H\text{ exists and }|G|\ge k\}\\
&=\bigoplus \mathbb{Z}\{\oneforestpoly{F}\suchthat F\in\ltfor{n}, \text{ and if } (G,H)=\Theta_n'(F) \text{ then }|G|\geq k\}.\end{align*}

Note that by  \Cref{prop:polynbasis} and \Cref{cor:forestfactorization}  we have \begin{align}
\label{eqn:I0n}
    \mathcal{I}_{0,n}^{\star}=\bigoplus \mathbb{Z}\{\oneforestpoly{F}\suchthat F\in \ltfor{n}\}=\poly_n=\mathcal{I}_{0,n}.
\end{align}
Directly from the definitions we also note 
\begin{align*}
    \mathcal{I}_{1,n}&=\qsymide{n},\text{ and }\\\mathcal{I}_{1,n}^{\star}&=\bigoplus \mathbb{Z}\{\oneforestpoly{F}\suchthat F\in \ltfor{n}\setminus \suppfor{n}\}.
\end{align*}

\begin{thm}
\label{thm:qsymidebasis}
We have $\mathcal{I}_{k,n}=\mathcal{I}_{k,n}^{\star}$ for all $k,n$. In particular for $k=1$, we get 
\begin{enumerate}[label=(\arabic*)]
    \item \label{8.8:it1} 
    $\qsymide{n}$ has a $\mathbb{Z}$-basis given by $\{\oneforestpoly{F}:F\in\ltfor{n}\setminus \suppfor{n}\}$.
    \item \label{8.8:it2} 
    $\qscoinv{n}$ has a $\mathbb{Z}$-basis given by $\{\oneforestpoly{F}:\suppfor{n}\}$.
    In particular its dimension is given by the Catalan number $\cat{n}$.
\end{enumerate}
\end{thm}

Note that \ref{8.8:it1} was shown in \cite[Corollary 4.3]{NT_forest} using a more computational approach, while \ref{8.8:it2} is the main result of \cite{ABB04}.

\begin{lem}
\label{lem:GHproduct} 
    If $G\in \zigzag{n}$ and $H\in \suppfor{n}$ then
    \begin{align*}\oneforestpoly{G}\oneforestpoly{H}-\delta_{G\star H\text{ exists}}\oneforestpoly{G\star H}\in \mathcal{I}_{|G|+1,n}^{\star}.\end{align*}
\end{lem}
\begin{proof}
    Since $\oneforestpoly{G}\oneforestpoly{H}$ is in $\poly_n$ and has degree $|G|+|H|$, its forest expansion only contains terms $\oneforestpoly{F}$ where $F\in\ltfor{n}$ and $|F|=|G|+|H|$. 
    Given such $F$, use~\Cref{cor:forestfactorization} to write
    \begin{align*}
    F=G'\star H'=(\tau^{-|H'|}G')\cdot H'
    \end{align*} with $H'\in \suppfor{n}$ and $G'\in \zigzag{n}$. 
    Thus $\oneforestpoly{G'}\in \qsym{n}$, and  so  \Cref{prop:fulltrim} yields
    \begin{align*}\tope{F}(\oneforestpoly{G}\oneforestpoly{H})=\tope{\tau^{-|H'|}G'}\tope{H'}(\oneforestpoly{G}\oneforestpoly{H})=\tope{\tau^{-|H'|}G'}(\rope{1}^{|H'|}(\oneforestpoly{G})\tope{H'}(\oneforestpoly{H})) 
    \end{align*}
    By \Cref{prop:R1forest} this vanishes unless $\tau^{-|H'|}G$ exists and in that case $\rope{1}^{|H'|}(\oneforestpoly{G})=\oneforestpoly{\tau^{-|H'|}G}$. We thus get by \Cref{cor:TFG}
    \begin{align*}
        \tope{F}(\oneforestpoly{G}\oneforestpoly{H})
        =
        \tope{\tau^{-|H'|}G'}(\oneforestpoly{\tau^{-|H'|}G}\oneforestpoly{H/H'})
    \end{align*}
    if $\tau^{-|H'|}G$ exists and $H'\geq H$, and is $0$ otherwise. 
    If $H'=H$ then necessarily $|G'|=|G|$, and so 
    \begin{equation}\tope{\tau^{-|H'|}G'}(\oneforestpoly{\tau^{-|H'|}G}\oneforestpoly{H/H'})=\tope{\tau^{-|H'|}G'}(\oneforestpoly{\tau^{-|H'|}G})=\delta_{G',G}.
    \qedhere
    \end{equation}
\end{proof}

\begin{proof}[Proof of \Cref{thm:qsymidebasis}]
\Cref{lem:GHproduct} implies that for each fixed degree $d$, the $\mathbb{Z}$-linear transformation between the degree $d$ homogeneous component of $\mathcal{I}^{\star}_{k,n}$ and \begin{align*}\mathbb{Z}\{\oneforestpoly{G}\oneforestpoly{H}:G\in \zigzag{n},H\in \suppfor{n},G\star H\text{ exists, }|G|\ge k,\text{ and }|G|+|H|=d\}\end{align*} taking $\oneforestpoly{G\star H}$ to $\oneforestpoly{G}\oneforestpoly{H}$ is strictly upper triangular and hence invertible. Therefore 
\begin{align*}\mathcal{I}^{\star}_{k,n}=\mathbb{Z}\{\oneforestpoly{G}\oneforestpoly{H}:G\star H\text{ exists and }|G|\ge k\}\end{align*}
and thus $\mathcal{I}^{\star}_{k,n}\subset \mathcal{I}_{k,n}$. 
As $\mathcal{I}^{\star}_{0,n}=\poly_n$ by~\eqref{eqn:I0n}, this shows that $\poly_n$ is spanned as a $\qsym{n}$-module by $\{\oneforestpoly{H}:H\in \suppfor{n}\}$. 

Now by \Cref{prop:qsymbasis} we also have $\{\oneforestpoly{G}:G\in \zigzag{n}\text{ and }|G|\ge k\}$ span the degree $\ge k$ homogeneous components of $\qsym{n}$ as a $\mathbb{Z}$-module.
Thus to show the inclusion $\mathcal{I}_{k,n}\subset \mathcal{I}^{\star}_{k,n}$ it suffices to show that $\oneforestpoly{G}\oneforestpoly{H}\in \mathcal{I}_{k,n}^{\star}$ whenever $|G|\ge k$ and $H\in \suppfor{n}$. 
This final statement follows from \Cref{lem:GHproduct}.
\end{proof}

As an application, consider the involution $\operatorname{rev}_n f(x_1,\ldots,x_n)=f(x_n,\ldots,x_1)$, which preserves $\qsym{n}$ and is thus an involution of $\qscoinv{n}$. 
We show that this involution interacts in a simple way with the basis $\{\oneforestpoly{F}\suchthat F\in \suppfor{n}\}$ of $\qscoinv{n}$ afforded by \Cref{thm:qsymidebasis} \ref{8.8:it2}. For such an $F$, we denote $\operatorname{mir}(F)$ the forest obtained by a vertical symmetry with respect to $\{1,\ldots,n\}$. 
\begin{prop}
\label{prop:involution}
    For $F\in\suppfor{n}$,
    \[\operatorname{rev}_n(\oneforestpoly{F})=(-1)^{|F|}\oneforestpoly{\operatorname{mir(F)}}\mod \qsymide{n}.\]
\end{prop}
\begin{proof}
By \Cref{thm:qsymidebasis} and \Cref{prop:forestZbasis}, we need to show that if $G\in\suppfor{n}$ with $|G|=|F|$, then $\tope{G}\operatorname{rev}_n(\oneforestpoly{F})=\delta_{G,\operatorname{mir}(F)}(-1)^{|G|}$.
For $1\leq j\leq n$, one has that $\rope{j}\operatorname{rev}_n=\operatorname{rev}_{n-1}\rope{n+1-j}$ as operators from $\poly_n$ to $\poly_{n-1}$, as both composite operators send $f(x_1,\ldots,x_n)$ to $f(x_{n-1},\ldots,x_j,0,x_{j-1},\ldots,x_1)$. 
For $1\leq j \leq n-1$ this then implies
\[\tope{j}\operatorname{rev}_n=\rope{j}\partial_{j}\operatorname{rev}_n=-\rope{j}\operatorname{rev}_n\partial_{n-j}=-\operatorname{rev}_{n-1}\rope{n+1-j}\partial_{n-j}=
-\operatorname{rev}_{n-1}\tope{n-j}.\]
Iterating this shows that $\tope{G}\operatorname{rev}_n=(-1)^{|G|}\operatorname{rev}_{n-|G|}\tope{\operatorname{mir}(G)}$, and applying this to $\oneforestpoly{F}$ gives the desired result as $\tope{\operatorname{mir}(G)}\oneforestpoly{F}=\delta_{\operatorname{mir}(G),F}=\delta_{G,\operatorname{mir}(F)}$.
\end{proof}
This is an analogue of the following classical fact for Schubert polynomials: if $w\in S_n$, then $\operatorname{rev}_n\schub{w}=(-1)^{\ell(w)}\schub{w_0ww_0}$ modulo $\symide{n}$ for $w_0$ the longest permutation in $S_n$. Note also that a combinatorial basis of $\qscoinv{n}$ with a similar behavior under $\operatorname{rev}_n$ was defined in~\cite{Cha05}.

\section{Harmonics}
\label{sec:harmonics}
In this section we compute a basis for the quasisymmetric harmonics in terms of the volume polynomials $V_F(\lambda)$ of certain ``forest polytopes'' $\cube_{F,\lambda}$ associated to a fully supported forest $F\in \suppfor{n}$ and a decreasing sequence $\lambda_1\ge \cdots \ge \lambda_n$. We also show that the quasisymmetric harmonics are spanned by the derivatives of the top degree quasisymmetric harmonics. This answers a question of Aval--Bergeron--Li \cite{ABL10}. 

We first introduce a perfect pairing between $\mathbb{Q}[x_1,\ldots,x_n]$ and $\mathbb{Q}[\lambda_1,\ldots,\lambda_n]$; see \cite{PS09}. 

\begin{defn}
    The $D$-pairing $\langle,\rangle_D:\mathbb{Q}[x_1,\ldots,x_n]\otimes \mathbb{Q}[\lambda_1,\ldots,\lambda_n]\to \mathbb{Q}$ is the bilinear form
    \begin{align*}
    \langle f,g\rangle_D=\mathrm{ev}_0^{\uplambda}\,  f(\deri{1},\ldots,\deri{n})\,g(\lambda_1,\ldots,\lambda_n),
    \end{align*}
    where $\deri{i}\coloneqq \frac{d}{d\lambda_i}$ and $\mathrm{ev}_0^{\uplambda}:\mathbb{Q}[\lambda_1,\ldots,\lambda_n]\to \mathbb{Q}$ is obtained by setting all $\lambda_i$ to zero.
\end{defn}

This pairing may be described alternatively as having $\langle \sfx^{\sfc},\uplambda^{\sfd}\rangle=\delta_{c,d}\,\sfc!$ where $\sfc=(c_1,\dots,c_n)$ and $\sfd=(d_1,\dots,d_n)$ are sequences of nonnegative integers, and $\sfc!\coloneqq c_1!\cdots c_n!$.

\begin{defn}
    The \emph{quasisymmetric harmonics}  are defined to be
    \begin{align*}
    \hqsym{n}\coloneqq &\{f\in \mathbb{Q}[\lambda_1,\ldots,\lambda_n]\suchthat \langle g,f\rangle_D=0\text{ for all } g\in \qsymide{n}\}\\=&\{f\in \mathbb{Q}[\lambda_1,\ldots,\lambda_n]\suchthat g(\deri{1},\dots,\deri{n})f=0\text{ for all } g\in \qsym{n}\text{ with }\ct g=0\}.
    \end{align*}
\end{defn}
The key insight is that we can translate the duality $\tope{F}\forestpoly{G}=\delta_{F,G}$  into a $D$-pairing duality $\langle \vope{F}(1),\forestpoly{G}\rangle_D=\delta_{F,G}$, where $\vope{F}$ is the $D$-pairing adjoint of $\tope{F}$.

There are two main steps that we will carry out.
\begin{enumerate}
    \item We determine the adjoint $\vope{i}$ of individual $\tope{i}$ as an integration operator.
    \item We interpret composites of these $\vope{i}$ applied to $1$ as recursively computing $V_F(\uplambda)$ in terms of $V_{F/i}(\uplambda)$.
\end{enumerate}

For technical reasons we will have to carry out these steps using the $D$-pairing between polynomials rings $\QQ[x_1,x_2,\dots]$ and $\QQ[\lambda_1,\lambda_2,\dots]$ in infinitely many variables, and then return to finitely many variables case by truncating appropriately.

\subsection{The adjoint to trimming under the $D$-pairing}
\label{subsec:adjoint_to_T}
All mentions of adjoints in the sequel are with respect to the $D$-pairing.
If $X\in \End(\QQ[x_1,x_2,\dots])$ then the adjoint $X^{\vee}\in \End(\QQ[\lambda_1,\lambda_2,\ldots])$ might not exist, but if it does then it is unique since $\langle f,g\rangle_D=0$ for all $f$ implies $g=0$.

\begin{defn}
    For $f\in \QQ[\lambda_1,\lambda_2,\ldots]$ we define
    \begin{align*}
        \rope{i}^{\vee}f\coloneqq f(\lambda_1,\ldots,\lambda_{i-1},\lambda_{i+1},\ldots),\qquad
        \vope{i}f\coloneqq \int_{\lambda_{i+1}}^{\lambda_i}f(\lambda_1,\ldots,\lambda_{i-1},z,\lambda_{i+2},\ldots) \,dz.
    \end{align*}
\end{defn}
\begin{prop}\label{prop:adjointness}
   The operators $\rope{i}$ and $\tope{i}$ are adjoint to $\rope{i}^{\vee}$ and $\vope{i}$ respectively. In symbols, for $g\in \QQ[x_1,x_2,\dots]$ and $f\in \QQ[\lambda_1,\lambda_2,\dots]$ we have
    \begin{align*}
    \langle g,\rope{i}^{\vee}f\rangle_D=\langle \rope{i}g,f\rangle_D,\qquad \langle g,\vope{i}f\rangle_D=\langle \tope{i}g,f\rangle_D.
    \end{align*}
    Consequently, for $F\in \indexedforests$ we have a well-defined operator $\vope{F}$ adjoint to $\tope{F}$ defined by
    \begin{align*}
    \vope{F}=
    \vope{i_k}\cdots \vope{i_1} \text{ for any } (i_1,\ldots,i_k)\in \Trim{F}.
    \end{align*}
\end{prop}
\begin{proof}
    The well-definedness of $\vope{F}$ follows by taking the adjoint of the equality $\tope{F}=\tope{i_1}\cdots \tope{i_k}$.
    We verify adjointness by checking it on monomials $f=\uplambda^{\sfc}$ and $g=\sfx^{\sfd}$ for $\sfc,\sfd\in \nvect$.

    For the adjointness of $\rope{i}$ and $\rope{i}^{\vee}$ we have
    $\langle \rope{i}\,\sfx^{\sfd},\uplambda^{\sfc}\rangle_D=\langle \sfx^{\sfd},\rope{i}^{\vee}\uplambda^{\sfc}\rangle_D=0$ if $c_i\ne 0$ and if $c_i=0$ then both are equal to $\sfd!\,\delta_{\sfd,\sfc'}$ where $\sfc'=(c_1,\ldots,c_{i-1},c_{i+1},\ldots)$.
    For the adjointness of $\tope{i}$ and $\vope{i}$ we have on the one hand that $\langle \sfx^{\sfd}, \vope{i}\,\uplambda^{\sfc} \rangle_D$ equals
    \begin{align*}\langle \sfx^{\sfd},\lambda_1^{c_1}\cdots \lambda_{i-1}^{c_{i-1}}\frac{\lambda_{i}^{c_i+1}-\lambda_{i+1}^{c_i+1}}{c_i+1}\lambda_{i+2}^{c_{i+1}}\cdots\rangle_D=\begin{cases}\sfc!&\text{if }\sfd=(c_1,\ldots,c_{i-1},c_i+1,0,c_{i+1},\ldots)\\-\sfc!&\text{if }\sfd=(c_1,\ldots,c_{i-1},0,c_i+1,c_{i+1},\ldots)\\0&\text{otherwise.}\end{cases}\end{align*}
    On the other hand, $\tope{i}(\sfx^{\sfd})$ is always a monomial, and is a multiple of $\sfx^{\sfc}$ exactly when $\sfd=(c_1,\ldots,c_{i-1},c_i+1,0,c_i,c_{i+1},\ldots)$ (in which case it is equal to $\sfx^{\sfc}$) or $\sfd=(c_1,\ldots,c_{i-1},0,c_i+1,c_{i+1},\ldots)$ (in which case it is equal to $-\sfx^{\sfc}$).
\end{proof}

\subsection{Volume polynomials}
\label{subsec:vol_as_harmonics}

The following family of ``forest polytopes'' shall play a crucial role for us. 
\begin{defn}
Let $F\in \indexedforests$ and let $\lambda=(\lambda_1,\lambda_2,\ldots)$ be a sequence with $\lambda_i\ge \lambda_{i+1}$ for all $i$. We define the \emph{forest polytope} $\cube_{F,\lambda}\subset \mathbb{R}^{\internal{F}}$ as the subset of assignments $\phi:\internal{F}\to \RR$ satisfying the following constraints. 
Letting $\phi_\lambda$ be the extension of $\phi$ to $\internal{F}\sqcup \supp(F)$ by setting $\phi_\lambda(i)=\lambda_i$, we have for all $v\in \internal{F}$ the inequalities
\begin{align*}
    \phi_\lambda(v_L)\ge \phi(v)\ge \phi_\lambda(v_R).
\end{align*}
\end{defn}

Figure~\ref{fig:forest_polytope} shows an $F\in \indexedforests$ as well as the inequalities along the left and right edges cutting out the polytope $\cube_{F,\lambda}$. In this case we have $\lambda_2\geq \phi(a)\geq \lambda_3$, $\lambda_4\geq \phi(c)\geq \lambda_5$, $\phi(a)\geq \phi(b)\geq \phi(c)$,
$\lambda_7\geq \phi(d)\geq \lambda_8$,
$\lambda_{11}\geq \phi(e)\geq \lambda_{12}$,
 and $\phi(e)\geq \phi(f)\geq \lambda_{13}$.

\begin{figure}[!ht]
    \centering
    \includegraphics[width=0.7\textwidth]{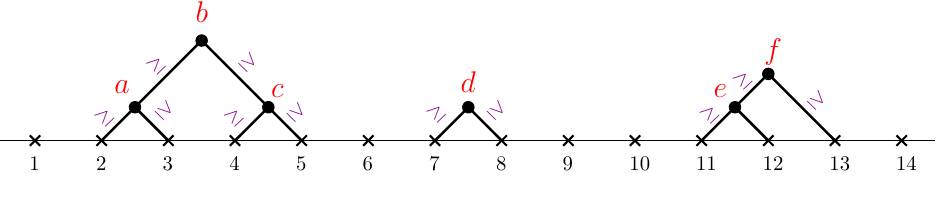}
    \caption{Inequalities defining $\cube_{F,\lambda}$ for the $F$ in Figure~\ref{fig:indexed_forest_eg}.
    \label{fig:forest_polytope}}
\end{figure}

The following lemma casts the inherent recursive structure underlying $F$ in the setting of forest polytopes.
We omit the proof as it is straightforward.
\begin{lem}
\label{lem:recursivevol}
Let $F\in \indexedforests$.
If $i\in \qdes{F}$ then the coordinate projection $\pi_v:\cube_{F,\lambda}\to [\lambda_{i+1},\lambda_i]$ has 
\begin{align*}
\pi_v^{-1}(z)=\cube_{F/i,\lambda'}
\end{align*}
where $\lambda'=(\lambda_1,\ldots,\lambda_{i-1},z,\lambda_{i+2},\ldots)$. In particular,
    \begin{align*}\vol(\cube_{F,\lambda})=\int_{\lambda_{i+1}}^{\lambda_i}\vol(\cube_{F/i,\lambda'})=\vope{i}\vol(\cube_{F/i,\lambda}).
    \end{align*}
\end{lem}

\subsection{Volumes as harmonics}
\label{subsec:vols}

\begin{defn}
    For $F\in \indexedforests$, we define the volume polynomial $V_F(\uplambda)$ associated to $F$ as $\vol(\cube_{F,\lambda})$. 
\end{defn}

The following corollary verifies that this is indeed a polynomial.

\begin{cor}\label{cor:vol_via_vope}
Let $F\in \indexedforests$. Then 
    \begin{align*}
    V_F(\uplambda)=\vope{F}(1),
    \end{align*}
    and for $f\in \poly$ we have $\langle f,V_F(\uplambda)\rangle_D=\ct \tope{F}(f)$.
\end{cor}
\begin{proof}
   Iterating \Cref{lem:recursivevol} and using that $V_\emptyset(\lambda)=1$ shows the first statement. For the second, we note that because $\vope{F}$ is adjoint to $\tope{F}$, we have
   \begin{equation*}
       \langle f,V_F(\uplambda)\rangle_D=\langle f,\vope{F}(1)\rangle_D=\langle \tope{F}f,1\rangle_D=\ct \tope{F}(f). \qedhere
   \end{equation*}
\end{proof}

As an example, for the $F$ in \Cref{fig:forest_polytope} we have $V_F(\uplambda)=\vope{11}\vope{11}\vope{7}\vope{4}\vope{2}\vope{2}(1)$, which equals
    \begin{align*}
        (\frac{1}{2}(\lambda_2^2-\lambda_3^2)(\lambda_4-\lambda_5)-\frac{1}{2}(\lambda_2-\lambda_3)(\lambda_4^2-\lambda_5^2))(\lambda_7-\lambda_8)(\frac{1}{2}(\lambda_{11}^2-\lambda_{12}^2)-(\lambda_{11}-\lambda_{12})\lambda_{13})
    \end{align*}
    The factorization of $V_F(\uplambda)$ is explained because the defining inequalities of $\cube_{F,\lambda}$ imply that we can express it as a product of forest polytopes for $G\in \indexedforests$ in shifted variable sets corresponding to the connected components of $F$.


For $n\in \NN$ consider the \emph{truncation operator} $P_n:\QQ[\lambda_1,\lambda_2,\dots]\to \QQ[\lambda_1,\dots,\lambda_n]$ defined by setting $\lambda_{i}=0$ for all $i>n$. Note that for $f\in \QQ[x_1,\dots,x_n]$ and $g\in \QQ[\lambda_1,\lambda_2,\dots]$ we have
\begin{align*}
    \langle f, g\rangle_D=\langle f, P_n(g)\rangle_D.
\end{align*}
Given a basis of homogeneous polynomials $\{g_i\}_{i\in \NN}$ for $\mathbb{Q}[\lambda_1,\ldots,\lambda_n]$, we say that a collection of homogeneous polynomials $\{h_i\}_{i\in \NN}$ in $\QQ[x_1,\ldots,x_n]$ is graded $D$-dual if $\langle h_i,g_j\rangle_D=\delta_{i,j}$. Since $\langle,\rangle_D$ is a perfect pairing when restricted to homogeneous polynomials of degree $d$ in $\mathbb{Q}[x_1,\ldots,x_n]$, the graded $D$-dual set of polynomials always exists, is unique, and is a basis for $\mathbb{Q}[x_1,\ldots,x_n]$.

Our next result, which is also a straightforward consequence of \Cref{prop:adjointness} and \Cref{cor:vol_via_vope}, shows that these volume polynomials $V_F(\uplambda)$ for $F\in \indexedforests$ are graded duals to forest polynomials. The reader should compare this result with~\cite[Corollary 12.3(2)]{PS09}.\footnote{Note that \cite[Corollary 12.3(1)]{PS09} is incorrect and the issue is highlighted in the footnote to \cite[Theorem 1.1]{hamaker2023dual}.}
\begin{thm}\label{thm:vol_as_duals_to_forests}
For all $F,G\in \indexedforests$ we have $\langle \oneforestpoly{G},V_F(\uplambda)\rangle_D=\delta_{F,G}$. 
Furthermore, the family of projected volume polynomials $\{P_nV_F(\uplambda)\}_{F\in\ltfor{n}}$ in $\QQ[\lambda_1,\ldots,\lambda_n]$ is the graded $D$-dual basis to the homogeneous basis $\{\oneforestpoly{F}\}_{F\in\ltfor{n}}$ of $\QQ[x_1,\ldots,x_n]$.
\end{thm}
\begin{proof}
By \Cref{cor:vol_via_vope} and \Cref{cor:TFG} we have \begin{align*}\langle \oneforestpoly{G},V_F(\uplambda)\rangle_D=\ct \tope{F}\oneforestpoly{G}=\delta_{F,G}.\end{align*}
    For the second part, we have by \Cref{prop:polynbasis} that $\{ \oneforestpoly{G}:G\in\ltfor{n}\}$ is a homogeneous basis for $\mathbb{Q}[x_1,\ldots,x_n]$, and $P_nV_F(\uplambda)$ are homogeneous polynomials in $\mathbb{Q}[\lambda_1,\ldots,\lambda_n]$ which satisfy $\langle \oneforestpoly{G}, P_nV_F(\lambda))\rangle_D=\langle \oneforestpoly{G}, V_F(\uplambda)\rangle_D=\delta_{F,G}$.
\end{proof}

We are ready to determine a basis for $\hqsym{n}$ in terms of volume polynomials.

\begin{thm}\label{thm:basis_harmonics}
A $\QQ$-basis for $\hqsym{n}$ is given by
    \begin{align*}
    \{V_F(\uplambda)\suchthat F\in \suppfor{n}\}.
    \end{align*}
\end{thm}
\begin{proof}
Recall by \Cref{prop:polynbasis} that $\poly_n$ has a homogeneous basis $\{\oneforestpoly{F}: F\in\ltfor{n}\}$, and by \Cref{thm:qsymidebasis} $\qsymide{n}$ has a homogeneous basis the subset $\{\oneforestpoly{F}: F\in\ltfor{n}\setminus \suppfor{n}\}$. 
As $\hqsym{n}$ is the graded $D$-orthogonal complement to $\qsymide{n}$ in $\poly_n$  and $\{P_nV_F(\uplambda)\}$ is the graded $D$-dual basis to $\{\oneforestpoly{F}: F\in\ltfor{n}\}$, we conclude that a $\mathbb{Q}$-basis for $\hqsym{n}$ is given by $\{P_nV_F(\uplambda)\suchthat F\in \suppfor{n}\}$. 
It remains to notice that $V_F(\uplambda)\in \mathbb{Q}[\lambda_1,\dots,\lambda_n]$ for $F\in \suppfor{n}$, so $P_nV_F(\uplambda)=V_F(\uplambda)$.
\end{proof}

\subsection{A conjecture of Aval--Bergeron--Li}
We now proceed to establish a generalization of a conjecture of Aval--Bergeron--Li \cite{ABL10} that posited the existence of a family of $\cat{n-1}$-many polynomials of degree $n-1$ the span of whose derivatives gave $\hqsym{n}$. We already know that the degree $n-1$ component of $\hqsym{n}$ is the top degree component and this has a basis given by the polynomials $V_F(\uplambda)$ with $F\in \suppfor{n}$ and $|F|=n-1$; there are $\cat{n-1}$ many such polynomials. We will now show that the derivatives of this top degree component of $\hqsym{n}$ span $\hqsym{n}$.

It turns out that the following proposition will formally imply the desired spanning.

\begin{prop}\label{prop:annihilator}
Let $f\in \poly_n$ be homogeneous of degree $d<n-1$, and assume that $x_1f\in \qsymide{n}$. Then we have $f\in \qsymide{n}$.
\end{prop}
\begin{proof}
  We induct on $d$. If $d=0$ then $f$ is constant. The inequality for $d$ implies $n\ge 2$ and $x_1=\forestpoly{\underline{1}}\not\in \qsymide{n}$ by Theorem~\ref{thm:qsymidebasis}\ref{8.8:it2} since $\underline{1}\in \suppfor{n}$. Thus we must have $f=0$.

   Assume now $d>0$, and write $f=\sum_{F} a_F\oneforestpoly{F}$ with $F\in\ltfor{n}$, $|F|=d$ following \Cref{prop:polynbasis}. By Theorem~\ref{thm:qsymidebasis}\ref{8.8:it2} we can assume that    $f=\sum_{F} a_F\oneforestpoly{F}$ with $F\in\suppfor{n}$, and we now want to show that $f$ is zero. Fix any $2\le i \le n-1$, so that $\tope{i}(x_1)=0$ and $\rope{i+1}(x_1)=1$. By \Cref{lem:leibniz} and \Cref{cor:topedescend} we have \begin{equation*}
      \tope{i}(x_1f)=x_1\sum_{\substack{F\in \suppfor{n}\\ i \in\qdes{F}}}a_F\,\oneforestpoly{F/i}\in \qsymide{n-1}.
   \end{equation*}
   By induction, for this to happen the sum must vanish in $\qscoinv{n-1}$. But the $F/i$ are distinct forests in $\suppfor{n-1}$, so by \Cref{thm:qsymidebasis} this implies that $a_F=0$ for any $F$ such that $i\in \qdes{F}$.
    
    There remains the case where $F$  satisfies $\qdes{F}= \{1\}$. There is a unique such $F\in\suppfor{n}$, namely $F=\underline{1}^{d}$, and $\oneforestpoly{\underline{1}^{d}}=x_1^{d}$. But then $x_1\forestpoly{\underline{1}^{d}}=x_1^{d+1}=\forestpoly{\underline{1}^{d+1}}$ and $\underline{1}^{d+1}\in \suppfor{n}$ as $d+1\le n-1$, so does not lie in $\qsymide{n}$ by Theorem~\ref{thm:qsymidebasis}\ref{8.8:it1}.
\end{proof}

\begin{lem}\label{lem:elementary_lem}
    Let $g_1,\ldots,g_r,h\in \QQ[\lambda_1,\dots,\lambda_n]$ be homogeneous polynomials with $\deg(g_i)=k$ for $1\leq i\leq r$ and $\deg(h)=d\le k$.
    Assume that for any homogeneous polynomial $f\in \QQ[x_1,\dots,x_n]$ of degree $d$ such that \begin{align*}
    f(\deri{1},\dots,\deri{n})g_1=\cdots = f(\deri{1},\dots,\deri{n})g_r=0
    \end{align*} 
    we have $\langle f,h\rangle_D=0$. Then $h$ lies in the span $W$ of $\{\deri{1}^{c_1}\cdots \deri{n}^{c_n}g_i:c_1+\cdots+c_n=k-d,1\le i \le r\}$.
\end{lem}
\begin{proof}
First, we note that for any homogeneous polynomials $g$ of degree $k$ and $f$ of degree $d$ 
\begin{align*}
    f(\deri{1},\dots,\deri{n})g=0 \Longleftrightarrow \langle f(x_1,\ldots,x_n),\deri{1}^{c_1}\cdots \deri{n}^{c_n}g\rangle_D=0 \text{ whenever }\sum c_i=k-d.
\end{align*}
Indeed, this follows from the identity
\begin{align*}\langle f,\deri{1}^{c_1}\cdots \deri{n}^{c_n}g\rangle_D=&\langle \sfx^{\sfc},f(\deri{1},\ldots,\deri{n})g\rangle_D=\sfc!\,[\uplambda^{\sfc}](f(\deri{1},\ldots,\deri{n})g).\end{align*}

Applying this equivalence to each $g_i$, we have reduced to showing that if for all homogeneous degree $d$ polynomials $f\in \mathbb{Q}[x_1,\ldots,x_n]$ we have $\langle f,W\rangle_D=0\implies \langle f,h\rangle_D=0$, then $h\in W$. But this follows from the fact that the $D$-pairing on homogeneous degree $d$ polynomials is perfect.
\end{proof}

\begin{thm}
    $\hqsym{n}$ is spanned by the derivatives of the homogeneous degree $n-1$ elements of $\hqsym{n}$.
\end{thm}
\begin{proof}
Let $h\in \hqsym{n}$ be of degree $d\le n-1$. By \Cref{lem:elementary_lem}, it suffices to show that for all homogeneous $f\in \QQ[x_1,\ldots,x_n]$ of degree $d$ such that $\langle f,h\rangle_D\ne 0$, there exists $g\in \hqsym{n}$ of degree $n-1$ such that $f(\deri{1},\ldots,\deri{n})g\ne 0$.

Fix such an $f$. 
Since $h\in \hqsym{n}$ and $\langle f,h\rangle_D\ne 0$, we have $f\not\in \qsymide{n}$. 
By ~\Cref{prop:annihilator} this implies $x_1^{n-1-d}f\not \in \qsymide{n}$. 
Thus there exists $g\in \hqsym{n}$ homogeneous of degree $n-1$ such that $\langle x_1^{n-1-d}f, g\rangle \neq 0$,  and thus $f(\deri{1},\dots,\deri{n})g \ne 0$.
\end{proof}

\subsection{Volume polynomials into monomials and monomials into forests}
\label{subsec:volumes_explicit}

We now describe the explicit expansion for $V_F(\uplambda)$ for $F\in \indexedforests$ in the basis of normalized monomials $\frac{\uplambda^{\sfc}}{\sfc!}$, and the expansions of monomials $\sfx^{\sfc}$ into the basis of forest polynomials.  

Let $\paths{F}$ denote the set of functions $\mathcal{P}:\internal{F}\to \{L,R\}$. 
By taking the union of edges $\bigcup_{v\in \internal{F}}\{v,v_{\mathcal{P}(v)}\}$ where $\{x,y\}$ denotes the edge joining $x$ and $y$, we can encode $\mathcal{P}\in \paths{{F}}$ as a collection of vertex disjoint paths travelling up from the leaves of $\widehat{F}$ which cover every node in $\internal{F}$. 
For each $\mathcal{P}$, we let $d(\mathcal{P})\coloneqq (d_i)_{i\in \NN}\in \nvect$ where $d_i$ records the length of the path that has one endpoint at leaf $i$. 
It is easy to see that $d$ is injective, and for $\mathcal{P}$ the constant $L$-function we have $d(\mathcal{P})=\sfc(F)$.

For example, Figure~\ref{fig:f_vol} shows an $F\in \indexedforests$ with the corresponding ${F}$ obtained by omitting the dotted edges. 
If we take the collection $\mathcal{P}$ of paths determined by the edges highlighted in blue, then we get $d(\mathcal{P})=(0,0,1,2,0,0,1,0,0,0,0,2,0,\dots)$. 

Given $\sfc\in \nvect$ we define $\epsilon_F(\sfc)$ as follows:
\begin{align*}   \epsilon_F(\sfc)=\left \lbrace \begin{array}{ll}  (-1)^{|\mathcal{P}^{-1}(R)|} & \text{if there exists $\mathcal{P}\in \paths{\Bin{F}}$ such that $d(\mathcal{P})=\sfc$} \\0 & \text{otherwise. } \end{array}\right.
\end{align*}

\begin{figure}[!ht]
    \centering
    \includegraphics[scale=0.75]{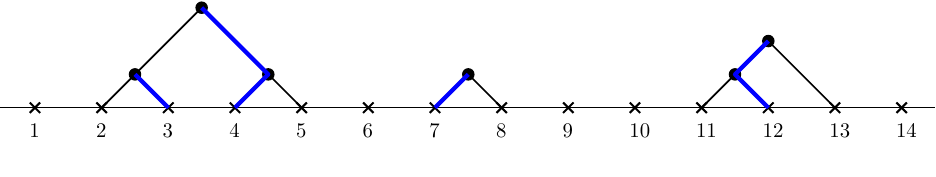}
    \caption{A forest $F\in \indexedforests$  with a path $\mathcal{P}\in \paths{\widehat{F}}$ colored in blue}
    \label{fig:f_vol}
\end{figure}

With this notation in hand we have

\begin{prop}\label{prop:explicit_volume}
    For $F\in \indexedforests$ we have $$V_F(\uplambda)=\sum_{\sfc\in \nvect}\epsilon_F(\sfc) \,\frac{\uplambda^{\sfc}}{c!}.$$
\end{prop}
\begin{proof}
We proceed by induction on $|F|$.
If $F=\emptyset$ then $V_F(\uplambda)=1$ so the formula is true, and we may now assume $|F|\ge 1$.
By \Cref{lem:recursivevol} we have
\begin{align}\label{eq:vol_initial}
V_F(\uplambda)=\vope{i}\,V_{F/i}(\uplambda)=\sum_{\sfd\in \nvect}\epsilon_{F/i}(\sfd)\,\vope{i}\,\frac{\lambda^{\sfd}}{\sfd!}.\end{align}
Given $d\in \nvect$ define compositions
\begin{align*}
    \operatorname{left}(\sfd)&=(d_1,\dots,d_{i-1},d_{i}+1,0,d_{i+1},\dots)\\
    \operatorname{right}(\sfd)&=(d_1,\dots,d_{i-1},0,d_{i}+1,d_{i+1},\dots).
\end{align*}
Then the last term of~\eqref{eq:vol_initial} can be rewritten as
\begin{equation}\label{eq:vol_final}
    V_F(\uplambda)=\sum_{\sfd\in \nvect}\epsilon_{F/i}(\sfd) \,\left(\frac{\uplambda^{\operatorname{left}(\sfd)}}{(\operatorname{left}(\sfd))!}-\frac{\uplambda^{\operatorname{right}(\sfd)}}{(\operatorname{right}(\sfd))!} \right),
\end{equation}
It is then straightforward to check that the first summand (resp. second summand) on the right-hand side of~\eqref{eq:vol_final} tracks the contribution of those paths $\mathcal{P}\in \paths{F}$ using the left (resp. right) leaf of the newly created internal node in $F$.
\end{proof}


As an application we obtain the forest expansion of monomials.
\begin{prop}\label{prop:monomials_to_forests}
   For  $\sfc\in \nvect$ we have \[\sfx^{\sfc}=\sum_{G\in \indexedforests} \epsilon_G(c)\,\forestpoly{G}.\]
\end{prop}
\begin{proof}
    We have the sequence of equalities
    \begin{align*}\ct\tope{G}\,\sfx^{\sfc}=\langle \tope{G}\,\sfx^{\sfc},1\rangle_D=\langle \sfx^{\sfc},\vope{G}(1)\rangle_D=\langle \sfx^{\sfc},V_G(\uplambda)\rangle_D=\epsilon_G(\sfc).\end{align*}
    by \Cref{prop:adjointness}, \Cref{cor:vol_via_vope}, and \Cref{prop:explicit_volume} applied in succession.
\end{proof}

Figure~\ref{fig:kostka_Eg} shows the three indexed forests that contribute to the expansion of  $x_2^2x_3$ as per \Cref{prop:monomials_to_forests}. The leftmost tree has code precisely the exponent vector of this monomial. Explicitly we have $x_2^2x_3=\oneforestpoly{F}-\oneforestpoly{G}-\oneforestpoly{H}$.
\begin{figure}[!ht]
    \centering
    \includegraphics[scale=0.6]{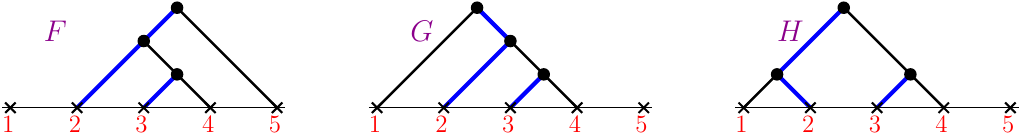}
    \caption{Three indexed forests that contribute to the monomial $x_2^2x_3$}
    \label{fig:kostka_Eg}
\end{figure}
\begin{thm}
For $F\in \indexedforests$ we have
    $V_F(\uplambda)\in \QQ[\lambda_1-\lambda_{2},\lambda_2-\lambda_{3},\ldots]$.
    The coefficients in 
    \begin{align*}V_F(\uplambda)=\sum_{\sfc =(c_1,c_2,\dots) \in \nvect} b_\sfc\,\prod_{i\ge 1}(\lambda_i-\lambda_{i+1})^{c_i}\end{align*}
    satisfy $b_\sfc=\frac{1}{\sfc!}\tope{F}\left(\prod_{i\ge 1}(\forestpoly{i})^{c_i}\right)\ge 0$.
\end{thm}
\begin{proof}
    The fact that $V_F(\uplambda)\in \QQ[\lambda_1-\lambda_{2},\lambda_2-\lambda_{3},\ldots]$ can be verified inductively by checking that $\vope{i}$ preserves this ring and noting $V_F(\uplambda)=\vope{F}(1)$ by \Cref{cor:vol_via_vope}. 
    Noting that $\oneforestpoly{i}=x_i+x_{i-1}+\cdots +x_1$, it is straightforward to check that
    \begin{equation}
   \left \langle \prod_{i\ge 1} (\forestpoly{i})^{c_i},\,\prod_{i\ge 1} (\lambda_i-\lambda_{i+1})^{d_i}\right\rangle_D=\delta_{\sfc, \sfd}\,\sfc!,\end{equation}
    and  so
    \begin{align*}
    b_\sfc=\frac{1}{\sfc!}\left\langle \prod_{i\ge 1}(\forestpoly{i})^{c_i},V_F(\uplambda)\right\rangle_D
    =\frac{1}{\sfc!}\ct\tope{F}\left(\prod_{i\ge 1}(\forestpoly{i})^{c_i}\right).
    \end{align*}
    Finally, $b_\sfc\ge 0$ since $\prod_{i\ge 1}(\forestpoly{i})^{c_i}$ is forest positive by \Cref{thm:forestmultpos} or \Cref{thm:monks}.
\end{proof}
   We note that \cite[\S 6.3]{NT_forest} may be formulated as giving a combinatorial interpretation for the coefficients $\tope[1]{F}(x_1^{c_1}(x_1+x_2)^{c_2}\cdots)=\tope[1]{F}((\oneforestpoly{1})^{c_1}(\oneforestpoly{2})^{c_2}\cdots)$.

\section{Endomorphisms of polynomials $\rope{1}$-commuting with quasisymmetrics}
\label{sec:quasisymmetric_nil_hecke}
Recall (cf. \cite{Man01}) that the ring $\End_{\sym{n}}(\poly_n)$ is generated by the operations of (multiplication by) $x_i$ and $\partial_i$, and in fact
\begin{align*}\End_{\sym{n}}(\poly_n)=\bigoplus_{w\in S_n}\poly_n\partial_w.\end{align*}
Taking the limit of these algebras we obtain the subalgebra of $\End(\poly)$ generated by all $x_i$ and $\partial_i$, which decomposes as $\bigoplus_{w\in S_{\infty}}\poly_n\partial_w$. This may be informally thought of as those endomorphisms of $\poly$ which modify only finitely many coordinates and commute, in an appropriate sense, with symmetric power series.

For quasisymmetrics we have in stark contrast the following observation.

\begin{obs}
    $\End_{\qsym{n}}(\poly_n)=\poly_n$.
\end{obs}
\begin{proof}
If $\Phi\in \End_{\qsym{n}}(\poly_n)$ then because $(x_1\cdots x_n)f$ is quasisymmetric for all $f\in \poly_n$ we have \begin{align*}x_1\cdots x_n\Phi(f)=\Phi((x_1\cdots x_n)f)=x_1\cdots x_nf\Phi(1),\end{align*} implying that $\Phi(f)=f\Phi(1)$.
\end{proof}

To find the correct analogue we have to consider $\Hom_{\qsym{n}}(\poly_n,\poly_{n-k})$ where $\poly_{n-k}$ is considered as a $\qsym{n}$-module by the map $\rope{1}^k|_{\qsym{n}}:\qsym{n}\to \qsym{n-k}$. 
This is well-defined directly from the definition of $\qsym{n}$. We note that $\rope{1}^k|_{\qsym{n}}=\rope{n-k+1}^k|_{\qsym{n}}$ by \Cref{thm:Rqsymchar}: it is the map setting $x_n=\cdots = x_{n-k+1}=0$.
\begin{thm}
\label{thm:homfrompolytopolyn-k}
We have
    \begin{align*}\Hom_{\qsym{n}}(\poly_n,\poly_{n-k})=\bigoplus_{H\in \suppfor{n}\text{ with }|H|\le k} \poly_{n-k}\rope{1}^{k-|H|}\,\tope{H}.\end{align*}
\end{thm}

\begin{proof}
For clarity we let $\mathbb{H}_{n,k}\coloneqq \Hom_{\qsym{n}}(\poly_n,\poly_{n-k})$.
    First, we show that $\rope{1}^{k-|H|}\tope{H}\in \mathbb{H}_{n,k}$. 
    By \Cref{prop:fulltrim} we have $\tope{H}\in \mathbb{H}_{n,|H|}$ for any $H\in \suppfor{n}$. 
    Next, $\rope{1}^{k-|H|}$ lies in $
    \mathbb{H}_{n-|H|,k-|H|}$ because for $f\in \qsym{n}$ and $g\in \poly_n$ we have $\rope{1}^{k-|H|}(fg)=\rope{1}^{k-|H|}(f)\rope{1}^{k-|H|}(g)$.

    We now construct functions $\{\Psi_{H}:H\in \suppfor{n}\text{ and }|H|\le k\}\subset \mathbb{H}_{n,k}$  such that $\Psi_{H}(\oneforestpoly{H'})=\delta_{H,H'}$ for all $H'\in \suppfor{n}$  with $|H'|>|H|$, and
    \begin{align*}\Psi_{H}=\rope{1}^{k-|H|}\,\tope{H}-\sum_{|H'|>|H|,\text{ }H'\in \suppfor{n}} b_{H,H'}(x_1,\ldots,x_{n-k})\,\Psi_{H',k}.\end{align*}
    We do this by backwards induction on $|H|$. For $|H|=k $ we take $\Psi_{H}=\rope{1}^{k-|H|}\,\tope{H}$. Otherwise, $\rope{1}^{k-|H|}\tope{H}(\oneforestpoly{H})=1$ and  $\rope{1}^{k-|H|}\tope{H}(\oneforestpoly{H'})=0$ when $H\ne H'\in \suppfor{n}$ and $|H'|\le |H|$, so we can take $b_{H,H'}=\rope{1}^{k-|H|}\tope{H}\oneforestpoly{H'}$.

    As the $\poly_{n-k}$-linear transformation expressing $\{\rope{1}^{k-|H|}\tope{H}:H\in \suppfor{n}\text{ and }|H|\le k\}$ in terms of $\{\Psi_{H}:H\in \suppfor{n}\text{ and }|H|\le k\}$ is  invertible by upper-triangularity, it suffices to show that $\{\Psi_{H}:H\in \suppfor{n}\text{ and }|H|\le k\}$ is a $\poly_{n-k}$-basis for $\mathbb{H}_{n,k}$.
    The $\Psi_H$ are $\poly_{n-k}$-linearly independent: if $\sum f_H(x_1,\ldots,x_{n-k})\Psi_H=0$ then for all $H'$ applying the left hand side to $\oneforestpoly{H'}$ shows that $f_{H'}=0$. 

    It remains to show that the $\Psi_H$ span. Let $\Phi\in \mathbb{H}_{n,k}$. Define 
    \begin{equation*}
    \Phi'=\Phi-\sum_{H\in \suppfor{n}} \Phi(H)\,\Psi_{H}. \end{equation*}
    We want to show $\Phi'=0$. 
    Already $\Phi'(\oneforestpoly{H})=0$ for all $H\in\suppfor{n}$ with $|H|\le k$ by the properties of the endomorphisms $\Psi_{H'}$. It remains to show $\Phi'(\oneforestpoly{H})=0$ for those $H\in \suppfor{n}$ with $|H|>k$: this is enough since the $\oneforestpoly{H}$ for $H\in \suppfor{n}$  generate $\poly_n$ as an $\qsym{n}$-module by \Cref{thm:qsymidebasis}.

    We induct on $|H|$. 
    We assume that $|H|>k $. 
    Because $H\in \suppfor{n}$ we know that $|H|\le n$ so we may assume that $k<n$. 
    Choose any $G\in \zigzag{n}$ with $\min \supp G=k+1$: these always exist as is readily checked. 
    Since $\min \supp G \le |H|$ we have that $G\star H$ does not exist and thus 
 $\oneforestpoly{G}\oneforestpoly{H}\in \mathcal{I}^{\star}_{|G|+1,n}$ by \Cref{lem:GHproduct}. By  \Cref{thm:qsymidebasis} we know that $\mathcal{I}^{\star}_{|G|+1,n}=\mathcal{I}_{|G|+1,n}$ so we may write \begin{equation*}
 \oneforestpoly{G}\oneforestpoly{H}=\sum g_{i}(x_1,\ldots,x_n)h_i(x_1,\ldots,x_n)\end{equation*}
    with $g_{i}\in \qsym{n}$ with $\deg g_i\ge |G|+1$ and therefore with $\deg h_i =|G|+|H|-\deg g_i<|H|$.

    Since $\{\oneforestpoly{H}:H\in \suppfor{n}\}$ is a $\mathbb{Z}$-basis of  $\qscoinv{n}$ by \Cref{thm:qsymidebasis} we may further assume that each $h_i=\oneforestpoly{H_i'}$ for some $H_i'\in \suppfor{n}$, and $|H_i'|=\deg \oneforestpoly{H_i'}<|H|$. We therefore have
    \begin{align*}
    (\rope{1}^{k}\,\oneforestpoly{G})\,\Phi'(\oneforestpoly{H})=\Phi'(\oneforestpoly{G}\,\oneforestpoly{H})=\sum \Phi'( g_i\,\oneforestpoly{H'_i})=\sum \rope{1}^{k}(g_i)\,\Phi'(\oneforestpoly{H'_i})=0.
    \end{align*}
    Since $\min\supp G=k+1$ we have $\rope{1}^k\,\oneforestpoly{G}\ne 0$ by \Cref{prop:R1forest}.
    So $\Phi'(\oneforestpoly{H})=0$ as desired.
\end{proof}


\begin{rem}
\label{rem:quasinil}
The limiting object 
\begin{align*}\bigoplus_k\lim_{n\to \infty}\Hom_{\qsym{n}}(\poly_{n},\poly_{n-k})=\bigoplus_{F\in \indexedforests}\poly \rope{1}^a\,\tope{F}\end{align*}
is the subalgebra of $\End(\poly)$ generated by all $x_i$, $\rope{1}$, and $\tope{i}$. This may be informally thought of as those endomorphisms of $\poly$ which act on all but finitely many coordinates as $x_i\mapsto x_{i-k}$ and commute (in an appropriate sense) with quasisymmetric power series.
\end{rem}


\subsection{Quasisymmetric nil-Hecke Algebra}
\label{subsec:quasi_nil}

The nil-Hecke algebra is the noncommutative algebra with generators denoted $x_1,x_2,\ldots$ and $\partial_1,\partial_2,\ldots$, modulo the relations
\begin{itemize}
 \item (Comm.) $x_ix_j=x_jx_i$ for all $i,j$, $\partial_i\partial_j=\partial_j\partial_i$ for $|i-j|\ge 2$, and $x_i\partial_j=\partial_jx_i$ for $j\not\in \{i-1,i\}$.
\item (Braid) $\partial_i\partial_{i+1}\partial_i=\partial_{i+1}\partial_i\partial_{i+1}$
\item (Nil-Hecke) $\partial_i^2=0$.
\item (Leibniz) $\partial_i x_i=x_{i+1}\partial_i+\idem$ and $\partial_ix_{i+1}=x_i\partial_i-\idem$.
\end{itemize}
Using these relations it is easy to straighten any combination of $x_i$ and $\partial_i$ into a $\poly$-linear combination of operators $\partial_w$ for $w\in S_{\infty}$, and this can be used to show that the nil-Hecke algebra is isomorphic to $\End_{\sym{n}}(\poly_n)$.

This also affords a ``diagrammatic presentation'', encoded by the additional relations needed to specify the presentation beyond the formal commutation relations coming from the fact that for $Z\in \{\partial,x\}$ ($x:\poly_1\to \poly_1$ representing the ``multiplication by $x$'' map), we have $Z_i=\idem^{\otimes i-1}\otimes Z\otimes \idem^{\otimes \infty}:\poly\to \poly$, where we view $\poly=\poly_1^{\otimes \infty}$. This is  given by
\begin{itemize}
    \item (Braid) $(\partial\otimes \idem)(\idem\otimes \partial)(\partial\otimes \idem)=(\idem\otimes \partial)(\partial\otimes \idem)(\idem\otimes \partial)$
    \item (nil-Hecke) $\partial^2=0$
    \item (Leibniz) $\partial (x\otimes \idem)=(\idem \otimes x)\partial+\idem^{\otimes 2}$ and $\partial(\idem \otimes x)=(x\otimes \idem)\partial-\idem^{\otimes 2}$.
\end{itemize}
If we represent $x$ and $\partial$ as in Figure~\ref{fig:xideli} and represent $F\circ G$ by stacking the diagram for $F$ on top of the diagram for $G$, the relations above can be depicted as in Figure~\ref{fig:relations_sym}.

\begin{figure}[!ht]
    \centering    \includegraphics[scale=0.8]{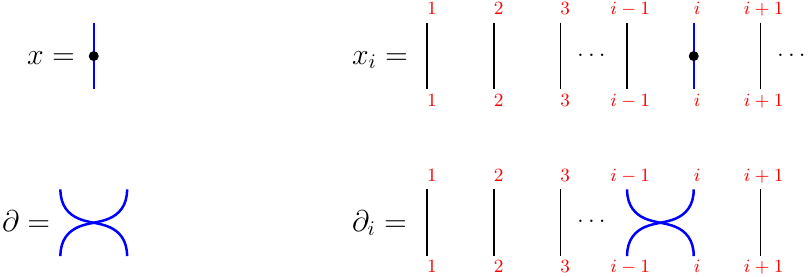}
    \caption{Diagram generators for the nil-Hecke algebra}
    \label{fig:xideli}
\end{figure}

\begin{figure}[!ht]
    \centering    \includegraphics[scale=0.8]{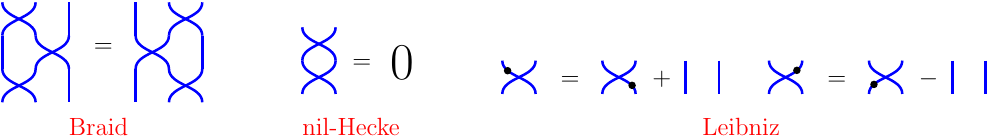}
    \caption{Diagram relations for the nil-Hecke algebra}
    \label{fig:relations_sym}
\end{figure}

As noted in \Cref{rem:quasinil} the algebra in $\End(\poly)$ generated by $\rope{1}$, $\tope{i}$ and $x_i$ may be thought of as the quasisymmetric analogue of the nil-Hecke algebra.
\begin{thm}
\label{thm:quasisymnilhecke}
    The algebra in $\End(\poly)$ generated by $\rope{1}$, $\tope{i}$ and $x_i$ has relations generated by
    \begin{enumerate}[label=(\roman*)]
        \item\label{comm_relations} (Comm.) $\tope{i}\rope{1}=\rope{1}\tope{i+1}$, for $i\ge 1$, $\rope{1}x_i=x_{i-1}\rope{1}$ for $i>1$, $x_ix_j=x_jx_i$ for all $i,j$\\$\tope{i}x_j=x_{j}\tope{i}$ if $j<i$ and $\tope{i}x_j=x_{j-1}\tope{i}$ if $j>i+1$\\
        $\tope{i}\tope{j}=\tope{j}\tope{i+1}$ for $i>j$,
        \item \label{vanishing_relations} $\rope{1}x_1=0$,
        \item \label{last_of_these} $\tope{i}x_i=\rope{1}+x_1\tope{1}+\cdots + x_i\tope{i}$ and $\tope{i}x_{i+1}=-(\rope{1}+x_1\tope{1}+\cdots + x_{i-1}\tope{i-1})$
    \end{enumerate}
\end{thm}
\begin{proof}
    All of these relations are easy to verify directly. 
    For~\ref{last_of_these}, we note by \Cref{lem:leibniz} that $\tope{i}(x_if)=\rope{i+1}f$ and $\tope{i}(x_{i+1}f)=-\rope{i}f$, and then the expressions are obtained by telescoping the identity $x_j\tope{j}=\rope{j+1}-\rope{j}$.

    Using these relations one can straighten any composition of $\rope{1},\tope{i}$ and $x_i$ into a $\poly$-linear combination of $\rope{1}^a\tope{F}$. It follows from \Cref{thm:homfrompolytopolyn-k} that there are no further relations.
\end{proof}

As the proof shows, the relations in \ref{thm:quasisymnilhecke}\ref{last_of_these}  could be simplified if we included redundant generators $\rope{i}$ in the presentation.

The quasisymmetric nil-Hecke algebra also admits a diagrammatic presentation. Note that for $Z\in \{\tope{},\rope{}\}$ we again have $Z_i=\idem^{\otimes i-1}\otimes Z\otimes \idem^{\otimes \infty}$ where $\tope{}:\poly_2\to \poly_1$ is the operator introduced in \Cref{eqn:topedef} and $\rope{}:\poly_1\to \poly_0$ is the operator $\rope{}(f)=f(0)$.

\begin{figure}[!ht]
    \centering
    \includegraphics[scale=0.8]{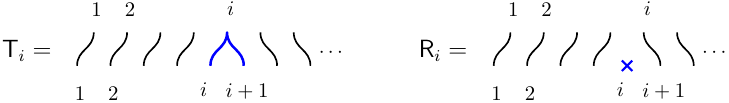}
    \caption{Diagram generators for the quasisymmetric nil-Hecke algebra} 
    \label{fig:TRx_diagrams}
\end{figure}
\begin{cor}
    The diagrammatic presentation of the quasisymmetric nil-Hecke algebra is given by the following.
    \begin{enumerate}[label=(\roman*)]
        \item \label{first_diagrammatic} $\rope{}x=0$
        \item \label{second_diagrammatic} $\tope{}(x\otimes \idem)=\idem\otimes \rope{}$ and $\tope{}(\idem \otimes x)=-\rope{}\otimes \idem$
        \item \label{third_diagrammatic} $x\tope{}=\idem\otimes \rope{}-\rope{}\otimes \idem$
    \end{enumerate}
    
    \begin{figure}[!ht]
    \centering
    \includegraphics[scale=0.8]{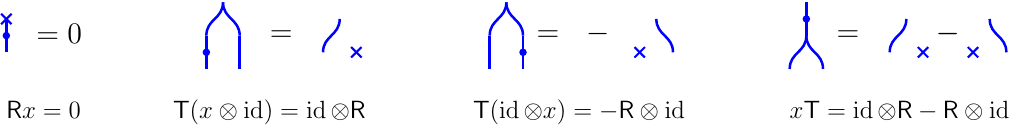}
    \caption{Diagram relations for the quasisymmetric nil-Hecke algebra}
    \label{fig:TRx_diagram_relations}
\end{figure}
\end{cor}
\begin{proof}
   First, it is immediate to verify that ~\ref{first_diagrammatic}--\ref{third_diagrammatic} are satisfied in the quasisymmetric nil-Hecke algebra. Conversely, the relations in~\Cref{thm:quasisymnilhecke}\ref{comm_relations} are all ``trivially'' satisfied since the operators act on distinct sets of variables. Also, ~\ref{first_diagrammatic} implies $\rope{1}x_1=0$ which is~\Cref{thm:quasisymnilhecke}\ref{vanishing_relations}.
    It remains to show the relations in~\Cref{thm:quasisymnilhecke}\ref{last_of_these}. 
    Using~\ref{second_diagrammatic} these amount to showing the relations $\rope{i+1}=\rope{1}+x_1\tope{1}+\cdots + x_i\tope{i}$ and $\rope{i}=\rope{1}+x_1\tope{1}+\cdots+x_{i-1}\tope{i-1}$. 
    But~\ref{third_diagrammatic} implies  $x_j\tope{j}=\rope{j+1}-\rope{j}$ so both equalities now follow.
\end{proof}
\begin{eg}
The relation $\tope{i}\rope{i+1}=\rope{i+1}\tope{i}+\rope{i}\tope{i+1}$, written as $\tope{}(\idem\otimes \rope{}\otimes \idem)=\tope{}\otimes \rope{}+\rope{}\otimes \tope{}$, follows from the chain of equalities
\begin{align*}\tope{}(\idem\otimes \rope{}\otimes \idem)=&\tope{}(x\tope{}\otimes \idem)+\tope{}(\rope{}\otimes \idem \otimes \idem)\nonumber\\=&\tope{}(x\otimes \idem)(\tope{}\otimes \idem)+\rope{}\otimes \tope{}\nonumber\\
=&(\idem\otimes \rope{})(\tope{}\otimes \idem)+\rope{}\otimes \tope{}
=\tope{}\otimes \rope{}+\rope{}\otimes \tope{}
\end{align*}
where in the second equality we used the commutation relation $\tope{}(\rope{}\otimes \idem\otimes \idem)=\rope{}\otimes \tope{}$.
\end{eg}

\appendix

\section{$m$-Quasisymmetric polynomials}
\label{sec:mQuasi}

In this appendix, we will see that essentially all results of the main body of this work have an extension to $m>1$. The exposition is intentionally terse and  proofs are omitted; details and complete proofs can be found in the arXiv version v2 \cite{NST_1} which was written for general $m$.

Given an integer $m\geq 1$, we consider the more general context of \textit{$m$-quasisymmetric polynomials}. Classically, these are defined as certain polynomials in  $\mathbb{Z}[\{z_1^{(j)},z_2^{(j)},\ldots\}_{1\le j \le m}]$ where $z_1^{(j)},z_2^{(j)},\ldots$ are considered the $j$'th colored variables \cite{Av0507, AvCh1618,BauHol08,Poirier98}. Most of these works are in the setting of formal power series instead of polynomials, but we can pass to the finite variable setting by truncating the variable sets. 
The $m$-quasisymmetric polynomials are usually defined as the linear span of a basis of ``fundamental'' $m$-quasisymmetric polynomials \cite[\S 3.2]{HsiaoPetersen10}. 

We adopt a slightly different perspective on $\qsym[m]{n}$ which we have not seen in the existing literature despite its naturality. By arranging the variables in order \begin{align*}z_1^{(1)},z_1^{(2)},\ldots,z_1^{(m)},z_2^{(1)},z_2^{(2)},\dots\end{align*} and relabeling them $x_1,x_2,\ldots,x_m,x_{m+1},x_{m+2},\dots$ we obtain the following description.

\begin{defn} \label{def:m_quasis}
    The \emph{$m$-quasisymmetric polynomials} $\qsym[m]{n}\subset \poly_n$ are those polynomials such that for any sequence $a_1,\ldots,a_k\ge 1$, the coefficients of $x_{i_1}^{a_1}\cdots x_{i_k}^{a_k}$ and $x_{j_1}^{a_1}\cdots x_{j_k}^{a_k}$ are equal whenever $1\le i_1<\cdots < i_k \le n$ and $1\le j_1<\cdots < j_k \le n$ and $i_\ell \equiv j_\ell\text{ mod }m$ for all $1\le \ell \le k$.
\end{defn}

The equivalence with the description given in \cite[\S 3.2]{HsiaoPetersen10} is straightforward. 
Beyond the convenience of having only a single alphabet, the definition also highlights a behavior with respect to translation which is difficult to see in terms of colored alphabets.

\begin{eg}
\label{eg:from_appendix_m_quasi}
In $\poly_4$, $x_3^2x_4+x_1^2x_4+x_1^2x_2$ is $2$-quasisymmetric while $        x_3^2x_4$ is $3$-quasisymmetric.
\end{eg}

\subsection{$m$-quasisymmetric divided differences}
Note that in Definition~\ref{def:m_quasis} the condition on the monomials whose coefficients must be equal can be rephrased as saying that the coefficients of $\sfx^{\mathsf{c}}$ and $\sfx^{\mathsf{c'}}$ are equal if $\mathsf{c'}$ is obtained from $\mathsf{c}$ by adding or removing consecutive strings of $m$ zeros in $\mathsf{c}$.

In what follows we write $0^m$ for a list of $m$ zeros, so that $\rope{i}^m(f)=f(x_1,\ldots,x_{i-1},0^m,x_{i},\ldots,x_{n-m})$.
For $f\in \poly$ consider the long range divided difference
 \begin{align*}\partial_i^{\underline{m}}(f)=\frac{f-f(x_1,\ldots,x_{i-1},x_{i+m},x_{i+1},\ldots,x_{i+m-1},x_i,x_{i+m+1},\ldots)}{x_i-x_{i+m}}.
 \end{align*}

\begin{defn}
\label{defn:topem}
We define the operator $\tope[m]{i}:\poly\to \poly$ by any of the equivalent expressions
\begin{align*}
\tope[m]{i}f\coloneqq \rope{i}^m\partial_i^{\underline{m}}f=\rope{i+1}^m\partial_i^{\underline{m}}f=\frac{\rope{i+1}^mf-\rope{i}^mf}{x_i}.
\end{align*}
\end{defn}

This is the $m$-quasisymmetric divided difference.
We can express $\tope[m]{i}$ in terms of $\tope{i}$ and $\rope{i}$ via the identity
$\tope[m]{i}=\tope{i}\rope{i+1}^{m-1}$.
We have the following analogue of the characterization of quasisymmetry.
\begin{thm}
    Let $f\in \poly_n$. Then $f\in \qsym[m]{n}$ if and only if $\rope{1}^mf=\cdots =\rope{n-m+1}^mf$. Consequently, 
    $\qsym[m]{n}$ is a ring.
    Furthermore, $f\in \poly_n$ is $m$-quasisymmetric if and only if $\tope[m]{1}f=\cdots =\tope[m]{n-m}f=0$.
\end{thm}
The reader may check that the polynomial
 $f=x_3^2x_4+x_1^2x_4+x_1^2x_2\in\poly_4$ from Example~\ref{eg:from_appendix_m_quasi} belongs to $\qsym[2]{4}$ by calculating $\tope[2]{1}f$ and $\tope[2]{2}$ and seeing that both equal $0$.



\begin{lem}[Twisted Leibniz rule]
\label{lem:mleibniz}
For $f,g\in \poly$ we have
$\tope[m]{i}(fg)=\tope[m]{i}(f)\rope{i+1}^m(g)+\rope{i}^m(f)\tope[m]{i}(g)$.
\end{lem}

\subsection{Indexed forests}
\label{sub:indexed_forest}

An \emph{$(m+1)$-ary rooted plane tree} $T$ is a rooted plane tree where each node has either $m+1$ children $v_0,\ldots,v_m$ or $0$ children. The notions of internal node, leaf, and trivial tree $\ast$ are natural generalizations of the $m=1$ case.


\begin{defn}\label{def:m_indexed_forest}
    An \emph{$m$-indexed forest} is an infinite sequence  $T_1,T_2,\ldots$ of $(m+1)$-ary trees where all but finitely many of the trees are $\ast$.
    We write $\indexedforests[m]$ for the set of all $m$-indexed forests.
\end{defn}

As an example, \Cref{fig:2-indexed_forest_eg} depicts an $F\in \indexedforests[2]$. We identify the leaves of $F$ with $\mathbb{N}$ as before. Write $\internal{F}=\bigcup_{i=1}^{\infty}\internal{T_i}$, and $|F|=|\internal{F}|$, and 
identify nodes with $\internal{F}\sqcup \NN$. For $v\in \internal{F}$ we always write
$   v_0,\ldots,v_m\in \internal{F}\sqcup \NN$
for the children of $v$ from left to right. 

Finally, for $F\in \indexedforests[m]$ we define its \emph{support} $\supp(F)$ to be the set of leaves in $\NN$ associated to the nontrivial trees in $F$, and for fixed $n\ge 1$ we define the class of forests
\begin{align*}
\suppfor[m]{n}=\{F\in \indexedforests[m]\suchthat \supp(F)\subset [n]\}.
\end{align*}
 The cardinality of $\suppfor[m]{n}$ is given by \emph{Raney numbers} \cite{Ra60}-- write $n=mq+r$ where $0\leq r\leq m-1$.
    Then we have
    \begin{align}
    \label{eq:appendix_raney}
    |\suppfor[m]{n}|=\frac{r+1}{n+1}\binom{n+q}{q}=\frac{r+1}{(m+1)q+r+1}\binom{(m+1)q+r+1}{q}.
    \end{align}

\begin{figure}[!ht]
    \centering
    \includegraphics[scale=0.55]{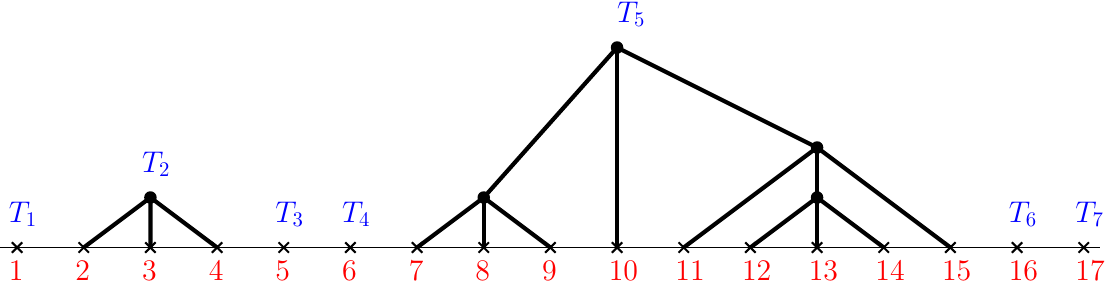}
    \caption{A $2$-indexed forest in $\suppfor[2]{15}$}
    \label{fig:2-indexed_forest_eg}
\end{figure}

Let $F\in \indexedforests[m]$ and $v\in\internal{F}$. The flag value $\rho_F(v)$ is the label of the leaf obtained by going down \emph{leftmost} edges starting from $v$. The \emph{code} $\sfc(F)$ is then defined as in Definition~\ref{defn:code_forest}.

\begin{thm}
\label{thm:mForesttoCode}
    The map $\sfc:\indexedforests[m]\to \nvect$ is a bijection.
\end{thm}

For $F\in \indexedforests[m]$, let $\qdes{F}\coloneqq \{\rho_F(v)\suchthat v \text{ a terminal node in } F\}.$ 
For $F$ in \Cref{fig:2-indexed_forest_eg} we have $\qdes{F}=\{2,7,12\}$.
One has $i\in \qdes{F}$ if and only if $ \Longleftrightarrow c_{i}>0 \text{ and } c_{i+1}=\cdots=c_{i+m}=0.$ for $\sfc(F)=(c_i)_i$. In particular,
 if $i,j\in \qdes{F}$ then $|i-j|\ge m+1$.
Define
\begin{align*}
 \ltfor[m]{n} &\coloneqq \{F\in \indexedforests[m]\suchthat \qdes{F}\subset [n]\}.
\end{align*}
Note that $\suppfor[m]{n}\subset \ltfor[m]{n}$.
For the forest $F$ in Figure~\ref{fig:2-indexed_forest_eg}, we have $F\in \ltfor[2]{n}$ for all $n\geq 12$.
Finally we define the set of \emph{zigzag forests} by
\begin{align*}
         \zigzag[m]{n} &\coloneqq\ltfor[m]{\{n-m+1,n-m+2,\ldots,n\}}
\end{align*}
An element of $\zigzag[2]{6}$ is shown in~\Cref{fig:2-zigzag_eg}. Note that it also belongs to $\zigzag[2]{7}$.

\begin{figure}[!ht]
    \centering
    \includegraphics[scale=0.85]{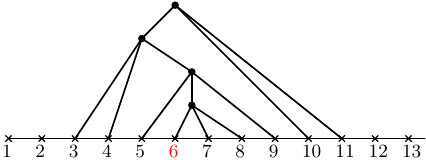}
    \caption{A forest $F\in \indexedforests[2]$ in $\zigzag[2]{6}$ and $\zigzag[2]{7}$  with $\qdes{F}=\{6\}$}
    \label{fig:2-zigzag_eg}
\end{figure}

\begin{defn}
For $F\in \indexedforests[m]$ and any $i$, the \emph{blossomed forest} $F\cdot i$ is obtained by making the $i$th leaf of $F$ into a terminal node by giving it $m+1$ leaf children.
    If $i\in \qdes{F}$, we define the \emph{trimmed forest} $F/i\in \indexedforests[m]$ by removing the terminal node $v$ with $\rho_F(v)=i$.
\end{defn}


The notion of trimming sequences $\Trim{F}$ is the same as in the $m=1$ case, and these sequences are again in bijection with standard decreasing labelings of $F$,

\subsection{$m$-indexed forests and the $m$-Thompson monoids}
\label{sub:mthompson}

We define a monoid structure on $\indexedforests[m]$ by taking for $F,G\in \indexedforests[m]$ the composition $F\cdot G\in\indexedforests[m]$ to be obtained by identifying the $i$th leaf of $F$ with the $i$th root node of $G$.

If we write $\underline{i}=\underbrace{\ast \ast\cdots \ast}_{i-1} \pitchfork \ast \ast \cdots$ where $\pitchfork$ has one internal node with $m+1$ leaf children, then 
$\Trim{F}=\{(i_1,\ldots,i_k): F=\underline{i_1} \cdots \underline{i_k}\}$. 
Every $F\in \indexedforests[m]$ has a unique expression $F=\underline{1}^{c_1}\cdot\underline{2}^{c_2}\cdots$. The exponents are given by $\sfc(F)=(c_1,c_2,\ldots)$.


\begin{defn}
\label{defn:mThMon}
    The $m$-Thompson monoid $\Th[m]$ is the quotient of the free monoid $\{1,2,\ldots\}^*$ by the relations $i\cdot j=j\cdot (i+m)$ for $i>j$.
\end{defn}

\begin{thm}
\label{thm:mthomisom} The map $\Th[m]\to \indexedforests[m]$ given by $i\mapsto \underline{i}$ is a monoid isomorphism.
\end{thm}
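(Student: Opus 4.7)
The plan is to prove the theorem in three steps: verify that the assignment $i\mapsto \underline{i}$ respects the defining relations of $\Th[m]$ (so it descends to a monoid homomorphism), show surjectivity via Proposition~\ref{prop:ForesttoCode}, and show injectivity by proving every word has a unique non-decreasing representative modulo the relations.

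First, to verify the relation $\underline{i}\cdot \underline{j}=\underline{j}\cdot\underline{i+m}$ for $i>j$, I would compute codes using the blossoming formula \eqref{eqn:codeblossom}. Starting from $\sfc(\underline{i})=(0^{i-1},1,0,\ldots)$ and blossoming at $j<i$ inserts a $1$ at position $j$ followed by $m$ zeros, pushing the existing $1$ from position $i$ to position $i+m$; starting from $\sfc(\underline{j})=(0^{j-1},1,0,\ldots)$ and blossoming at $i+m$ produces a $1$ at position $i+m$ without affecting the $1$ at position $j$. Both codes equal $(0^{j-1},1,0^{i+m-j-1},1,0,\ldots)$, and by \Cref{thm:ForesttoCode} the underlying forests agree. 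Hence the free monoid homomorphism $\{1,2,\ldots\}^*\to\indexedforests[m]$ sending $i\mapsto\underline{i}$ factors through $\Th[m]$.

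Next, surjectivity is immediate: Proposition~\ref{prop:ForesttoCode} gives every $F\in\indexedforests[m]$ as $F=\underline{1}^{c_1}\cdot\underline{2}^{c_2}\cdots$, which is the image of the word $1^{c_1}2^{c_2}\cdots\in\Th[m]$. For injectivity, I would establish that every element of $\Th[m]$ has a non-decreasing representative. Given a word $w=i_1\cdots i_k$, let $a=\min\{i_1,\ldots,i_k\}$ and pick the leftmost $j$ with $i_j=a$. Each letter $i_\ell$ with $\ell<j$ satisfies $i_\ell>a$, so applying the relation $i_\ell\cdot a = a\cdot(i_\ell+m)$ repeatedly moves $a$ to the front while leaving a residual word $w'$ of length $k-1$ whose letters are all $\ge a$. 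By induction on length, $w'$ is equivalent to a non-decreasing word $a_2\le\cdots\le a_k$ with $a_2\ge a$, so $w\sim a\cdot a_2\cdots a_k$ is a non-decreasing representative.

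Finally, a non-decreasing word $1^{c_1}2^{c_2}\cdots n^{c_n}$ maps to the forest $\underline{1}^{c_1}\cdot\underline{2}^{c_2}\cdots\underline{n}^{c_n}$, whose code is $(c_1,\ldots,c_n,0,\ldots)$ by Proposition~\ref{prop:ForesttoCode}; hence distinct non-decreasing words map to distinct forests. Combining with the previous paragraph, the homomorphism is injective: two words with the same image in $\indexedforests[m]$ have non-decreasing representatives giving the same code, hence equal representatives, hence are equal in $\Th[m]$. The main subtlety is the termination/well-definedness of the sorting procedure, since the relations strictly increase the swapped letter; the ``move the minimum to the front'' strategy sidesteps this by only ever swapping $a$ past strictly larger letters, so the relation applies at each step and the length of the unprocessed suffix decreases monotonically.
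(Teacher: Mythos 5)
Your proof is correct and follows essentially the same route as the paper: verify the defining relation, deduce surjectivity from Proposition~\ref{prop:ForesttoCode}, and establish injectivity by showing every word has a non-decreasing representative (sorting by moving the minimum to the front). The only cosmetic difference is that you check the relation $\underline{i}\cdot\underline{j}=\underline{j}\cdot\underline{i+m}$ by comparing codes via \eqref{eqn:codeblossom}, whereas the paper compares the two forests directly in \eqref{eqn:ijjimForest}; both computations are immediate and the rest of the argument, including the careful spelling-out of why the sorting procedure terminates, matches the paper's (which states this step more tersely).
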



  From now on we will tacitly identify elements $i_1\cdots i_k\in \Th[m]$ of the Thompson monoid and the associated forest $\underline{i_1} \cdots \underline{i_k}$ in $\indexedforests[m]$, and so omit the underlines from now on.



 For $F,G\in \indexedforests[m]$, say $F\ge G$ if $F=H\cdot G$ for some $H\in \indexedforests[m]$. If $F \ge G$ then we write $F/G\in \indexedforests[m]$ to be the unique indexed forest with $F=(F/G)\cdot G$.




\subsection{Forest polynomials $\forestpoly[m]{F}$ and trimming operators $\tope[m]{F}$}
\label{sub:mforest_polynomials}

We now introduce a new family of polynomials $\forestpoly[m]{F}$ indexed by $F\in \indexedforests[m]$ which we call $m$-forest polynomials.

\begin{defn}
    For $F\in \indexedforests[m]$, define $\compatible{F}$ to be the set of all $\kappa:\internal{F}\to \NN$ such that for all $v\in \internal{F}$ with children $v_0,\ldots,v_m\in \internal{F}\sqcup \NN$ we have
    \begin{itemize}
        \item $\kappa(v)\le \rho_F(v)$
        \item If $v_i\in \internal{F}$ then
        $\kappa(v)\le \kappa(v_i)-i$
        \item $\kappa(v)\equiv \rho(v)$ mod $m$.
    \end{itemize}
    The $m$-forest polynomial $\forestpoly[m]{F}$ is the generating function for $\compatible{F}$:
    \begin{align*}\forestpoly{F}=\sum_{\kappa\in \compatible{F}}\prod_{v\in \internal{F}}x_{\kappa(v)}.
    \end{align*}
\end{defn}

\begin{prop}\label{prop:mleading_monomial}
For $F\in \indexedforests[m]$ with code $\sfc(F)=(c_1,c_2,\ldots)$, we have
$$\forestpoly[m]{F}=\sfx^{\sfc(F)}+\sum_{\sfd<\sfc(F)} a_{\sfd}\sfx^{\sfd}$$
where the revlex ordering is used. Furthermore, if $c_i=0$ for all $i>m$ then $\forestpoly[m]{F}=\sfx^{\sfc(F)}$.
\end{prop}


For $F\in \indexedforests[2]$ in Figure~\ref{fig:2-forest_poly_eg} we have
$\forestpoly{F}=x_{2} x_{3} x_{6}^{2} + x_{2} x_{3} x_{4} x_{6} + x_{2} x_{3} x_{4}^{2}$.
Note that $\sfc(F)=(0,1,1,0,0,2,0,\dots)$ and $\sfx^{\sfc(F)}=x_2x_3x_6^2$ is indeed the revlex leading term.

\begin{figure}[!ht]
    \centering
    \includegraphics[width=\textwidth]{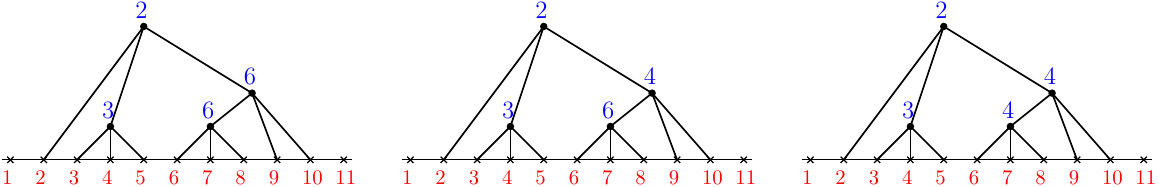}
    \caption{An $F\in \indexedforests[2]$ with the three fillings in $\compatible{F}$}
    \label{fig:2-forest_poly_eg}
\end{figure}




\begin{prop}
\label{prop:topemthompson}
    $\tope[m]{i}\tope[m]{j}=\tope[m]{j}\tope[m]{i+m}$ for $i>j$. In particular $i\mapsto \tope[m]{i}$ induces a representation of $\Th[m]$ via compositions of the $\tope[m]{i}$ operators.
\end{prop}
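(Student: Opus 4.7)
The plan is to leverage the tensor-product description of the $\tope[m]{i}$ sketched just before the statement: writing $\poly=\poly_1^{\otimes\infty}$, the operator $\tope[m]{i}$ acts as the identity on all tensor factors except the $(m+1)$ positions $i,i+1,\ldots,i+m$, on which it applies the elementary map $\tope[m]{}:\poly_{m+1}\to\poly_1$ from~\eqref{eqn:topemdef}, collapsing those positions into a single one at index $i$.

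With this in hand, I would track which positions of $f$ each side consumes. For the left-hand side $\tope[m]{i}\tope[m]{j}(f)$ with $i>j$, the inner $\tope[m]{j}$ uses old positions $j,\ldots,j+m$; its output has a fresh variable at new position $j$, while old positions $\geq j+m+1$ are re-indexed to new positions $\geq j+1$. The subsequent $\tope[m]{i}$ acts on new positions $i,\ldots,i+m$; since $i>j$ these lie strictly above $j$ and therefore correspond to old positions $i+m,\ldots,i+2m$. For the right-hand side $\tope[m]{j}\tope[m]{i+m}(f)$, the inner $\tope[m]{i+m}$ uses old positions $i+m,\ldots,i+2m$, and because $j+m<i+m$ the outer $\tope[m]{j}$ picks up old positions $j,\ldots,j+m$ with no shift.

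Both composites therefore apply one copy of $\tope[m]{}$ to each of the two blocks $\{j,\ldots,j+m\}$ and $\{i+m,\ldots,i+2m\}$ of $f$, with the identity elsewhere. These blocks are disjoint (forced by $i>j$, which gives $j+m<i+m$), so the two applications of $\tope[m]{}$ occur on commuting tensor factors and the two composites agree. The ``in particular'' statement is then immediate from the universal property of the presentation of $\Th[m]$ by the generators-and-relations given in \Cref{defn:ThMon}: the assignment $i\mapsto \tope[m]{i}$ satisfies the defining relations, so it extends uniquely to a monoid homomorphism $\Th[m]\to\End(\poly)$.

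The main subtlety is simply the bookkeeping of new-vs-old positions after one application of $\tope[m]{j}$. A fully computational alternative is to expand both sides using $\tope[m]{i}(f)=(\rope{i+1}^mf-\rope{i}^mf)/x_i$, use that each $\rope{k}^m$ is a ring homomorphism with $\rope{k}^m(x_j)=x_j$ for $j<k$ and $\rope{k}^m(x_{i+m})=x_i$ for $k\leq j+1$ when $j<i$, and then match the four resulting terms on each side term-by-term via the commutation $\rope{a}^m\rope{b}^m=\rope{b}^m\rope{a+m}^m$ (valid for $a\geq b$ since both sides zero out precisely the positions $\{b,\ldots,b+m-1\}\cup\{a+m,\ldots,a+2m-1\}$ of $f$ and re-index the remaining positions identically).
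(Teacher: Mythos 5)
Your main argument is the same as the paper's: both composites equal $\idem^{\otimes j-1}\otimes \tope[m]{}\otimes \idem^{\otimes i-j-1}\otimes \tope[m]{}\otimes \idem^{\otimes\infty}$ under the tensor-product description $\tope[m]{i}=\idem^{\otimes i-1}\otimes\tope[m]{}\otimes\idem^{\otimes\infty}$, and you just spell out the position bookkeeping that the paper leaves to the reader. The computational alternative you sketch is a valid fallback, but the primary route matches the paper's one-line verification.
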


In particular, for $F\in \Th[m]$ we get a well-defined operator  $\tope[m]{F}\coloneqq \tope[m]{i_1}\cdots \tope[m]{i_k}$ for any expression $F=i_1\cdots i_k$.



\subsection{Characterizing $m$-Forest polynomials via trimming operators}
\label{sub:mforests_as_analogues_of_schuberts}

\begin{thm}
\label{thm:topemtrims}
    For $F\in \indexedforests[m]$ and $i\geq 1$ we have
         \begin{align}\tope[m]{i}\forestpoly[m]{F}=\begin{cases}\forestpoly[m]{F/i}&\text{if }i\in \qdes{F}\\0&\text{otherwise.}\end{cases}\end{align}\end{thm}


\begin{thm}
\label{thm:mforestunique}
    The family of $m$-forest polynomials $\{\forestpoly[m]{F}:F\in \indexedforests[m]\}$ is uniquely characterized by the properties $\forestpoly[m]{\emptyset}=1$, $\forestpoly[m]{F}$ is homogeneous, and $\tope[m]{i}\forestpoly[m]{F}=\delta_{i\in \qdes{F}}\forestpoly[m]{F/i}$.
\end{thm}

\begin{cor}
\label{cor:mTFG}
    For $F,G\in \indexedforests[m]$ we have \begin{align}\tope[m]{F}\forestpoly[m]{G}=\begin{cases}\forestpoly[m]{G/F}&\text{if }G\ge F\\0&\text{otherwise.}\end{cases}\end{align}
    In particular, $\ct \tope[m]{F}\forestpoly[m]{G}=\delta_{F,G}$.
\end{cor}

As a consequence we obtain the following.
\begin{prop}
\label{prop:mforestZbasis}
    The $m$-forest polynomials $\{\forestpoly[m]{F}:F\in \indexedforests[m]\}$ form a $\mathbb{Z}$-basis for $\poly$, and we can write any $f\in \poly$ in this basis as
    \begin{align*}f=\sum (\ct \tope[m]{F}f)\,\forestpoly[m]{F}.\end{align*}
    Additionally, $\{\forestpoly[m]{G}\suchthat F\in\ltfor[m]{n}\}$ is a $\mathbb{Z}$-basis for $\poly_n$.
\end{prop}

\subsection{Positive expansions}
\label{sub:mpositivity}



We group the chief results of Section~\ref{sec:positivity} as one itemized result.
\begin{thm}
The following positivity results hold.
\begin{enumerate}
    \item For $F\in \indexedforests[m]$ we have
    $\rope{i}^m\,\forestpoly[m]{F}$ is multiplicity-free $m$-forest positive.
    \item For $F,G\in \indexedforests[m]$ we have $\forestpoly[m]{F}\forestpoly[m]{G}$ is $m$-forest positive.
    \item ($m$-forest polynomial ``Monk's Rule'') For $F\in \indexedforests[m]$ we have $\forestpoly[m]{\underline{i}}\,\forestpoly{F}=(x_i+x_{i-m}+x_{i-2m}+\cdots+x_{i\text{ mod }m})\forestpoly[m]{F}$ is multiplicity-free $m$-forest positive. (We take $i\text{ mod }m$ to be the representative of $i$ modulo $m$ in $\{1,\dots,m\}$.
\end{enumerate}
\end{thm}

The following result which captures positivity of expansions between $m$-forest polynomials for varying $m$ is new. 
The straightforward proof is omitted.
\begin{thm}
   For any $k\geq 1$, $m$-forest polynomials are $km$-forest positive. In particular, forest polynomials are  $m$-forest positive.
\end{thm}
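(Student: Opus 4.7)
The plan is to prove the first (stronger) claim by induction on $|F|$, since the second is just the $m=1$ case. The base case $F=\emptyset$ is trivial as $\forestpoly[m]{\emptyset}=1=\forestpoly[km]{\emptyset}$. For $|F|\geq 1$, the polynomial $\forestpoly[m]{F}$ is homogeneous of positive degree, so its constant term vanishes; thus the coefficient of $\forestpoly[km]{\emptyset}$ in any $km$-forest expansion of $\forestpoly[m]{F}$ is $0$. Combined with the direct $km$-analogue of \Cref{lem:forestpos} (each coefficient on $\forestpoly[km]{H}$ for $H\neq\emptyset$ can be read off from $\tope[km]{i}\forestpoly[m]{F}$ after choosing any $i\in\qdes{H}$), this reduces the problem to showing $\tope[km]{i}\forestpoly[m]{F}$ is $km$-forest positive for every $i\geq 1$.

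The key identity driving everything is
\[
\tope[km]{i}=\tope[m]{i}\,\rope{i+1}^{(k-1)m},
\]
which I would derive as a one-liner from the remark following \Cref{defn:topem}: applying $\tope[\ell]{i}=\tope{i}\,\rope{i+1}^{\ell-1}$ at $\ell=km$ and at $\ell=m$ and combining gives the claimed factorization. (Alternatively one can telescope $\rope{i+1}^{km}-\rope{i}^{km}$ and observe that all but the first summand die because the intermediate substitutions introduce a factor of $x_i$ that $\rope{i}$ kills.)

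With the identity in hand, write $\tope[km]{i}\forestpoly[m]{F}=\tope[m]{i}\bigl(\rope{i+1}^{(k-1)m}\forestpoly[m]{F}\bigr)$. Iterating \Cref{prop:Rmforestpositive} (using linearity at each step, which is enough even though multiplicity-freeness is only guaranteed for the first application) expresses $\rope{i+1}^{(k-1)m}\forestpoly[m]{F}$ as a nonnegative integer combination $\sum_G b_G\,\forestpoly[m]{G}$ of $m$-forest polynomials, all with $|G|=|F|$. Applying \Cref{thm:topemtrims} then yields
\[
\tope[km]{i}\forestpoly[m]{F}=\sum_{G:\,i\in\qdes{G}} b_G\,\forestpoly[m]{G/i},
\]
a nonnegative integer combination of $m$-forest polynomials each of strictly smaller size $|F|-1$. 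By the inductive hypothesis every such $\forestpoly[m]{G/i}$ is $km$-forest positive, hence so is the sum, completing the induction. The only real obstacle is spotting the identity for $\tope[km]{i}$; afterward the combinatorial heavy lifting has all been carried out by \Cref{prop:Rmforestpositive} and \Cref{thm:topemtrims}.
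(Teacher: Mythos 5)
Your proof is correct, and it takes a genuinely different route from the paper's. The paper telescopes the numerator of $\tope[km]{i}\forestpoly[m]{F}$ into a sum of $k$ terms, arriving at the decomposition
\begin{align*}
\tope[km]{i}\forestpoly[m]{F}=\sum_{j=0}^{k-1}\rope{i}^{jm}\,\rope{i+jm+1}^{(k-j-1)m}\,\tope[m]{i+jm}\,\forestpoly[m]{F},
\end{align*}
and then observes that each factor preserves $m$-forest positivity (by \Cref{prop:Rmforestpositive} and \Cref{thm:topemtrims}) so the induction closes. You instead spot the single-factor operator identity $\tope[km]{i}=\tope[m]{i}\,\rope{i+1}^{(k-1)m}$, which follows in one line from the remark $\tope[\ell]{i}=\tope{i}\,\rope{i+1}^{\ell-1}$, and then apply $\rope{i+1}^m$ repeatedly followed by a single $\tope[m]{i}$. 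The two decompositions are compatible (both collapse to the same telescope), but yours collects all the ``pushing to the left'' into one factor $\rope{i+1}^{(k-1)m}$ applied first, whereas the paper's sum splits according to which residue class of position $x_i$ passes through. Your version is arguably cleaner to state and verify. Both routes invoke the same two ingredients: the $m$-forest positivity of $\rope{j}^m$ (\Cref{prop:Rmforestpositive}) and the trimming rule (\Cref{thm:topemtrims}). One small bonus of your write-up is that you explicitly treat the coefficient of $\forestpoly[km]{\emptyset}$, which \Cref{lem:forestpos} alone does not control; the paper leaves this implicit (it holds because every $\forestpoly{F}$ with $|F|\geq 1$ has zero constant term), so your version is marginally more careful on this point.
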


\subsection{Fundamental $m$-quasisymmetrics and $\zigzag[m]{n}$}
\label{sub:mfundamental}


We translate the definition of $m$-fundamental quasisymmetric polynomials~\cite{AvCh1618,BauHol08} to our single alphabet setting.
  For an integer sequence $a=(a_1,\ldots,a_k)$ with $a_i\ge 1$ we define the set of \emph{$m$-compatible sequences} \begin{align*}\compatible[m]{a}=\{(i_1,\ldots,i_k):i_j\equiv a_j\text{ mod }m,\text{ }a_j\ge i_j\ge i_{j+1},\text{ and if }a_j>a_{j+1}\text{ then }i_j>i_{j+1}\}.\end{align*}
  Then we define the \emph{$m$-slide polynomial} to be the generating function
    \begin{equation*}
        \slide[m]{a}=\sum_{\mathbf{i}\in \compatible[m]{a}}\sfx_{\mathbf{i}}.
    \end{equation*}
The notion of an $m$-compatible sequence is a straightforward generalization of compatible sequences for $m=1$. The definition of an $m$-slide polynomial is then a straightforward generalization of the notion of (ordinary) slide polynomials \cite{AS17}.
Like with forest polynomials, it is easy to check that the revlex leading monomial of $\slide[m]{a}$ is $\sfx^{\sfc}$ where $\sfc=(c_i)_{i\in \NN}\in \nvect$ is determined by $c_i=\#\{a_j=i\suchthat 1\leq j\leq k\}$.
Furthermore for large $m$ we have the equality $\slide[m]{a}=\sfx^{\sfc}$.

Just as the ordinary fundamental quasisymmetric polynomials constitute a subfamily of slide polynomials \cite[Lemma 3.8]{AS17}, so too do the $m$-fundamental quasisymmetric polynomials constitute a subfamily of the $m$-slides.


\begin{defn}  \label{def:mqseq}
Let $\qseq[m]{n}$  be the set of sequences $(a_1,\ldots,a_k)$ satisfying
    \begin{enumerate}[label=(\roman*)]
        \item \label{it:8.2i} $a_1\ge \cdots \ge a_k\ge 1$
        \item \label{it:8.2ii}$n\ge a_1\ge n-m+1$
        \item \label{it:8.2iii} $a_i-a_{i+1}\le m$ for $1\leq i\leq k-1$.
    \end{enumerate}
If $(a_1,\ldots,a_k)\in \qseq[m]{n}$ then $\slide[m]{a}\in \poly_n$ is called an \emph{$m$-fundamental quasisymmetric polynomial}.
\end{defn}

Up to the change of $m$ alphabets to a single one, this notion corresponds to the one in the literature: see~\cite[\S 3.2]{HsiaoPetersen10} for a straightforward comparison.

\begin{thm}
\label{thm:mqseqbij}
The  mapping $(a_1,\ldots,a_k)\mapsto F=a_k\cdots a_1$ is a bijection
$\qseq[m]{n}\to \zigzag[m]{n}$. Under this bijection we have $\slide[m]{a}=\forestpoly[m]{F}$.
\end{thm}


\begin{eg}
Consider the element of $\zigzag[2]{6}$ from Figure~\ref{fig:2-zigzag_eg}. The corresponding element of $\qseq[2]{6}$ is $a=(6,5,3,3)$, and
the corresponding $2$-slide polynomial equals
\begin{align*}
    \slide[2]{6533}=x_3^2x_5x_6+x_1x_3x_5x_6+x_1^2x_5x_6+x_1^2x_3x_6+x_1^2x_3x_4.
\end{align*}
Note that $\tope[2]{6}\,\slide[2]{6533}=x_3^2x_5+x_1x_3x_5+x_1^2x_5+x_1^2x_3=\slide[2]{533}$ as predicted by Theorem~\ref{thm:mqseqbij}.
\end{eg}

We may now identify a distinguished basis for $\qsym[m]{n}$, noting that the following result is sometimes taken in the literature as the definition of $\qsym[m]{n}$.

\begin{prop}
\label{prop:mqsymbasis}
    $\qsym[m]{n}$ has a $\mathbb{Z}$-basis $\{\slide[m]{a}\suchthat a\in \qseq[m]{n}\}$ of fundamental $m$-quasisymmetric polynomials.
\end{prop}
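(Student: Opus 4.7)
The plan is to combine Theorem~\ref{thm:Tqsymchar}, Lemma~\ref{lem:kernelofall}, Corollary~\ref{cor:simultopeker} and Theorem~\ref{thm:qseqbij} to realize $\qsym[m]{n}$ as a simultaneous kernel of certain $\tope[m]{i}$ operators, and then read off the basis directly.

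First I would rewrite $\qsym[m]{n}$ as an intersection of kernels of trimming operators indexed by the complement of $A \coloneqq \{n-m+1, n-m+2, \ldots, n\}$. By Theorem~\ref{thm:Tqsymchar}, a polynomial $f \in \poly_n$ is $m$-quasisymmetric precisely when $\tope[m]{i} f = 0$ for all $1 \le i \le n-m$. On the other hand, Lemma~\ref{lem:kernelofall} identifies $\poly_n$ with $\bigcap_{i \ge n+1} \ker(\tope[m]{i})$. Combining these two characterizations, we obtain
\begin{align*}
\qsym[m]{n} \;=\; \bigcap_{i \notin A} \ker\!\left(\tope[m]{i}\right).
\end{align*}

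Next, I would apply Corollary~\ref{cor:simultopeker} to the set $A = \{n-m+1, \ldots, n\}$. That result yields a $\ZZ$-basis of the above intersection given by the $m$-forest polynomials $\forestpoly{G}$ with $G \in \ltfor[m]{A}$. By the definition of $\zigzag[m]{n}$ in Section~\ref{subsec:Zigzag}, we have the equality $\ltfor[m]{A} = \zigzag[m]{n}$, so $\{\forestpoly{G} \suchthat G \in \zigzag[m]{n}\}$ is a $\ZZ$-basis of $\qsym[m]{n}$.

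Finally, I would transport this basis through the bijection $\qseq[m]{n} \to \zigzag[m]{n}$ furnished by Theorem~\ref{thm:qseqbij}, under which $\slide[m]{a} = \forestpoly{F}$ whenever $a \mapsto F$. This immediately gives $\{\slide[m]{a} \suchthat a \in \qseq[m]{n}\}$ as a $\ZZ$-basis of $\qsym[m]{n}$, completing the proof. No step here is a genuine obstacle since all the hard work has been absorbed into the earlier results; the only thing that needs attention is the bookkeeping identification $\ltfor[m]{A} = \zigzag[m]{n}$ for $A = \{n-m+1, \ldots, n\}$, which is immediate from the definitions.
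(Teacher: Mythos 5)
Your proof is correct and follows essentially the same route as the paper: identify $\qsym[m]{n}$ as $\bigcap_{i\notin\{n-m+1,\ldots,n\}}\ker(\tope[m]{i})$ via Theorem~\ref{thm:Tqsymchar} and Lemma~\ref{lem:kernelofall}, then extract the basis of $m$-forest polynomials indexed by $\zigzag[m]{n}$ (the paper cites Proposition~\ref{prop:simulkernel} directly where you cite its immediate Corollary~\ref{cor:simultopeker}, which is an inessential difference), and finally transport through the bijection of Theorem~\ref{thm:qseqbij}.
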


\subsection{Coinvariants}
\label{sub:coinvs}
 Recall that $\qsymide[m]{n}$ is the ideal in $\poly_n$ generated by all polynomials $f\in \qsym[m]{n}$ with $\ct f=0$. We define the \emph{$m$-quasisymmetric coinvariants} to be
\begin{align*}
\qscoinv[m]{n}\coloneqq \poly_n/\qsymide[m]{n}.
\end{align*}


\begin{cor}
\label{cor:topemdescend}
    For $F\in \suppfor[m]{n}$ we have $\tope[m]{F}(\qsymide[m]{n})\subset \qsymide[m]{n-m|F|}$, and so $\tope[m]{F}$ descends to a map 
    $\tope[m]{F}:\qscoinv[m]{n}\to \qscoinv[m]{n-m|F|}$.
    In particular, $\tope[m]{1},\ldots,\tope[m]{n-m}$ descend to maps
    \begin{align*}
    \tope[m]{1},\ldots,\tope[m]{n-m}:\qscoinv[m]{n}\to \qscoinv[m]{n-m}.
    \end{align*}
\end{cor}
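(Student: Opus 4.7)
The plan is to deduce the corollary directly from \Cref{prop:fulltrim} with some careful bookkeeping of variable counts; the ``in particular'' statement will then follow once we recognize $\tope[m]{i} = \tope[m]{\underline{i}}$ for the one-node forest $\underline{i}$.

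First, I would write an arbitrary $f \in \qsymide[m]{n}$ as a finite sum $f = \sum_j g_j h_j$ with each $g_j \in \qsym[m]{n}$ of strictly positive degree (so $\ct g_j = 0$) and each $h_j \in \poly_n$. Applying $\tope[m]{F}$ termwise and invoking \Cref{prop:fulltrim} yields
\[
\tope[m]{F}(f) = \sum_j \rope{1}^{m|F|}(g_j)\,\tope[m]{F}(h_j).
\]

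Next, I would verify two claims to conclude that this right-hand side lies in $\qsymide[m]{n-m|F|}$. For the quasisymmetric factor: iterating the observation (implicit in the proof of \Cref{prop:fulltrim}, and immediate from \Cref{thm:Rqsymchar}) that $\rope{1}^m$ sends $\qsym[m]{k}$ into $\qsym[m]{k-m}$, we get $\rope{1}^{m|F|}(g_j) \in \qsym[m]{n-m|F|}$, and since $\ct \circ \rope{1} = \ct$ its constant term remains $0$, so it is a positive-degree element of $\qsym[m]{n-m|F|}$. For the other factor, I fix any $(i_1, \ldots, i_{|F|}) \in \Trim{F}$ and argue inductively that at step $\ell$ the partial trim $F_\ell$ lies in $\suppfor[m]{n-m(\ell-1)}$ so that $i_\ell \in \qdes{F_\ell} \subset [1, n - m\ell]$ and hence $\tope[m]{i_\ell}$ acts as a map $\poly_{n-m(\ell-1)} \to \poly_{n-m\ell}$; the inductive step reduces to checking from the code-level description of trimming in \Cref{subsec:trim_blossom} that $F/i \in \suppfor[m]{k-m}$ whenever $F \in \suppfor[m]{k}$ and $i \in \qdes{F}$.

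Combining, each summand is a product of a positive-degree element of $\qsym[m]{n-m|F|}$ with an element of $\poly_{n-m|F|}$, hence the whole sum lies in $\qsymide[m]{n-m|F|}$, yielding the claimed descent $\tope[m]{F} : \qscoinv[m]{n} \to \qscoinv[m]{n-m|F|}$. For the final assertion, note that $\tope[m]{i} = \tope[m]{\underline{i}}$ where $\underline{i}$ has a single terminal internal node at position $i$; it lies in $\suppfor[m]{n}$ precisely when its rightmost leaf $i+m$ is $\le n$, i.e.\ $1 \le i \le n-m$, and since $|\underline{i}| = 1$ we obtain the maps $\tope[m]{1}, \ldots, \tope[m]{n-m} : \qscoinv[m]{n} \to \qscoinv[m]{n-m}$. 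I do not anticipate any substantive obstacle here; the proof is really just organized bookkeeping around \Cref{prop:fulltrim}.
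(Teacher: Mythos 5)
Your proof is correct and takes essentially the same route as the paper: the paper's proof of this corollary is a one-line application of \Cref{prop:fulltrim} combined with the observations that $\rope{1}^{m|F|}$ carries $\qsym[m]{n}$ into $\qsym[m]{n-m|F|}$ and preserves positive-degree homogeneity. You have merely spelled out the intermediate bookkeeping (the decomposition $f = \sum_j g_j h_j$, the preservation of the vanishing constant term under $\rope{1}$, and the inductive variable count showing $\tope[m]{F}$ lands in $\poly_{n-m|F|}$) that the paper leaves implicit.
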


With the help of a natural generalization of Theorem~\ref{thm:forestfactorization} incorporating $m$, we then obtain:
\begin{thm}
\label{thm:mqsymidebasis}
\begin{enumerate}[label=(\arabic*)]
    \item \label{A:it1} $\qsymide[m]{n}$ has a $\mathbb{Z}$-basis given by $\{\forestpoly[m]{F}:F\in\ltfor[m]{n}\setminus \suppfor[m]{n}\}$.
    \item \label{A:it2} $\qscoinv[m]{n}$ has a $\mathbb{Z}$-basis given by $\{\forestpoly[m]{F}:\suppfor[m]{n}\}$.
    In particular its dimension is given by the Raney number in ~\eqref{eq:appendix_raney}. This  recovers \cite[Theorem 5.1]{Av0507} by taking $n=pm$.
\end{enumerate}
\end{thm}


\subsection{Harmonics}
\label{sub:harmonics}

    The \emph{$m$-quasisymmetric harmonics}  are defined to be
    \begin{align*}
    \hqsym[m]{n}\coloneqq &\{f\in \mathbb{Q}[\lambda_1,\ldots,\lambda_n]\suchthat \langle g,f\rangle_D=0\text{ for all } g\in \qsymide[m]{n}\}\\=&\{f\in \mathbb{Q}[\lambda_1,\ldots,\lambda_n]\suchthat g(\deri{1},\dots,\deri{n})f=0\text{ for all } g\in \qsym[m]{n}\text{ with }\ct g=0\}.
    \end{align*}

    For $f\in \QQ[\lambda_1,\lambda_2,\ldots]$ we define
    \begin{align*}
       \vope[m]{i}\coloneqq \int_{\lambda_{i+m}}^{\lambda_i}f(\lambda_1,\ldots,\lambda_{i-1},z,\lambda_{i+m+1},\ldots) dz=(\rope{i+1}^{\vee})^{m-1}\,\vope{i}f
    \end{align*}

\begin{prop}\label{prop:madjointness}
   The operator $\tope[m]{i}$ is adjoint to  $\vope[m]{i}$. 
    Consequently, for $F\in \indexedforests[m]$ we have a well-defined operator $\vope[m]{F}$ adjoint to $\tope[m]{F}$ defined by
   $
    \vope[m]{F}=
    \vope[m]{i_k}\cdots \vope[m]{i_1} \text{ for any } (i_1,\ldots,i_k)\in \Trim{F}.
    $
\end{prop}

Let $F\in \indexedforests[m]$ and let $\lambda=(\lambda_1,\lambda_2,\ldots)$ be a sequence with $\lambda_i\ge \lambda_{i+1}$ for all $i$. We define the \emph{forest polytope} $\cube_{F,\lambda}\subset \mathbb{R}^{\internal{F}}$ as the subset of assignments $\phi:\internal{F}\to \RR$ satisfying the following constraints. 
Letting $\phi_\lambda$ be the extension of $\phi$ to $\internal{F}\sqcup \supp(F)$ by setting $\phi_\lambda(i)=\lambda_i$, we have for all $v\in \internal{F}$ the inequalities
\begin{align*}
    \phi_\lambda(v_L)\ge \phi(v)\ge \phi_\lambda(v_R).
\end{align*}

Thus the defining inequalities only involve edges in a nested forest $\Bin{F}$ that we now introduce. Define the coloring of  $F\in \indexedforests[m]$ as the map $\col: \internal{F}\sqcup \NN\to \mathbb{Z}/m\mathbb{Z}$ by $\col(v)=(\rho_F(v)\!\!\mod m)$. For $v\in \internal{F}$ with children $v_0,\ldots,v_m\in \internal{F}\sqcup \NN$ we have  $\col(v_i)\equiv \col(v)+i\text{ mod }m$. 

We define $\Bin{F}$ to be the nested binary plane forest obtained by deleting all edges connecting $v\in \internal{F}$ to one of its internal children $v_1,\ldots,v_{m-1}$, and when referring to $v\in \internal{F}$ as an internal node of $\Bin{F}$, we write $v_L\coloneqq v_0$ and $v_R\coloneqq v_m$ for the left and right children of $v$. 
The connected components of $\Bin{F}$ are monochromatic binary trees, which we can then color with the common color of their vertices, as in \Cref{fig:coloredforest}.
\begin{figure}[h]
    \centering \includegraphics[width=\textwidth]{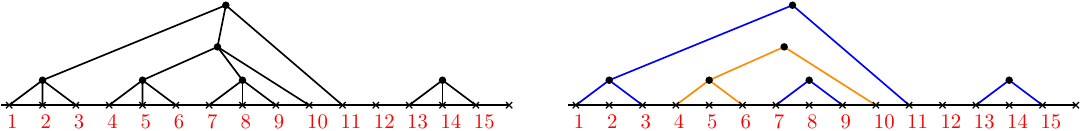}
    \caption{A forest $F\in \indexedforests[2]$ and its associated colored $\Bin{F}$}
    \label{fig:coloredforest}
\end{figure}



As in Lemma~\ref{lem:recursivevol}, given $F\in \indexedforests[m]$ and $i\in \qdes{F}$, we can consider the projection $\pi_v:\cube_{F,\lambda}\to [\lambda_{i+m},\lambda_i]$ which then satisfies 
$\pi_v^{-1}(z)=\cube_{F/i,\lambda'}$
for $\lambda'=(\lambda_1,\ldots,\lambda_{i-1},z,\lambda_{i+m+1},\ldots)$. 
It follows that \begin{align*}\vol(\cube_{F,\lambda})=\int_{\lambda_{i+m}}^{\lambda_i}\vol(\cube_{F/i,\lambda'})=\vope[m]{i}\vol(\cube_{F/i,\lambda}).
    \end{align*}

For $F\in \indexedforests[m]$, we define the volume polynomial $V_F(\uplambda)$ associated to $F$ as $\vol(\cube_{F,\lambda}).$

\begin{cor}\label{cor:mvol_via_vope}
Let $F\in \indexedforests[m]$. Then 
    $V_F(\uplambda)=\vope[m]{F}(1),$
    and for $f\in \poly$ we have $\langle f,V_F(\uplambda)\rangle_D=\ct \tope[m]{F}(f)$.
\end{cor}


\begin{thm}\label{thm:mvol_as_duals_to_mforests}
For all $F,G\in \indexedforests[m]$ we have $\langle \forestpoly[m]{G},V_F(\uplambda)\rangle_D=\delta_{F,G}$. 
Furthermore, the family of projected volume polynomials $\{P_nV_F(\uplambda)\}_{F\in\ltfor[m]{n}}$ in $\QQ[\lambda_1,\ldots,\lambda_n]$ is the graded $D$-dual basis to the homogeneous basis $\{\forestpoly[m]{F}\}_{F\in\ltfor[m]{n}}$ of $\QQ[x_1,\ldots,x_n]$.
\end{thm}

We are ready to determine a basis for $\hqsym[m]{n}$ in terms of volume polynomials.

\begin{thm}\label{thm:basis_mharmonics}
A $\QQ$-basis for $\hqsym[m]{n}$ is given by $
    \{V_F(\uplambda)\suchthat F\in \suppfor[m]{n}\}.
    $
Furthermore, $\hqsym[m]{n}$ is spanned by the derivatives of the homogeneous degree $\lfloor (n-1)/m\rfloor$ elements of $\hqsym[m]{n}$.
\end{thm}
The second half of the preceding result rests on the following generalization of~\Cref{prop:annihilator}.
\begin{prop}\label{prop:mannihilator}
Let $f\in \poly_n$ be homogeneous of degree $d<\lfloor (n-1)/m\rfloor$, and assume that $x_1f\in \qsymide[m]{n}$. Then we have $f\in \qsymide[m]{n}$.
\end{prop}

Let $\paths{\Bin{F}}$ denote the set of functions $\mathcal{P}:\internal{F}\to \{L,R\}$. 
Like before, we can encode $\mathcal{P}\in \paths{\widehat{F}}$ as a collection of vertex disjoint paths traveling up from the leaves of $\widehat{F}$ which cover every node in $\internal{F}$. 
For each $\mathcal{P}$, we let $d(\mathcal{P})\coloneqq (d_i)_{i\in \NN}\in \nvect$ where $d_i$ records the length of the path that has one endpoint at leaf $i$. 
It is easy to see that $d$ is injective, and for $\mathcal{P}$ the constant $L$-function we have $d(\mathcal{P})=\sfc(F)$.

%

Given $\sfc\in \nvect$ we define $\epsilon_F(\sfc)$ to equal $(-1)^{|\mathcal{P}^{-1}(R)|}$ if there exists $\mathcal{P}\in \paths{\Bin{F}}$ such that $d(\mathcal{P})=\sfc$, and $0$ otherwise.
With this notation in hand we have

\begin{prop}\label{prop:explicit_mvolume}
    For $F\in \indexedforests[m]$ we have $V_F(\uplambda)=\sum_{\sfc\in \nvect}\epsilon_F(\sfc) \,\frac{\uplambda^{\sfc}}{c!}.$
    Dually, for  $\sfc\in \nvect$ we have $\sfx^{\sfc}=\sum_{G\in \indexedforests[m]} \epsilon_G(c)\,\forestpoly[m]{G}.$
\end{prop}

\begin{thm}
For $F\in \indexedforests[m]$ we have
    $V_F(\uplambda)\in \QQ[\lambda_1-\lambda_{m+1},\lambda_2-\lambda_{m+2},\ldots]$. 
    The coefficients in 
    \begin{align*}V_F(\uplambda)=\sum_{\sfc =(c_1,c_2,\dots) \in \nvect} b_\sfc\,\prod_{i\ge 1}(\lambda_i-\lambda_{i+m})^{c_i}\end{align*}
    satisfy $b_\sfc=\frac{1}{\sfc!}\tope[m]{F}\left(\prod_{i\ge 1}(\forestpoly[m]{i})^{c_i}\right)\ge 0$.
\end{thm}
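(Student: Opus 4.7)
The plan has three steps: (i) show $V_F(\uplambda) \in \QQ[y_1, y_2, \ldots]$, where $y_i \coloneqq \lambda_i - \lambda_{i+m}$; (ii) derive the formula for $b_\sfc$ using the $D$-pairing adjunction of \Cref{prop:adjointness}; and (iii) deduce nonnegativity from iterated Monk's rule (\Cref{thm:monks}).

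For (i), I would show that $V_F$ is invariant under the translations $\lambda_j \mapsto \lambda_j + c_j$ in which $c_j$ depends only on $j \bmod m$. The combinatorial input is that for every $v \in \internal{F}$, its leftmost and rightmost leaf descendants $v_L, v_R$ satisfy $v_R - v_L \equiv 0 \pmod m$, because the subtree rooted at $v$ contains $m \cdot |\text{subtree}_v| + 1$ contiguous leaves. Hence $c_{v_L} = c_{v_R}$, so setting $c'(v) \coloneqq c_{v_L}$ the map $\phi \mapsto \phi + c'$ is a volume-preserving affine bijection $\cube_{F,\lambda} \to \cube_{F,\lambda+c}$, giving $V_F(\uplambda + c) = V_F(\uplambda)$. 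The subring of invariants of this $\QQ^m$-translation action on $\QQ[\lambda_1, \lambda_2, \dots]$ is generated by the $y_i$, which are (linearly, hence algebraically) independent; thus $V_F \in \QQ[y_1, y_2, \dots]$ and the coefficients $b_\sfc$ are well defined.

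For (ii), the central computation is that on $\QQ[y_1, y_2, \dots]$ the operator $\forestpoly[m]{\underline{i}}(\partial_\uplambda)$ coincides with $\partial_{y_i}$. Indeed, \Cref{thm:monks} gives $\forestpoly[m]{\underline{i}} = x_i + x_{i-m} + \cdots + x_{i \bmod m}$, and the chain rule gives $\partial_{\lambda_k} = \partial_{y_k} - \partial_{y_{k-m}}$ on $\QQ[y]$ (with the second term understood as $0$ when $k \le m$); the sum $\sum_{k \equiv i\,(m),\; 1 \le k \le i} \partial_{\lambda_k}$ then telescopes to $\partial_{y_i}$. Combining this with \Cref{prop:adjointness} and \Cref{cor:vol_via_vope} yields
\begin{equation*}
\sfc!\, b_\sfc \;=\; \Big(\prod_i \partial_{y_i}^{c_i}\Big) V_F(\uplambda)\,\Big|_{\uplambda=0} \;=\; \Big\langle \prod_i (\forestpoly[m]{\underline{i}})^{c_i},\, V_F(\uplambda) \Big\rangle_D \;=\; \ct\,\tope[m]{F}\Big(\prod_i (\forestpoly[m]{\underline{i}})^{c_i}\Big).
\end{equation*}
Homogeneity of $V_F$ forces $b_\sfc = 0$ unless $\sum_i c_i = |F|$, in which case $\tope[m]{F}(\prod_i (\forestpoly[m]{\underline{i}})^{c_i})$ is already a constant so the $\ct$ is vacuous, yielding the stated formula. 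For (iii), iterating \Cref{thm:monks} gives $\prod_i (\forestpoly[m]{\underline{i}})^{c_i} = \sum_G a_G \forestpoly{G}$ with $a_G \in \ZZ_{\ge 0}$; by \Cref{cor:TFG}, $\tope[m]{F}$ sends this to $\sum_{G \ge F} a_G \forestpoly{G/F}$, of which only $G = F$ contributes when $\sum_i c_i = |F|$, so $\sfc!\, b_\sfc = a_F \ge 0$.

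The main obstacle will be step (i): carefully verifying the volume-preserving translation argument and extracting the ring-theoretic statement that $V_F$ lies in $\QQ[y_1, y_2, \dots]$, which is standard invariant-theoretic bookkeeping once the polytope symmetry is in hand. Steps (ii) and (iii) are then formal consequences of the adjointness and Monk's rule.
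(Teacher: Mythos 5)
Your proof is correct, and it differs from the paper's in one substantive place, namely how you establish that $V_F(\uplambda)$ lies in $\QQ[\lambda_1 - \lambda_{m+1}, \lambda_2 - \lambda_{m+2}, \ldots]$. The paper does this inductively: since $V_F(\uplambda) = \vope[m]{F}(1)$ by \Cref{cor:vol_via_vope}, one only needs to check that each operator $\vope[m]{i}$ preserves the subring $\QQ[y_1, y_2, \ldots]$ (where $y_j = \lambda_j - \lambda_{j+m}$), which is a short algebraic verification. You instead give a geometric argument: the polytope $\cube_{F,\lambda}$ is carried to $\cube_{F,\lambda+c}$ by a volume-preserving translation whenever $c_j$ depends only on $j \bmod m$, because the relevant residue-class matching ($\rho_F(v_m) \equiv \rho_F(v_0) \equiv \rho_F(v) \pmod m$, which follows from \eqref{eqn:colvi}) lets you shift each coordinate $\phi(v)$ by $c_{\rho_F(v)}$. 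Both arguments are valid; the paper's is shorter and purely formal, while yours explains \emph{why} $V_F$ has this invariance, namely that it is a translation symmetry of the underlying polytope, which is arguably more illuminating. For step (ii), your telescoping identity $\forestpoly[m]{\underline{i}}(\partial_\uplambda)|_{\QQ[y]} = \partial_{y_i}$ is a clean conceptual derivation of the orthogonality $\langle \prod_i (\forestpoly[m]{\underline{i}})^{c_i}, \prod_i y_i^{d_i}\rangle_D = \delta_{\sfc,\sfd}\,\sfc!$, which the paper simply asserts as ``straightforward to check.'' Step (iii) matches the paper (which cites both \Cref{thm:forestmultpos} and \Cref{thm:monks}; you use the latter). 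One minor notational caution: in your step (i) you use $v_L, v_R$ for the leftmost/rightmost leaf descendants of the subtree rooted at $v$, whereas the paper uses $v_L = v_0$ and $v_R = v_m$ for the left/right children in $\Bin{F}$; the two usages are related but not identical, so it is worth saying explicitly that the constraint for internal node $v$ shifts consistently because $\rho_F(v)$, $\rho_F(v_0)$, and $\rho_F(v_m)$ all lie in the same residue class mod $m$.
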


\subsection{A quasisymmetric nil-Hecke algebra}

For the same reason as in Section~\ref{sec:quasisymmetric_nil_hecke}, we have to consider $\Hom_{\qsym[m]{n}}(\poly_n,\poly_{n-k})$ where $\poly_{n-k}$ is considered as a $\qsym[m]{n}$-module by the map $\rope{1}^k|_{\qsym[m]{n}}:\qsym[m]{n}\to \qsym[m]{n-k}$, which is well-defined directly from the definition of $\qsym[m]{n}$. We note that if $k$ is a multiple of $m$ then $\rope{1}^k|_{\qsym[m]{n}}=\rope{n-k+1}^k|_{\qsym[m]{n}}$ by \Cref{thm:Rqsymchar} which is the map setting $x_n=\cdots = x_{n-k+1}=0$.
\begin{thm}
We have
    \begin{align*}\Hom_{\qsym[m]{n}}(\poly_n,\poly_{n-k})=\bigoplus_{H\in \suppfor[m]{n}\text{ with }m|H|\le k} \poly_{n-k}\rope{1}^{k-m|H|}\,\tope[m]{H}.\end{align*}
\end{thm}

Like before, the limiting object 
\begin{align*}\bigoplus_k\lim_{n\to \infty}\Hom_{\qsym[m]{n}}(\poly_{n},\poly_{n-k})=\bigoplus_{F\in \indexedforests[m]}\poly \rope{1}^a\,\tope[m]{F}\end{align*}
is the subalgebra of $\End(\poly)$ generated by all $x_i$, $\rope{1}$, and $\tope[m]{i}$.

\begin{thm}
\label{thm:mquasisymnilhecke}
    The algebra in $\End(\poly)$ generated by $\rope{1}$, $\tope[m]{i}$ and $x_i$ has relations generated by
    \begin{enumerate}[label=(\roman*)]
        \item (Comm.) $\tope[m]{i}\rope{1}=\rope{1}\tope[m]{i+1}$, for $i\ge 1$, $\rope{1}x_i=x_{i-1}\rope{1}$ for $i>1$, $x_ix_j=x_jx_i$ for all $i,j$\\$\tope[m]{i}x_j=x_{j}\tope[m]{i}$ if $j<i$ and $\tope[m]{i}x_j=x_{j-m}\tope[m]{i}$ if $j>i+m$\\
        $\tope[m]{i}\tope[m]{j}=\tope[m]{j}\tope[m]{i+m}$ for $i>j$,
        \item  $\rope{1}x_1=0$ and $\tope[m]{i}x_{i+j}=0$ for $1\le j \le m-1$
        \item $\tope[m]{i}x_i=\rope{1}^m+x_1\tope[m]{1}+\cdots + x_i\tope[m]{i}$ and $\tope[m]{i}x_{i+m}=-(\rope{1}^m+x_1\tope[m]{1}+\cdots + x_{i-1}\tope[m]{i-1})$
    \end{enumerate}
\end{thm}

\section{Proof of \Cref{thm:topemtrims}}
\label{sec:ProofThatTrimsWork}

We give a combinatorial proof of Theorem~\ref{thm:topemtrims}. 
An algebraic proof may be found in \cite{NST_2}.

 For any $k$, let
 $C_k(F)=\{\kappa\in \compatible{F}\suchthat k,k+1,\ldots,k+(m-1)\not\in \im{\kappa}\},$ and define $\Phi_k:\NN\setminus \{k,\ldots,k+(m-1)\}\to \NN$
 and its inverse $\Phi_k^{-1}:\NN\to \{k,\ldots,k+(m-1)\}$ by \begin{align*}\Phi_k(a)\coloneqq \begin{cases}
 a&\text{if }a\le k-1\\a-m&\text{if }a\ge k+m\end{cases}\text{ and }\Phi_{k}^{-1}(a)=\begin{cases}a&\text{if }a\le k-1\\a+m&\text{if }a\ge k.\end{cases}\end{align*}
 Then we have \begin{equation}
     \rope{k}^{m}(\oneforestpoly{F})=\rope{k}^m(\sum_{\kappa\in \compatible{F}}\prod_{v\in \internal{F}}x_{\kappa(v)})=\sum_{\kappa\in C_{k}(F)}\prod_{v\in \internal{F}}x_{\Phi_k\kappa(v)}.
 \end{equation}

 Consider the map $f:\NN\setminus \{i,\ldots,i+(m-1)\}\to \NN\setminus \{i+1,\ldots,i+m\}$  and its inverse $f^{-1}:\NN\setminus \{i+1,\ldots,i+m\}\to \NN\setminus \{i,\ldots,i+(m-1)\}$ by \begin{align*}f(a)=\begin{cases}a-m&\text{if }a=i+m\\ a&\text{otherwise}\end{cases}\text{ and }f^{-1}(a)=\begin{cases}a+m&\text{if }a=i\\a&\text{otherwise.}\end{cases}\end{align*}
 We will use the following fact often to show that various compatible labellings retain the compatibility inequalities between internal children after being modified by one of the above functions.
 \begin{clm}
 \label{clm:4functions}
     For $g$ being any of the functions $\Phi_{k},\Phi_{k}^{-1}$, $f$, $f^{-1}$, the following holds: if $a,b$ are in the domain of $g$ and $0\le j \le m$ is such that $b\le a-j$ and $b\equiv a-j$ mod $m$, then
     \begin{align*}g(a)-g(b)\ge j.\end{align*}
 \end{clm}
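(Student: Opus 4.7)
The plan is to verify the inequality $g(a) - g(b) \ge j$ by a routine case analysis for each of the four functions. The key observation is that each of $\Phi_k,\Phi_k^{-1},f,f^{-1}$ acts as the identity on one portion of its domain and as a translation by $\pm m$ on the other portion; moreover, whenever the function jumps (on $\Phi_k$ or $\Phi_k^{-1}$) the ``gap'' in the domain has width $m$, and for $f,f^{-1}$ the lone non-identity point is separated from the identity region by a gap of width $m$. I will exploit this uniform structure.

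For each $g$, I would split into four subcases depending on which side of the jump point $a$ and $b$ lie. When $a$ and $b$ are on the same side, $g$ acts by a common affine shift and $g(a)-g(b)=a-b\ge j$ by hypothesis. When $b$ lies above $a$ in the domain, the inequality $b\le a-j\le a$ forces an immediate contradiction. The only genuinely interesting cases are the ``crossing'' ones, where $g$ shifts one of $a,b$ by $\pm m$ but not the other.

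In each such crossing case, the hypothesis $b$ in the lower region and $a$ in the upper region, combined with the gap in the domain, yields a strict separation $a-b\ge m+1$ (for $\Phi_k$ and the nontrivial crossings of $f,f^{-1}$). Combining this with the congruence $a-b\equiv j \pmod m$ forces
\[
a - b \ge \begin{cases} m+j & \text{if } 1\le j \le m-1, \\ 2m & \text{if } j\in\{0,m\},\end{cases}
\]
since these are the smallest values $\ge m+1$ in the appropriate residue class mod $m$. In both situations $a-b\ge m+j$, and then $g(a)-g(b)=(a-b)-m\ge j$ (for the $-m$ shift cases $\Phi_k$ and $f$), resp.\ $g(a)-g(b)=(a-b)+m\ge j+2m\ge j$ (for the $+m$ shift cases $\Phi_k^{-1}$ and $f^{-1}$, where in fact one does not even need the mod-$m$ refinement since $a-b\ge 0$ already suffices).

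The main obstacle is simply the bookkeeping across four functions and their subcases, but no single step is delicate: the mechanism is always the same, namely that the width-$m$ gap in the domain supplies exactly the extra room in $a-b$ needed to absorb the $\pm m$ shift while respecting the congruence condition.
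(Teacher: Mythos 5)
Your case-analysis plan is sound in substance and does prove the claim, but note one bookkeeping slip: you sort the crossing cases by function ($\Phi_k, f$ as ``$-m$ shifts'', $\Phi_k^{-1}, f^{-1}$ as ``$+m$ shifts''), whereas for $f$ and $f^{-1}$ the sign of the net shift depends on whether the lone special point plays the role of $a$ or of $b$. Concretely, for $f^{-1}$ with $b=i$ (the shifted point) and $a\ge i+m+1$ one has $g(a)-g(b)=(a-b)-m$, not $(a-b)+m$, so your parenthetical remark that for $f^{-1}$ ``one does not even need the mod-$m$ refinement'' is false for that configuration; symmetrically, for $f$ with $b=i+m$ and $a>i+m$ the net shift is $+m$. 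This is harmless: your own mechanism (gap of width $m$ gives $a-b\ge m+1$, which together with $a-b\equiv j \bmod m$ forces $a-b\ge m+j$, hence $(a-b)-m\ge j$) covers the missed $-m$ configuration verbatim, so the proof is repaired just by regrouping the cases by configuration rather than by function.

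The paper's proof takes a genuinely different, and shorter, route that avoids the case analysis altogether: it observes that each of $\Phi_k,\Phi_k^{-1},f,f^{-1}$ is an increasing injection on its domain that preserves residues mod $m$. Monotonicity gives $g(a)\ge g(b)$, residue preservation gives $g(a)-g(b)\equiv j \bmod m$, and these two facts already force $g(a)-g(b)\ge j$ when $0\le j\le m-1$; the only remaining case $j=m$ is settled by injectivity, since $b\le a-m<a$ rules out $g(a)=g(b)$. Your approach buys nothing extra here but is more hands-on; the paper's buys uniformity (one argument for all four functions, and indeed for any increasing residue-preserving injection), which is why it phrases the hypothesis that way.
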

 \begin{proof}
In all cases $g$ is the unique increasing bijection from $\NN\setminus A$ to $\NN\setminus B$ for some finite sets $A,B$. It also satisfies $g(x)\equiv x$ mod $m$ for all $x\notin A$.
Thus $b\le a-j\leq a$ implies $g(a)\ge g(b)$, while $b\equiv a-j$ mod $m$ implies $g(a)-g(b)\equiv j$ mod $m$. From there the conclusion follows immediately in all cases but one: if $j=m$, then we must forbid $g(a)=g(b)$, and indeed this cannot hold since $b\leq a-m<a$ and $g$ is a bijection.
 \end{proof}
 We claim that for any $\kappa\in C_i(F)$ we have $f\kappa\in C_{i+1}(F)$. 
 Since $i+1,\ldots,i+m\not\in \im{f(\kappa)}$ it remains to check that $f\in \compatible{F}$. Let $v\in \internal{F}$. Then $f(\kappa(v))\le \kappa(v)\le \rho_F(v)$, $f(\kappa(v))\equiv \kappa(v)\equiv \col(v)$, and by \Cref{clm:4functions} for $v,v_j\in \internal{F}$ we have \begin{align*} f(\kappa(v_j))-f(\kappa(v))\ge j.\end{align*}

 Additionally, the map $f^*:\kappa \mapsto f\kappa$ is injective as $f$ is injective, and $\Phi_i\kappa=\Phi_{i+1}f\kappa$. It follows that
 \begin{equation}
 \label{eq:TiforestFcombi}
     \tope[m]{i}\oneforestpoly{F}=\frac{\rope{i+1}^m\oneforestpoly{F}-\rope{i}^m\oneforestpoly{F}}{x_i}=\sum_{\kappa'\in C_{i+1}(F)\setminus \im{f^*}}\frac{1}{x_i}\prod_{v\in \internal{F}}x_{\Phi_{i+1}\kappa'(v)}.
 \end{equation}

 \begin{clm}
 Let $\kappa'\in C_{i+1}(F)$. Then $\kappa'\not\in C_{i+1}(F)\setminus \im{f^*}$ if and only if $i\in \qdes{F}$ and the terminal node $u$ with $\rho_F(u)=i$ has $\kappa'(u)=i$.
 \end{clm}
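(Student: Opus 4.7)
The plan is to analyze the condition by identifying the unique candidate preimage of $\kappa'$ under $f^*$. Given $\kappa'\in C_{i+1}(F)$, the condition $\kappa'\not\in C_{i+1}(F)\setminus \im{f^*}$ simplifies to $\kappa'\in \im{f^*}$, and by injectivity of $f^*$ this is equivalent to the candidate $\kappa\coloneqq f^{-1}\circ\kappa'$ lying in $C_i(F)$. Here $\kappa$ is well-defined with values in $\NN\setminus\{i,\ldots,i+m-1\}$ since $\kappa'$ avoids $\{i+1,\ldots,i+m\}$; concretely, $\kappa(v)=\kappa'(v)$ whenever $\kappa'(v)\ne i$, and $\kappa(v)=i+m$ whenever $\kappa'(v)=i$.

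First I would verify that $\kappa$ automatically satisfies three of the four defining conditions of $C_i(F)\subset \compatible{F}$: the image-avoidance $\{i,\ldots,i+m-1\}\cap\im\kappa=\emptyset$ by construction of $f^{-1}$; the mod-$m$ congruence $\kappa(v)\equiv\rho_F(v)\pmod m$ since $f^{-1}$ preserves mod-$m$ classes; and the compatibility inequalities $\kappa(v_j)-j\ge \kappa(v)$ between internal children $v,v_j\in\internal{F}$, via \Cref{clm:4functions} applied to $f^{-1}$ on the corresponding inequalities for $\kappa'$. Thus the only potentially failing condition is the flag bound $\kappa(v)\le\rho_F(v)$. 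This fails exactly when $\kappa'(v)=i$ (so $\kappa(v)=i+m$) together with $\rho_F(v)<i+m$; combined with $\kappa'(v)=i\le\rho_F(v)$ and $\rho_F(v)\equiv i\pmod m$, this pins down $\rho_F(v)=i$. Consequently, $\kappa\in C_i(F)$ iff no $v\in\internal{F}$ has $\kappa'(v)=\rho_F(v)=i$, so membership of $\kappa'$ in $\im{f^*}$ is governed purely by the (non)existence of such a ``bad'' internal node.

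The main obstacle is the descent argument that translates this existence condition into the desired structural condition involving $\qdes{F}$ and $\kappa'(u)$. The planned argument proceeds as follows: given any $v\in\internal{F}$ with $\kappa'(v)=\rho_F(v)=i$, if its leftmost child $v_0$ is internal then $\rho_F(v_0)=\rho_F(v)=i$ and the inequalities $\kappa'(v)\le \kappa'(v_0)\le\rho_F(v_0)=i$ force $\kappa'(v_0)=i$; iterating along leftmost children, we reach a node $v^*$ whose leftmost child is a leaf, necessarily leaf $i$. Next I would show by induction on $j$ that each $v^*_j$ for $1\le j\le m$ must also be a leaf: if $v^*_0,\ldots,v^*_{j-1}$ are all leaves (labeled $i,\ldots,i+j-1$) then $\rho_F(v^*_j)=i+j$, while if $v^*_j$ were internal, the congruence $\kappa'(v^*_j)\equiv i+j\pmod m$ combined with $\kappa'(v^*_j)\ge \kappa'(v^*)+j=i+j$ and the exclusion $\kappa'(v^*_j)\notin\{i+1,\ldots,i+m\}$ (from $\kappa'\in C_{i+1}(F)$) would force $\kappa'(v^*_j)\ge i+j+m$, contradicting $\kappa'(v^*_j)\le\rho_F(v^*_j)=i+j$. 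Hence every $v^*_j$ is a leaf, so $v^*$ is terminal with $\rho_F(v^*)=i$, placing $i\in\qdes{F}$ and identifying $v^*=u$ with $\kappa'(u)=i$. Conversely, when the structural condition holds, taking $v=u$ provides a witness. Combining with the previous paragraph yields the biconditional of the claim, with the hardest part being the uniform bookkeeping of congruences and the forbidden interval $\{i+1,\ldots,i+m\}$ that forces the leaf conclusion at each step of the induction.
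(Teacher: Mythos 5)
Your argument is correct and follows essentially the same route as the paper: reduce membership in $\im{f^*}$ to whether $f^{-1}\kappa'\in C_i(F)$, use \Cref{clm:4functions} to see that only the flag bound $\kappa(v)\le \rho_F(v)$ can fail and that it fails exactly at a node with $\kappa'(v)=\rho_F(v)=i$, then descend along leftmost children to a terminal node $u$ with $\rho_F(u)=\kappa'(u)=i$ (the congruence mod $m$ together with the avoidance of $\{i+1,\dots,i+m\}$ forcing all other children to be leaves), the converse direction being witnessed by $u$ itself. One bookkeeping caveat: as printed the claim says ``$\kappa'\not\in C_{i+1}(F)\setminus\im{f^*}$ if and only if \dots'', but what you prove --- and what the paper's own proof and its subsequent use, namely $C_{i+1}(F)\setminus\im{f^*}=\{\kappa'\in C_{i+1}(F)\suchthat \kappa'(u)=i\}$, actually establish --- is the version with the membership unnegated: $\kappa'\in C_{i+1}(F)\setminus\im{f^*}$ iff $i\in\qdes{F}$ and $\kappa'(u)=i$. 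Your first-paragraph reduction $\kappa'\not\in C_{i+1}(F)\setminus\im{f^*}\Longleftrightarrow f^{-1}\kappa'\in C_i(F)$ is the correct one (the corresponding line in the paper carries the same flipped negation as its claim statement), so your closing assertion that you obtain ``the biconditional of the claim'' should be read against this corrected statement rather than the literal printed one.
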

 \begin{proof}
 We have $\kappa'\not \in C_{i+1}(F)\setminus \im{f^*}$ if and only if $f^{-1}\kappa'\not \in C_i(F)$. Note that if $i\in \qdes{F}$ and the terminal node $u$ with $\rho_F(u)=i$ has $\kappa'(u)=i$ then $f^{-1}\kappa'(u)=i+m>\rho_F(u)$ and so $f^{-1}\kappa'(u)\not \in \compatible{F}\supset C_i(F)$. Therefore it suffices to show that $f^{-1}\kappa'\not \in C_i(F)$ implies there is a terminal node $u$ with $\rho_F(u)=i$ and $\kappa'(u)=i$.

 We first show that $f^{-1}\kappa'\not \in C_i(F)$ implies there is some $v\in \internal{F}$ with $\kappa'(v)=i$ and $\rho_F(v)<i+m$. We do this by checking that all other conditions besides $f^{-1}\kappa'(v)\le \rho_F(v)$ for $f^{-1}\kappa'(v)$ to lie in $C_i(F)$ are satisfied. Note that $i,\ldots,i+m\not\in \im{f^{-1}\kappa'}$, for $v\in \internal{F}$ we have $f^{-1}(\kappa(v))\equiv \kappa(v)\equiv \col(v)$, and by \Cref{clm:4functions} we have for $v,v_j\in \internal{F}$ that
 \begin{align*}f^{-1}(\kappa'(v_j))-f^{-1}(\kappa'(v))\ge j.\end{align*}

 Therefore as all other conditions for $f^{-1}\kappa'\in C_i(F)$ are met, we have $f^{-1}\kappa'\not\in C_i(F)$ exactly if there is $v\in \internal{F}$ with $f^{-1}\kappa'(v)>\rho_F(v)$. 
 Because $f^{-1}\kappa'(v)=\kappa'(v)\le \rho_F(v)$ if $\kappa'(v)\ne i$, the inequality $f^{-1}\kappa'(v)>\rho_F(v)$ happens precisely if $\kappa'(v)=i$ and $f^{-1}\kappa'(v)=i+m>\rho_F(v)$.

 Now from this $v$ with $\kappa'(v)=i$ and $\rho_F(v)<i+m$, we construct the desired $u$. We have $i=\kappa'(v)\le \kappa'(v_0)\le \kappa'(v_{0^2})\le \cdots \le \kappa'(v_{0^k})\le  \rho_F(v)<i+m$ where $v_{0^k}\in \internal{F}$ is the last internal left descendant of $v$. Because $\rho_F(v)\equiv \kappa'(v)\equiv i$ mod $m$ we must have $\rho_F(v)=i$ and so additionally $\kappa'(v_{0^k})=i$. Therefore $u=v_{0^k}$ has $\kappa'(u)=\rho_F(u)=u_0=i$.

 We claim that $u$ is terminal. If not, let $1\le j\le m$ be the first index with $u_j\in \internal{F}$. Then \begin{align*}i+j=\kappa'(u)+j\le \kappa'(u_j)\le \rho_F(u_j)=i+j,\end{align*} so $\kappa'(u_j)=i+j$, contradicting that $\kappa'\in C_{i+1}(F)$.

 Therefore $u$ is terminal with $\kappa'(u)=\rho_F(u)=i$ and in particular $i\in \qdes{F}$.
 \end{proof}
 Returning to the proof of \Cref{thm:topemtrims}, we may now conclude that if $i\not\in \qdes{F}$ then $C_{i+1}(F)\setminus \im{f^*}=\emptyset$ and so by~\Cref{eq:TiforestFcombi} we have$\tope[m]{i}\oneforestpoly{F}=0$. On the other hand, suppose $i\in \qdes{F}$ and let $u$ be the terminal node with $\rho_F(u)=i$. Then by the claim we know that \begin{align*}C_{i+1}(F)\setminus \im{f^*}=\{\kappa'\in C_{i+1}(F):\kappa'(u)=i\}.\end{align*}
 \begin{clm}
     If $i\in \qdes{F}$ then there is a bijection $\{\kappa'\in C_{i+1}(F):\kappa'(u)=i\}\to \compatible{F/i}$
     given by $\kappa' \mapsto \kappa''=\Phi_{i+1}\kappa'|_{\internal{F}\setminus u}$ (identifying $\internal{F}\setminus u=\internal{F/i}$).
 \end{clm}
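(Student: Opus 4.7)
The strategy is to exhibit an inverse explicitly and check the three defining conditions for $\compatible{F/i}$ and for $C_{i+1}(F)$ in turn, using only the bijectivity of $\Phi_{i+1}$ together with the colour/order preservation afforded by \Cref{clm:4functions}.

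The first preliminary step is to describe $\rho_{F/i}$ in terms of $\rho_F$. I would argue that for $v\in \internal{F}\setminus\{u\}$ the value $\rho_F(v)$ never lies in $\{i+1,\dots,i+m\}$: these leaves are precisely $u_1,\dots,u_m$, i.e.\ non-leftmost children of $u$, so they are never reached by following left edges from a node other than $u$ itself. Combined with the relabelling of leaves performed when trimming $u$, this yields the identity $\rho_{F/i}(v)=\Phi_{i+1}(\rho_F(v))$ for all $v\in \internal{F/i}$ (identifying $\internal{F/i}=\internal{F}\setminus\{u\}$).

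Forward direction: given $\kappa'\in C_{i+1}(F)$ with $\kappa'(u)=i$, I would verify that $\kappa''\coloneqq \Phi_{i+1}\kappa'|_{\internal{F}\setminus u}$ lies in $\compatible{F/i}$. The flag bound $\kappa''(v)\le \rho_{F/i}(v)$ follows from $\kappa'(v)\le \rho_F(v)$ together with the monotonicity of $\Phi_{i+1}$ on its domain $\NN\setminus\{i+1,\dots,i+m\}$ (which contains both $\kappa'(v)$ and $\rho_F(v)$ by the above observation). The congruence $\kappa''(v)\equiv \rho_{F/i}(v)\pmod m$ is immediate since $\Phi_{i+1}$ is the identity modulo $m$. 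The internal-child inequality $\kappa''(v)\le \kappa''(v_j)-j$ for $v_j\in\internal{F/i}$ follows from the corresponding inequality in $\compatible{F}$ together with \Cref{clm:4functions} applied to $g=\Phi_{i+1}$, $a=\kappa'(v_j)$, $b=\kappa'(v)$.

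Inverse direction: given $\kappa''\in\compatible{F/i}$, define $\kappa'(v)=\Phi_{i+1}^{-1}\kappa''(v)$ for $v\ne u$ and $\kappa'(u)=i$. I claim $\kappa'\in C_{i+1}(F)$ with $\kappa'(u)=i$. The condition $\im\kappa'\cap\{i+1,\dots,i+m\}=\emptyset$ holds by construction, and the flag bound, congruence, and internal-child inequalities away from $u$ go through exactly as before with $\Phi_{i+1}^{-1}$ in place of $\Phi_{i+1}$ via \Cref{clm:4functions}. The two maps are mutually inverse because $\Phi_{i+1}\circ\Phi_{i+1}^{-1}=\id$ and $\Phi_{i+1}^{-1}\circ\Phi_{i+1}=\id$ on the relevant domains.

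The main obstacle is the one new condition: when $u=v_j$ is an internal child of some $v\in\internal{F}$ (where $j\ge 0$), the compatibility requirement in $F$ forces $\kappa'(v)\le \kappa'(u)-j=i-j$, and this is not part of the data of $\kappa''\in\compatible{F/i}$ since $u$ is a leaf in $F/i$. I would handle this with the structural observation that $\rho_F(v)\le i-j$ holds automatically: the subtrees of $v_0,\dots,v_{j-1}$ each contain at least one leaf and all their leaves lie in the interval $[\rho_F(v),i-1]$, so that interval has length at least $j$. (When $j=0$ we instead have $\rho_F(v)=\rho_F(u)=i$.) Consequently $\kappa'(v)\le \rho_F(v)\le i-j$, and the remaining compatibility condition in $F$ is automatic. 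This completes the verification that the stated map is a bijection.
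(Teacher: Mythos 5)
Your proposal is correct and follows essentially the same route as the paper: exhibit the explicit inverse, reduce most conditions to \Cref{clm:4functions}, and isolate the one genuinely new constraint at the parent of $u$. The only cosmetic differences are in two subclaims — you establish $\rho_{F/i}(v)=\Phi_{i+1}(\rho_F(v))$ by observing that $\rho_F$ never hits $\{i+1,\dots,i+m\}$ on $\internal{F}\setminus\{u\}$ (the paper instead invokes the monoid factorization $F=(F/i)\cdot i$), and you derive $\rho_F(v)\le i-j$ from a direct subtree-counting argument (the paper gets it from $\rho_F(v)\le\rho_F(v_j)$ plus the mod-$m$ congruence of flag values); both are equivalent observations.
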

 \begin{proof}
Before starting we show that for $v\in \internal{F}\setminus u$ we have $\Phi_{i+1}\rho_F(v)=\rho_{F/i}(v)$ (which explains the presence of $\Phi_{i+1}$ in the statement). This is because by definition of the monoid structure on $\indexedforests[m]$, for any $F\ge G$ we have $\rho_{F/G}(v)=\rho_G(w)$ for $w$ the root of the $\rho_{F}(v)$'th tree of $G\in \indexedforests[m]$. 
Taking $G=\underline{i}$ we directly see that $\rho_G(w)=\Phi_{i+1}(\rho_F(v))$.

 First we check that the map from the claim is well-defined. Let $v\in \internal{F/i}=\internal{F}\setminus u$. Then $\Phi_{i+1}\kappa'(v)\le \Phi_{i+1}\rho_F(v)=\rho_{F/i}(v)$, $\Phi_{i+1}\kappa'(v)\equiv \kappa'(v)\equiv \col(v)\equiv \overline{\rho}_{F/i}(v)$ mod $m$, and by \Cref{clm:4functions} we have for $v,v_j\in \internal{F/i}$ that \begin{align*}\Phi_{i+1}\kappa'(v_j)-\Phi_{i+1}\kappa'(v)\ge j.\end{align*}

 This map is clearly injective, so it remains to check surjectivity. Given $\kappa''\in \compatible{F/i}$ we claim that $\kappa'\in \compatible{F}$ where \begin{align*}\kappa'(v)=\begin{cases}\Phi_{i+1}^{-1}\kappa''(v)&\text{if }v\ne u\\i&\text{if }v=u.\end{cases}\end{align*}
 If this is the case then it is readily apparent that $\Phi_{i+1}\kappa'|_{\internal{F}\setminus u}=\kappa''$ so surjectivity will follow.

 Let $v\in \internal{F}$. If $v=u$ then we have $\kappa'(v)=i=\rho_F(v)$ which shows $\kappa'(v)\le \rho_F(v)$ and $\kappa'(v)\equiv \col(v)$ mod $m$. If $v\ne u$ then $\kappa'(v)=\Phi_{i+1}^{-1}\kappa''(v)\le \Phi_{i+1}^{-1}\rho_{F/i}(v)=\rho_F(v)$ and $\kappa'(v)=\Phi_{i+1}^{-1}\kappa''(v)\equiv \kappa'(v)\equiv \rho_{F/i}(v)\equiv \rho_F(v)$. Finally, if $v,v_j\in \internal{F}$ it remains to show that \begin{align*}\kappa'(v_j)-\kappa'(v)\ge j.\end{align*} If $v_j=u$ then $\kappa'(v_j)-\kappa'(v)=i-\Phi_{i+1}^{-1}\kappa''(v)\ge i-\Phi_{i+1}^{-1}\rho_{F/i}(v)=i-\rho_F(v)\ge i-(\rho_F(v_j)-j)=j$ (where the last inequality is because $\rho_F(v)\le \rho_F(v_j)$ and $\col(v)\equiv \col(v_j)-j$ mod $m$). If $v_j\ne u$ then if follows from \Cref{clm:4functions} that $\Phi_{i+1}^{-1}\kappa''(v_j)-\Phi_{i+1}^{-1}\kappa''(v)\ge j$.
 \end{proof}
 Given this claim, we now conclude
 \begin{align*}
 \oneforestpoly{F/i}=\sum_{\kappa''\in \compatible{F/i}}\prod_{v\in \internal{F/i}}x_{\kappa''(v)}=&\sum_{\kappa'\in C_{i+1}(F)\setminus \im{f^*}}\prod_{v\in \internal{F/i}}x_{\Phi_{i+1}\kappa'(v)}\\=&\sum_{\kappa'\in C_{i+1}(F)\setminus \im{f^*}}\frac{1}{x_i}\prod_{v\in \internal{F}}x_{\Phi_{i+1}\kappa'(v)}=\tope[m]{i}\oneforestpoly{F}.
 \end{align*}
 where in the second last equality we used that $\Phi_{i+1}\kappa'(u)=\Phi_{i+1}(i)=i$.

\pagebreak

\section{Table of $\oneforestpoly{F}$ for all $F\in \suppfor{5}$}
\label{sec:table}
\begin{center}
\scriptsize
\begin{longtable}{|c|l|} 
\hline
$c(F)$ & $\oneforestpoly{F}$  \\
\hline
$ (0, 0, 0, 0, 0) $ & $ 1 $\\ 
$ (1, 0, 0, 0, 0) $ & $ x_{1} $\\ 
$ (0, 1, 0, 0, 0) $ & $ x_{1} + x_{2} $\\ 
$ (0, 0, 1, 0, 0) $ & $ x_{1} + x_{2} + x_{3} $\\ 
$ (0, 0, 0, 1, 0) $ & $ x_{1} + x_{2} + x_{3} + x_{4} $\\ 
$ (2, 0, 0, 0, 0) $ & $ x_{1}^{2} $\\ 
$ (1, 1, 0, 0, 0) $ & $ x_{1} x_{2} $\\ 
$ (1, 0, 1, 0, 0) $ & $ x_{1}^{2} + x_{1} x_{2} + x_{1} x_{3} $\\ 
$ (1, 0, 0, 1, 0) $ & $ x_{1}^{2} + x_{1} x_{2} + x_{1} x_{3} + x_{1} x_{4} $\\ 
$ (0, 2, 0, 0, 0) $ & $ x_{1}^{2} + x_{1} x_{2} + x_{2}^{2} $\\ 
$ (0, 1, 1, 0, 0) $ & $ x_{1} x_{2} + x_{1} x_{3} + x_{2} x_{3} $\\ 
$ (0, 1, 0, 1, 0) $ & $ x_{1}^{2} + 2 x_{1} x_{2} + x_{2}^{2} + x_{1} x_{3} + x_{2} x_{3} + x_{1} x_{4} + x_{2} x_{4} $\\ 
$ (0, 0, 2, 0, 0) $ & $ x_{1}^{2} + x_{1} x_{2} + x_{2}^{2} + x_{1} x_{3} + x_{2} x_{3} + x_{3}^{2} $\\ 
$ (0, 0, 1, 1, 0) $ & $ x_{1} x_{2} + x_{1} x_{3} + x_{2} x_{3} + x_{1} x_{4} + x_{2} x_{4} + x_{3} x_{4} $\\ 
$ (3, 0, 0, 0, 0) $ & $ x_{1}^{3} $\\ 
$ (2, 1, 0, 0, 0) $ & $ x_{1}^{2} x_{2} $\\ 
$ (2, 0, 1, 0, 0) $ & $ x_{1}^{2} x_{2} + x_{1}^{2} x_{3} $\\ 
$ (2, 0, 0, 1, 0) $ & $ x_{1}^{3} + x_{1}^{2} x_{2} + x_{1}^{2} x_{3} + x_{1}^{2} x_{4} $\\ 
$ (1, 2, 0, 0, 0) $ & $ x_{1} x_{2}^{2} $\\ 
$ (1, 1, 1, 0, 0) $ & $ x_{1} x_{2} x_{3} $\\ 
$ (1, 1, 0, 1, 0) $ & $ x_{1}^{2} x_{2} + x_{1} x_{2}^{2} + x_{1} x_{2} x_{3} + x_{1} x_{2} x_{4} $\\ 
$ (1, 0, 2, 0, 0) $ & $ x_{1}^{3} + x_{1}^{2} x_{2} + x_{1} x_{2}^{2} + x_{1}^{2} x_{3} + x_{1} x_{2} x_{3} + x_{1} x_{3}^{2} $\\ 
$ (1, 0, 1, 1, 0) $ & $ x_{1}^{2} x_{2} + x_{1}^{2} x_{3} + x_{1} x_{2} x_{3} + x_{1}^{2} x_{4} + x_{1} x_{2} x_{4} + x_{1} x_{3} x_{4} $\\ 
$ (0, 3, 0, 0, 0) $ & $ x_{1}^{3} + x_{1}^{2} x_{2} + x_{1} x_{2}^{2} + x_{2}^{3} $\\ 
$ (0, 2, 1, 0, 0) $ & $ x_{1}^{2} x_{2} + x_{1}^{2} x_{3} + x_{1} x_{2} x_{3} + x_{2}^{2} x_{3} $\\ 
$ (0, 2, 0, 1, 0) $ & $ x_{1}^{2} x_{2} + x_{1} x_{2}^{2} + x_{1}^{2} x_{3} + x_{1} x_{2} x_{3} + x_{2}^{2} x_{3} + x_{1}^{2} x_{4} + x_{1} x_{2} x_{4} + x_{2}^{2} x_{4} $\\ 
$ (0, 1, 2, 0, 0) $ & $ x_{1} x_{2}^{2} + x_{1} x_{2} x_{3} + x_{1} x_{3}^{2} + x_{2} x_{3}^{2} $\\ 
$ (0, 1, 1, 1, 0) $ & $ x_{1} x_{2} x_{3} + x_{1} x_{2} x_{4} + x_{1} x_{3} x_{4} + x_{2} x_{3} x_{4} $\\ 
$ (4, 0, 0, 0, 0) $ & $ x_{1}^{4} $\\ 
$ (3, 1, 0, 0, 0) $ & $ x_{1}^{3} x_{2} $\\ 
$ (3, 0, 1, 0, 0) $ & $ x_{1}^{3} x_{2} + x_{1}^{3} x_{3} $\\ 
$ (3, 0, 0, 1, 0) $ & $ x_{1}^{3} x_{2} + x_{1}^{3} x_{3} + x_{1}^{3} x_{4} $\\ 
$ (2, 2, 0, 0, 0) $ & $ x_{1}^{2} x_{2}^{2} $\\ 
$ (2, 1, 1, 0, 0) $ & $ x_{1}^{2} x_{2} x_{3} $\\ 
$ (2, 1, 0, 1, 0) $ & $ x_{1}^{2} x_{2}^{2} + x_{1}^{2} x_{2} x_{3} + x_{1}^{2} x_{2} x_{4} $\\ 
$ (2, 0, 2, 0, 0) $ & $ x_{1}^{2} x_{2}^{2} + x_{1}^{2} x_{2} x_{3} + x_{1}^{2} x_{3}^{2} $\\ 
$ (2, 0, 1, 1, 0) $ & $ x_{1}^{2} x_{2} x_{3} + x_{1}^{2} x_{2} x_{4} + x_{1}^{2} x_{3} x_{4} $\\ 
$ (1, 3, 0, 0, 0) $ & $ x_{1} x_{2}^{3} $\\ 
$ (1, 2, 1, 0, 0) $ & $ x_{1} x_{2}^{2} x_{3} $\\ 
$ (1, 2, 0, 1, 0) $ & $ x_{1} x_{2}^{2} x_{3} + x_{1} x_{2}^{2} x_{4} $\\ 
$ (1, 1, 2, 0, 0) $ & $ x_{1} x_{2} x_{3}^{2} $\\ 
$ (1, 1, 1, 1, 0) $ & $ x_{1} x_{2} x_{3} x_{4} $\\ 
\hline
\end{longtable}
\end{center}

\bibliographystyle{hplain}
\bibliography{main}

\end{document}